\numberwithin{equation}{section} 
\def\res{\mathop{\mathrm{Res}}\limits}
\def\be{\begin{equation}}
\def\ee{\end{equation}}
\def\barr{\begin{array}}
\def\earr{\end{array}}
\newtheorem{dummy}{dummy}[section]
\newtheorem{Proposition}[dummy]{Proposition}
\newtheorem{Theorem}[dummy]{Theorem}
\newtheorem{Corollary}[dummy]{Corollary}
\newtheorem{lemma}[dummy]{Lemma}
\newtheorem{remark}[dummy]{Remark}
\newtheorem{definition}[dummy]{Definition}
\newtheorem{assumption}[dummy]{Assumption} 
\newtheorem{situation}[dummy]{Situation} 
\newtheorem{notation}[dummy]{Notation}
\newcommand{\ba}{\begin{equation}\begin{aligned}}
\newcommand{\ea}{\end{aligned}\end{equation}}
\newcommand{\bml}{\begin{multline}}
\newcommand{\eml}{\end{multline}}
\newcommand{\Si}{\Sigma}
\newcommand{\si}{\sigma}
\newcommand{\ga}{\mathfrak{g}}
\newcommand{\fB}{\mathfrak{B}}
\newcommand{\fC}{\mathfrak{C}}
\newcommand{\fD}{\mathfrak{D}}
\newcommand{\fM}{\mathfrak{M}}
\newcommand{\fP}{\mathfrak{P}}
\newcommand{\fS}{\mathfrak{S}}
\newcommand{\fX}{\mathfrak{X}}
\newcommand{\fY}{\mathfrak{Y}} 
\newcommand{\fZ}{\mathfrak{Z}}
\newcommand{\fg}{\mathfrak{g}}
\newcommand{\fh}{\mathfrak{h}}
\newcommand{\fp}{\mathfrak{p}}
\newcommand{\fz}{\mathfrak{z}}
\newcommand{\CC}{\mathbb{C}}
\newcommand{\ZZ}{\mathbb{Z}}
\newcommand{\bC}{\mathbb{C}}
\newcommand{\bE}{\mathbb{E}}
\newcommand{\bF}{\mathbb{F}}
\newcommand{\bH}{\mathbb{H}}
\newcommand{\bK}{\mathbb{K}}
\newcommand{\bL}{\mathbb{L}}
\newcommand{\bP}{\mathbb{P}}
\newcommand{\bQ}{\mathbb{Q}}
\newcommand{\bR}{\mathbb{R}}
\newcommand{\bZ}{\mathbb{Z}}
\newcommand{\bfL}{\mathbf{L}}
\newcommand{\one}{\mathbf{1}}
\newcommand{\bw}{\mathbf{w}}
\newcommand{\bSi}{\mathbf{\Si}}
\newcommand{\bfA}{\mathbf{A}}
\newcommand{\bfC}{\mathbf{C}}
\newcommand{\bfU}{\mathbf{U}}
\newcommand{\bfX}{\mathbf{X}}
\newcommand{\bfZ}{\mathbf{Z}}
\newcommand{\cA}{\mathcal{A}}
\newcommand{\cB}{\mathcal{B}}
\newcommand{\cC}{\mathcal{C}}
\newcommand{\cE}{\mathcal{E}}
\newcommand{\cF}{\mathcal{F}}
\newcommand{\cH}{\mathcal{H}}
\newcommand{\cI}{\mathcal{I}}
\newcommand{\cL}{\mathcal{L}}
\newcommand{\cO}{\mathcal{O}}
\newcommand{\cT}{\mathcal{T}}
\newcommand{\cV}{\mathcal{V}}
\newcommand{\cX}{\mathcal{X}}
\newcommand{\cOb}{\cO b}
\newcommand{\Q}{\mathrm{Q}}
\newcommand{\rank}{\mathrm{rank}}
\newcommand{\dd}{\mathrm{d}}
\newcommand{\mempty}{\varnothing}
\newcommand{\ord}{\mathrm{ord}}
\newcommand{\Ker}{\mathrm{Ker}}
\newcommand{\Sg}{\mathrm{Sg}}
\newcommand{\Hom}{\mathrm{Hom}}
\newcommand{\Aut}{\mathrm{Aut}} 
\newcommand{\Ext}{\mathrm{Ext}}
\newcommand{\Pic}{\mathrm{Pic}}
\newcommand{\Spec}{\mathrm{Spec}}
\newcommand{\ev}{ {\mathrm{ev}} }
\newcommand{\MF}{\mathrm{MF}}
\newcommand{\GL}{\mathrm{GL}}
\newcommand{\vir}{\mathrm{vir}}
\newcommand{\ch}{\mathrm{ch}}
\newcommand{\tw}{ {\mathrm{tw}} }
\newcommand{\pr}{ {\mathrm{pr}} }
\newcommand{\eff}{ {\mathrm{eff}} }
\newcommand{\pre}{ {\mathrm{pre}} } 
\newcommand{\td}{\mathrm{td}}
\newcommand{\im}{\mathrm{Im}}
\newcommand{\inv}{\mathrm{inv}}
\newcommand{\age}{\mathrm{age}}
\newcommand{\tot}{\mathrm{tot}}
\newcommand{\ct}{ {\mathrm{ct} } }
\newcommand{\Perf}{\mathrm{Perf}}
\newcommand{\Ob}{\mathrm{Ob}}
\newcommand{\Sym}{\mathrm{Sym}}
\newcommand{\Crit}{\mathrm{Crit}} 
\newcommand{\ess}{ {\mathrm{ess}} }
\newcommand{\noness}{ {\mathrm{noness}} }
\newcommand{\DD}{\mathscr{D}}
\newcommand{\sF}{\mathscr{F}}
\newcommand{\sV}{\mathscr{V}}
\newcommand{\sX}{\mathscr{X}}
\newcommand{\sZ}{\mathscr{Z}}
\newcommand{\D}{\mathsf{D}}
\newcommand{\St}{\mathsf{t}} 
\newcommand{\tD}{\widetilde{D}}
\newcommand{\tM}{\widetilde{M}}
\newcommand{\tN}{\widetilde{N}}
\newcommand{\tT}{\widetilde{T}}
\newcommand{\te}{\widetilde{e}}
\newcommand{\tit}{\widetilde{t}}
\newcommand{\tch}{\widetilde{\ch}}
\newcommand{\tbL}{\widetilde{\bL}} 
\newcommand{\tGa}{\widetilde{\Gamma}}
\newcommand{\tzeta}{\tilde{\zeta}} 
\newcommand{\hc}{\hat{c}}
\newcommand{\hG}{\hat{G}}
\newcommand{\hq}{\hat{q}}
\newcommand{\ufX}{\underline{\fX}}
\newcommand{\pd}{\partial}
\newcommand{\lra}{\longrightarrow}
\newcommand{\Boxs}{\mathrm{Box}(\si)}
\begin{document}

\title{Higgs-Coulomb Correspondence and wall-crossing  in Abelian GLSMs}

\author{Konstantin Aleshkin}
\address{Konstantin Aleshkin, Department of Mathematics, Columbia University, 2990 Broadway, New York, NY 10027, USA}
\email{aleshkin@math.columbia.edu}

\author{Chiu-Chu Melissa Liu} 
\address{Chiu-Chu Melissa Liu, Department of Mathematics, Columbia University, 2990 Broadway, New York, NY 10027, USA}
\email{ccliu@math.columbia.edu}

\dedicatory{Dedicated to the memory of Professor Bumsig Kim}

\begin{abstract}
We compute $I$-functions and central charges for abelian GLSMs using virtual matrix factorizations of Favero and Kim. In the Calabi-Yau case we provide analytic continuation for the central charges by explicit integral formulas. The integrals in question are called hemisphere partition functions and we call the integral representation Higgs-Coulomb correspondence. We then use it to prove GIT stability wall-crossing for central charges.

\end{abstract} 

\maketitle

\flushbottom

\setcounter{tocdepth}{1} 

\tableofcontents


\section{Introduction}
2d gauged linear sigma models (GLSMs) were introduced
by Witten in 1993.  Following \cite{FJR}, the input data of a GLSM is a 5-tuple $(V,G,\bC_R^*, W, \zeta)$,
where $V$ is a finite dimensional complex vector space, $G\subset \GL(V)$ is a reductive linear group known as the gauged group,
$\bC_R^*\cong \bC^*$ acts linearly on $V$ and the action commutes with the $G$-action, $W: V\to \bC$ is a $G$-invariant polynomial which is quasi-homogeneous with respect to the $\bC_R^*$-action, and $\zeta$ is a $G$-character with the property $V_G^{ss}(\zeta)=V_G^s(\zeta)$, i.e.,  every $\zeta$-semistable point is $\zeta$-stable, so that the GIT quotient stack $\sX_\zeta = [V/\!/\!_\zeta G] = [V^{ss}_G(\zeta)/G]$ is an orbifold (i.e. smooth DM stack with trivial generic stabilizer). The space of stability conditions is  $\hG \otimes_{\bZ}\bR \cong \bR^{\dim_\bC Z(G)}$, where $\hG = \Hom(G,\bC^*)$ is the group of 
$G$ characters and $Z(G)$ is the center of $G$; it is decomposed into chambers called phases. 
The $G$-invariant polynomial $W$ descends to $\bw_\zeta:\sX_\zeta\to \bC$; the pair $(\sX_\zeta, \bw_\zeta)$ is a Landau-Ginzburg (LG) model, where 
$\bw_\zeta$ is known as the superpotential.   GLSM invariants of $(V,G,\bC_R^*,W,\zeta)$ are, roughly speaking, virtual counts of curves in the critical locus 
$\sZ_\zeta :=\Crit(\bw_\zeta) = [\left(\Crit(W)\cap V^{ss}_G(\zeta)\right)/G]$ which is often assumed to be compact/proper but can be singular.

\smallskip

Different phases of a GLSM (that is GLSMs which differ only by the choice of
a stability parameter) are closely related to each other. For example, let $G=\bC^*$ act on $V=\bC^6$ by weights $(1,1,1,1,1,-5)$ with $W=p W_5$ where $W_5= x_1^5+\cdots + x_5^5$ is the Fermat quintic polynomial in 5 variables. In the CY/geometric phase $\zeta>0$,   $\sX_\zeta = K_{\bP^4}$, 
$\sZ_\zeta  =  X_5 := \{ W_5=0\}  \subset \bP^4$ is the Fermat quintic threefold, and GLSM invariants are (up to sign) Gromov-Witten (GW) invariants of $X_5$. 
In the LG phase $\zeta<0$,  $\cX_{\zeta} \cong [\bC^5/\mu_5]$, where $\mu_5$ is the group of 5-th roots of unity acts diagonally on $\bC^5$,  $\sZ_\zeta$ is supported at the origin, and GLSM invariants are Fan-Jarvis-Ruan-Witten (FJRW) invariants of the affine LG model $([\bC^5/\mu_5], W_5)$.  
Chiodo-Ruan \cite{CR} proved genus-zero LG/CY correspondence for quintic threefolds  relating GW invariants of $X_5$ and FJRW invariants of $([\bC^5/\mu_5], W_5)$.  Their proof can be summarized into two steps.
 \begin{enumerate} 
 \item ($\epsilon$-wall-crossing) In the CY (resp. LG) phase, the Givental-style mirror theorem says  the $J$-function which governs the genus-zero GW (resp. FJRW) is related to the $I$-function, which can be expressed in terms of explicit
 hypergeometric series, by explicit change of variables known as the mirror map.
 \item ($\zeta$-wall-crossing) The $I$-function admits a Mellin-Barnes integral representation. $I$-functions in the two phases are  related by analytic continuation given by deforming the contour of integration in $\bC$.
 \end{enumerate}
The interpretation of Step (1) as $\epsilon$-wall-crossing appeared in later work. For each $\epsilon\in \bQ_{>0}$, Ciocan-Fontanine--Kim--Maulik \cite{CKM}  introduced
$\epsilon$-stable quasimaps to certain GIT quotient $W/\!/G$. Ciocan-Fontanine and Kim \cite{CK} introduced $J^\epsilon$ which is a generating
function of invariants defined by genus-zero $\epsilon$-stable quasimaps; $J^\epsilon$ specializes to the $I$-function and the $J$-function as
$\epsilon\to 0^+$ and $\epsilon\to +\infty$, respectively. In the presence of a good torus action, they proved
$\epsilon$-wall-crossing which relates $J^\epsilon$ to the $J$-function $J=J^\infty$  for any $\epsilon\in \bQ_{>0}$, by change of variables. They 
computed the $I$-function $I=J^{0+}$ explicitly. In particular, they recover the mirror theorem in the geometric phase first proved
by Givental  \cite{Gi96} and Lian-Liu-Yau \cite{LLY}. The mirror theorem in the LG phase was first proved by Chiodo-Ruan \cite{CR} and later reproved by Ross-Ruan \cite{RR}  via $\epsilon$-wall-crossing.

\smallskip

For a general GLSM, GLSM invariants are defined by integrating against virtual cycles on moduli of $\epsilon$-stable LG quasimaps. The virtual cycle is constructed for narrow sectors by Fan-Jarvis-Ruan \cite{FJR} via cosection localization,  and for both narrow and broad sectors by Favero-Kim \cite{FK} via matrix factorization; Favero-Kim's construction generalizes previous constructions for affine LG models \cite{PV16} and for convex hybrid models \cite{CFGKS}.
When $\epsilon>0$, the definition relies on a good lift $\tilde{\zeta}$ which is a character of the group $\Gamma\subset \GL(V)$ generated by $G$ and $\bC_R^*$, such that $V^{ss}_\Gamma(\tilde{\zeta}) = V^{ss}_G(\zeta)$. Such a good lift does not always exist. At $\epsilon=0^+$, a good lift is not needed.
 In this paper we focus on the $\epsilon \to 0^{+}$ stability condition and study genus-zero GLSM invariants and $\zeta$-wall-crossing for
 abelian GLSMs where $G=(\bC^*)^{\kappa}$. In this case, $\sX_\zeta$ is  a smooth toric DM stack. Let $\tT$-be the diagonal subgroup of $\GL(V)$.
 Using the work of Favero-Kim \cite{FK}, we define and compute K-theoretic GLSM $I$-function $I^K_{\bw}$ which takes values in the K-theory of category 
 of matrix factorizations on the inertial stack $I\sX_\zeta$, and the (cohomological) GLSM $I$-function $I_{\bw}$ which takes values in the GLSM state space 
 $H_{\bw}$.  We also define and compute K-theorectic $\tT$-equivariant $I$-function $I^K_{\tT}$ of $(V,G, \bC_R^*, 0,\zeta)$ which takes values in $K_{\tT}(I\sX_\zeta)$, 
 and $\tT$-equivariant $I$-function $I_{\tT}$ of $(V,G,\bC_R^*,0,\zeta)$ which takes values in $H_{\tT}(I\sX_\zeta)$. 

\smallskip

 In Gromov-Witten theory $I$-functions fail to capture integral structure on cohomology~\cite{Iritani}. Hosono 
 defined an object called a central charge that sees the Gamma integral structure. Integral structures are crucial for integral representations. Central charges are power series which are constructed
 from both $J$-function and objects of the derived category of coherent sheaves of the target manifold. Hori and Romo~\cite{HR} constructed explicit analytic functions
  called hemisphere partition functions and conjectured that their power series expansions are equal to the central charges in appropriate cases.
We define the GLSM central charge of a matrix factorization $\fB$ of $(\sX_\zeta,\bw_\zeta)$ as
$Z_{\bw}(\fB) = \langle I_{\bw}, \Gamma_\bw \ch_{\bw}( [\fB])\rangle$ where  $\Gamma_{\bw}$ is an appropriate version of Iritani's $\Gamma$-class
and $[\fB]$ is the K-theory class of $\fB$. 
We also define the $\tT$-equivariant central charge of a $\tT$-equivariant perfect complex $\fB$ on $\sX_\zeta$
as $Z^{\tT}(\fB) = \langle I_{\tT}, \Gamma_{\tT}\ch_{\tT}([\fB])\rangle$.   
We show that our central charges indeed have integral representations of the hemisphere partition function form (Theorem \ref{the:expansion}). We call this representation
  Higgs-Coulomb correspondence because in physics GLSM central charges can be obtained by a version of the Higgs branch localization and hemisphere partition functions are computed by the Coulomb branch localization (c.f.~\cite{BeniniCremonesi}). The integral
  representations we obtain depend continuously on the complexified stability parameter $\theta = \zeta +2\pi\sqrt{-1}B$ and do not have any restrictions on $\zeta$.

\smallskip

  In this philosophy $\zeta$-wall-crossing follows immediately by analytic
  continuation in $\zeta$ (Theorem~\ref{th:wallCrossing}). Remarkably, matrix factorizations of the central
  charges in question are related by the Fourier-Mukai transform~\cite{BP10}.
  Let $\zeta_{\pm}$ represent stability conditions in two
  adjacent chambers and $\cB_{+}$ be a matrix factorizations in the phase corresponding
  to $\zeta_{+}$. Then, the analytic continuation of the central charge of $\cB_{+}$
  is a central charge of $\cB_{-} = \mathrm{FM}(\cB_{+})$. Particular Fourier-Mukai
  kernel is choosen by $B = \im(\theta)/2\pi$ and convergence of the integral
  representation is equivalent to the so-called Grade Restriction Rule~\cite{HL, BFK, CIJS}. 

 \begin{equation}
    \begin{tikzcd}
      & \cB \arrow[ld,left,"\pi_{+}"'] \arrow[rd,"\pi_{-}"] &   \\
      \cB_{+} \arrow[rr, dashed, "FM"] &    & \cB_{-}
    \end{tikzcd}
  \end{equation}

\subsection*{Acknowledgements}  
We wish to thank Daniel Halpern-Leistner, Kentaro Hori, Hiroshi Iritani, Andrei Okounkov, Tudor P\u{a}durariu, Renata Picciotto, Alexander Polishchuk,  Che Shen, Yefeng Shen, Mark Shoemaker, and Yang Zhou for helpful communications. 
We thank the hospitality and support of the Simons Center for Geometry and Physics (SCGP) during the program {\em Integrability, Enumerative Geometry and Quantization} (August 22-September 23, 2022) where part of the paper was completed. The authors are partially supported by NSF grant DMS-1564497.

\section{Geometry  of gauged linear sigma models and $A$-model state spaces}  \label{sec:geometry} 
In this paper, all the schemes and algebraic stacks are defined over $\Spec \, \bC$, where $\bC$ is the field of complex numbers. 

\subsection{Gauged linear sigma models} \label{sec:GLSM}
We start with the setup of a general gauged linear sigma model (GLSM) following
\cite{FJR, FK}. Part of our formulation in Section \ref{sec:Higgs} is closer to that in the more general setting in \cite{CJR}. 

The input data of a GLSM is a 5-tuple $(V, G, \bC_R^*, W, \zeta)$, where 
\begin{enumerate}
\item (linear space)  $V =\Spec \bC[x_1,\ldots, x_{n+{\boldsymbol{\kappa}}}]\simeq  \bC^{n+{\boldsymbol{\kappa}}}$ is a finite dimensional complex vector space, where ${\boldsymbol{\kappa}} =\dim G$. 
\item (gauge group) $G\subset \GL(V)$ is a reductive algebraic group. 
\item (vector R-symmetry) $\bC^*_R\cong \bC^*$ acts linearly and faithfully on $V$, so we may view $\bC^*_R$ as a subgroup of $\GL(V)$. 
We assume that 
\begin{enumerate}
\item the intersection $G\cap \bC^*_R$ is finite, and
\item the $\bC^*_R$-action commutes with the $G$-action. 
\end{enumerate}
The finite group $G\cap \bC^*_R$ must be cyclic, generated by an element $J$ of finite order $r\in \bZ_{>0}$, given explicitly in Equation \eqref{eqn:J} below.  The  surjective group homomorphism
$\bC^*_R \to \bC^*_\omega := \bC^*/\langle J\rangle$ is a degree $r$ covering map. Let $\Gamma := G\bC_R^* \subset \GL(V)$ be the subgroup generated
by $G$ and $\bC_R^*$. Then we have a short exact sequence  of groups:
\begin{equation}
1\to G\to \Gamma \stackrel{\chi}{\to} \bC^*_\omega \to 1.
\end{equation}  
By (b), the $\Gamma$-action on $V$ induces
a $\bC^*_\omega$-action on the smooth Artin stack $[V/G]$ in the sense of \cite{Ro}. 
The  $R$-charges are
$$
(q_1,\ldots,q_{n+{\boldsymbol{\kappa}}}) =\Big(\frac{2c_1}{r},\ldots, \frac{2c_{n+{\boldsymbol{\kappa}}}}{r}\Big),
$$
where $c_1,\ldots,c_{n+{\boldsymbol{\kappa}}} \in \bZ$ are the weights of the $\bC^*_R$-action on $V\simeq \bC^{n+{\boldsymbol{\kappa}}}$.  (Note that $\mathrm{gcd}(c_1,\ldots, c_{n+{\boldsymbol{\kappa}}})=1$ since
$\bC_R^*$ acts faithfully on $V$.)  Let
\begin{equation}\label{eqn:J}
J= (e^{2\pi\sqrt{-1}c_1/r},\ldots, e^{2\pi\sqrt{-1}c_{n+{\boldsymbol{\kappa}}}/r}).
\end{equation}
\item (superpotential) $W: V\to \bC$  is a $G$-invariant regular function which is a quasi-homogeneous polynomial of degree $r$ with respect to the $\bC^*_R$-action on $V$; in other words,
$W\in \bC[x_1,\ldots, x_{n+{\boldsymbol{\kappa}}}]^G$ and 
$$
W(t^{c_1} x_1,\ldots, t^{c_{n+{\boldsymbol{\kappa}}}}x_{n+{\boldsymbol{\kappa}}}) = t^r W(x_1,\ldots,x_{n+{\boldsymbol{\kappa}}}),\quad
t\in \bC_R^*,\quad (x_1,\ldots,x_{n+{\boldsymbol{\kappa}}})\in V. 
$$
It descends to a function 
$\bw: [V/G]\to \bC$ of degree 1 with respect to the $\bC^*_\omega$-action on $[V/G]$.
\item (stability condition) $\zeta \in  \Hom(G,\bC^*) =\Hom(G^{\text{ab}},\bC^*)$, where $G^{\text{ab}} = G/[G,G]$ is the abelianization of $G$.
We view $\Hom(G,\bC^*)$ as an additive group and let $\chi^\zeta: G\to \bC^*$ denote the associated $G$-character. 
Let $V^{ss}_G(\zeta)$ (respectively $V^s_G(\zeta)$) be the set of semistable (respectively stable)
points in $V$ determined by the $G$-linearization on the trivial line bundle $V\times \bC\to V$ given by $g\cdot (v,t) = (g\cdot p, \chi^\zeta(g)t)$.   We assume that
$V^s_G(\zeta) = V^{ss}_G(\zeta)$. Then the quotient stack
$\sX_\zeta := [V^{ss}_G(\zeta) /G]$  is an orbifold (i.e. a smooth Deligne-Mumford stack with trivial generic stabilizer) of dimension $n$. The GIT quotient 
$X_\zeta:= V\sslash_\zeta G = V^{ss}_G(\zeta) /G$ is the coarse moduli space of $\sX_\zeta$. 
\end{enumerate} 
Let $\mathscr{Z}_\zeta := [ (\mathrm{Crit}(W)\cap V^{ss}_G(\zeta))/G]$ be the critical locus of 
$\bw_\zeta:= \bw|_{\sX_\zeta} :\sX_\zeta\to \bC$. We say $\zeta$ is in a geometric phase if $\mathrm{Crit}(W)\cap V^{ss}(\zeta)$ is non-singular,
which implies $\mathscr{Z}_\zeta$ is an orbifold.   

The {\em central charge} of the GLSM is  (cf. \cite[Definition 3.2.3]{FJR})  
\begin{equation}
\hc := \dim V -\dim G -  2\hq = n- 2\hq,\quad \text{where } \hq= \frac{1}{2} \sum_{j=1}^{n+{\boldsymbol{\kappa}}} q_j 
=\frac{1}{r}\sum_{j=1}^{n+\boldsymbol{\kappa}} c_j. 
\end{equation}
\begin{remark}
By the condition (4) above, the superpotential $W$ is  semi-invariant with respect to $\Gamma$ in the sense of \cite[Section 2]{PV11}, i.e., 
\begin{equation}\label{eqn:semi-invariant}
W(\gamma\cdot x) = \chi(\gamma)W(x) \quad \text{for any }\gamma\in \Gamma \text{ and } x\in V.
\end{equation} 
\end{remark} 

\begin{remark} In this paper the stability condition  $\zeta$ is an element in $\Hom(G,\bC^*)$  and corresponds to the symbol $\theta$ in \cite{FJR, FK}. 
The 1-dimensional torus $\bC^*_\omega$ in this paper corresponds to $\mathbf{C}^*_\omega$ in \cite{CJR}.  
\end{remark} 

\begin{remark} At this point, we do not assume the critical locus $\mathscr{Z}_\zeta$ is proper.    This allows us to include the
case without superpotential as  a special case where the superpotential and the R-charges are zero, i.e. $W=0$ and $q_j=0$ for $j=1,\ldots, n+{\boldsymbol{\kappa}}$;
note that in this special case the $\bC_R^*$-action on $V$ is trivial, and in particular, not faithful. 
\end{remark}

\subsection{Abelian GLSMs} \label{sec:abelian} 
In the rest of this paper, we consider abelian GLSMs  where the gauge group $G =  (\bC^*)^{\boldsymbol{\kappa}}$ is a complex algebraic torus.  In this case
$[G,G]=\{1\}$ and $G^{\mathrm{ab}}= G$. 

Up to an inner automorphism of $\GL(V)$, we may assume the image of
$\rho_V: G\to \GL(V)$  is contained in the diagonal torus  $\tT\simeq (\CC^*)^{n+{\boldsymbol{\kappa}}} \subset \GL(V)\cong \GL_{n+{\boldsymbol{\kappa}}}(\bC)$. 
Then $\sX_\zeta$ is an $n$-dimensional  toric orbifold, and its coarse moduli $X_\zeta$ is a semi-projective simplicial toric variety which contains
$T:=\tT/G \cong (\bC^*)^n$ as a Zariski dense open subset.  We have a short exact sequence of abelian groups
\begin{equation}\label{eqn:GtT}
1\to G \stackrel{\rho_V}{\lra} \tT \lra T \to 1.
\end{equation}

\begin{remark}
The notation in this subsection is similar to but slightly different from that in \cite{CIJ}: $G$, $\tT$, and $T$ in this paper correspond to $K$, $T$, and
$\mathcal{Q}$ in \cite[Section 4.3]{CIJ}, respectively; ${\boldsymbol{\kappa}}$ and $n$ in this paper correspond  to $r$ and $m-r$ in \cite[Section 4.1]{CIJ}, respectively. 
\end{remark}

\begin{itemize}
\item Applying $\Hom(\bC^*, -)$ to \eqref{eqn:GtT}, we obtain the following short exact sequence of cocharater lattices
\begin{equation}\label{eqn:LtN2} 
0 \to \bL :=\Hom(\bC^*,G) \lra \tN := \Hom(\bC^*, \tT)\lra N :=\Hom(\bC^*,T) \to 0,
\end{equation} 
where $\bL\cong \bZ^{{\boldsymbol{\kappa}}}$, $\tN\cong \bZ^{n+{\boldsymbol{\kappa}}}$, and $N\cong \bZ^n$. 
\item  Applying $\Hom(-,\bC^*)$ to \eqref{eqn:GtT}, or equivalently dualizing \eqref{eqn:LtN2}, we obtain the following short exact sequence of character lattices:
\begin{equation}\label{eqn:MML}
0 \to M :=\Hom(T,\bC^*) \lra \tM := \Hom(\tT,\bC^*) \lra \bL^\vee := \Hom(G,\bC^*) \to 0
\end{equation}
\end{itemize}

The map $\bL \to \tN \cong \bZ^{n+{\boldsymbol{\kappa}}}$ is given by $(\D_1,\ldots,\D_{n+{\boldsymbol{\kappa}}})$ where $\D_i \in \bL^\vee$. The stability condition $\zeta$ is an element in $\Hom(G,\bC^*)=\bL^\vee$. 
If we choose a $\bZ$-basis $\{\xi_1,\ldots,\xi_{\boldsymbol{\kappa}}\}$ of $\bL$ (which is equivalent to a choice of an isomorphism $G\simeq (\bC^*)^{\boldsymbol{\kappa}}$) and let $\{\xi_1^*,\ldots,\xi_{\boldsymbol{\kappa}}^*\}$ be the dual $\bZ$-basis of $\bL^\vee$, then
$$
\D_i =\sum_{a=1}^{\boldsymbol{\kappa}} Q_i^a  \xi_a^*
$$
for some $Q_i^a \in \bZ$. Given any $\St =\sum_{a=1}^{\boldsymbol{\kappa}} t_a \xi_a^* \in \bL^\vee$, where $t_1,\dots, t_{\boldsymbol{\kappa}}\in \bZ$, let
$\chi^{\St}:G\to \bC^*$ be the corresponding $G$ character given by 
\begin{equation}
\chi^{\St}(s_1,\ldots, s_{\boldsymbol{\kappa}})= \prod_{a=1}^{\boldsymbol{\kappa}} s_a^{t_a}.
\end{equation}
Then the map $G\simeq (\bC^*)^{\boldsymbol{\kappa}} \to \tT \cong (\bC^*)^{n+{\boldsymbol{\kappa}}}$ is given by 
\begin{equation}\label{eqn:chi}
s  =(s_1,\ldots,s_{\boldsymbol{\kappa}}) \in G  \mapsto \left( \chi^{\D_1} (s), \ldots, \chi^{\D_{n+{\boldsymbol{\kappa}}}}(s) \right) \in \tT, \quad \text{where } \chi^{\D_i}(s) =\prod_{a=1}^{\boldsymbol{\kappa}} s_a^{Q_i^a}.
\end{equation} 
Given a lattice  $\Lambda \cong \bZ^r$ and a field $\bF$, we define
$\Lambda_\bF := \Lambda\otimes_{\bZ} \bF \cong \bF^r$; in this paper, $\bF =\bQ$, $\bR$, or $\bC$. 

\begin{remark}
The map $G\to \tT$ is injective iff $\D_1,\ldots, \D_{n+{\boldsymbol{\kappa}}}$ generate the lattice $\bL^\vee$ over $\bZ$. In Section \ref{sec:Coulomb} (The Coulomb Branch), we work with the weaker assumption that $\D_1,\ldots, \D_{n+{\boldsymbol{\kappa}}}$  span the vector space $\bL^\vee_\bQ$ over $\bQ$, or equivalently, the kernel $K$ of  the group homomorphism $G\to \tT$ is finite. Then $\sX_\zeta$ is a smooth toric DM stack with a generic stabilizer $K$; it is a toric orbifold iff $K$ is trivial. It is also possible to work in this generality in Section \ref{sec:Higgs} (The Higgs branch). We assume $K$ is trivial in Section \ref{sec:Higgs}  mainly because \cite{FJR} and \cite{FK} assume so, but $K$ can be non-trivial in orbifold quasimap theory \cite{CCK} which can be viewed as a mathematical theory of  GLSM without superpotential.
\end{remark} 

Let  $G_{\bR}  \simeq  U(1)^{\boldsymbol{\kappa}}$ be the maximal compact subgroup  of $G \simeq (\bC^*)^{\boldsymbol{\kappa}}$. Then $\bL_\bR$ is 
canonically isomorphic to the Lie algebra $\fg_\bR$ of $G_\bR$. 
The $G$-action on $V=\Spec\bC[x_1,\ldots,x_{n+{\boldsymbol{\kappa}}}]$ restricts to a Hamiltonian $G_\bR$-action  on the K\"{a}hler manifold
$(V, \frac{\sqrt{-1}}{2} \sum_{i=1}^{n+{\boldsymbol{\kappa}}} dx_i\wedge d\bar{x}_i)$ with a moment map
$$
\mu: V \longrightarrow  \fg_\bR^\vee= \bL_\bR^\vee, \quad (x_1,\ldots,x_{n+{\boldsymbol{\kappa}}}) = \frac{1}{2} \sum_{a=1}^{\boldsymbol{\kappa}} Q_i^a |x_i|^2 \xi^*_a.
$$
Then 
$$
\sX_\zeta = [ V^{ss}_G(\zeta)/G] = [\mu^{-1}(\zeta)/G_\bR].
$$
From this perspective, the stability condition $\zeta$ is a regular value of the moment map $\mu$, and can be an element in $\bL_\bR^\vee \cong \bR^{{\boldsymbol{\kappa}}}$. 

\subsection{Anticones and the extended stacky fan}  \label{sec:anticones} 
The triple $(V,G,\zeta)$, which is part of the input data $(V,G,\bC_\bR^*,W,\zeta)$ of the given abelian GLSM, determines
a set $\cA_\zeta$ of anticones and an extended stacky fan $\bSi_\zeta= (N,\Sigma_\zeta,\beta,S_\zeta)$. We describe them
in this subsection, and describe $V_G^{ss}(\zeta)\subset V$ in terms of anticones.

We fix an isomorphism $V=\Spec\bC[x_1,\ldots,x_{n+{\boldsymbol{\kappa}}}] \cong \bC^{n+{\boldsymbol{\kappa}}}$, which determines an ordered $\bZ$-basis
$(\te_1,\ldots, \te_{n+{\boldsymbol{\kappa}}})$ of $\tN$. In particular,
$$
\tN =\bigoplus_{i=1}^{n+{\boldsymbol{\kappa}}}\bZ \te_i. 
$$
Let $v_i\in N$ be the image of $e_i$ under $\tN\to N$. Define $\beta=(v_1,\ldots,v_{n+{\boldsymbol{\kappa}}})$. 

Given a subset $I$ of $\{1,\ldots,n+{\boldsymbol{\kappa}}\}$, let $I' =\{1,\ldots,n+{\boldsymbol{\kappa}}\}\setminus I$ be its complement, and define
$$
\angle_I =\{ \sum_{i\in I} a_i \D_i: a_i \in \bR, a_i >0\} \subset \bL^\vee_\bR, \quad 
\sigma_I =  \{ \sum_{i\in I} a_i v_i: a_i\in \bR, a_i\geq 0\} \subset N_\bR. 
$$
If $I=\emptyset$ is the empty set, define $\sigma_{\emptyset} =\{0\}$.  

For a fixed $G$-action on $V$, a stability condition $\zeta\in \bL_\bR^\vee$ determines the following three sets. 
\begin{eqnarray*}
\cA_\zeta &=& \{ I\subset \{1,\ldots,n+{\boldsymbol{\kappa}}\}: \zeta\in \angle_I\}, \\
\Sigma_\zeta &=& \{ \sigma_I : I'  \in \cA_\zeta\},\\
S_\zeta &=& \{ i\in \{1,\ldots, n+{\boldsymbol{\kappa}}\}: \sigma_{\{i\}} \notin \Sigma_\zeta  \} = \{ i\in \{1,\ldots,n+{\boldsymbol{\kappa}}\}: \{i\}'  \notin \cA_
\zeta\}.
\end{eqnarray*}
Note that $S_\zeta \subset I$ for any $I\in \cA_\zeta$.  

\begin{assumption} \label{simplical} 
We choose the stability condition $\zeta\in \bL_\bR^\vee$ such that  the following three equivalent conditions
are satisfied.
\begin{enumerate}
\item[(i)] For any  $I \in\cA_\zeta$, $\{ \D_i: i\in I\}$ spans $L_\bQ^\vee$ as a vector space over $\bQ$. 
\item[(ii)] For any $I\in \cA_\zeta$, $\{v_i : i \in I' \}$ is a set of linearly independent vectors in $N_\bQ$, or equivalently,
$\sigma_{I'}$ is a simplicial cone in $N_\bR$.
\item[(iii)] $\Sigma_\zeta$ is a simplicial fan in $N_\bR$. 
\end{enumerate}
\end{assumption} 

Elements in $\Sigma_\zeta$ are called {\em cones},  while elements in $\cA_\zeta$ are called {\em anticones}. 
By (i), the cardinality $|I|$ of any anticone $I\in \cA_\zeta$ is greater or equal to ${\boldsymbol{\kappa}}$.  Let
$\Sigma_\zeta(d)$ be the set of $d$-dimensional cones in $\Sigma_\zeta$. Then $\sigma \in \Sigma_\zeta(d)$ iff
$\sigma =\sigma_I$ where $|I|=d$ and $I' \in \cA_\zeta$. Let
$$
\cA^{\min}_\zeta =\{ I\in \cA_\zeta: |I|={\boldsymbol{\kappa}}\} = \{ I\in \cA_\zeta: \sigma_{I'} \in \Sigma_\zeta(n)\} 
$$
be the set of minimal anti-cones. 

The {\em irrelevant ideal} of $\zeta$ is the ideal $B_\zeta$ in $\bC[x] :=\bC[x_1,\ldots, x_{n+{\boldsymbol{\kappa}}}]$ generated
by $\{ x_I := \prod_{i\in I} x_i : I\in \cA_\zeta\}$.  Let $Z_\zeta = Z(B_\zeta) $ be the closed subvariety of $V =\Spec\bC[x]$  defined by 
the irrelevant ideal $B_\zeta\subset \bC[x]$, and let $U_\zeta = V\setminus Z_\zeta$.  If $\zeta \in \bL^\vee$ is a $G$ character,  then 
$U_\zeta = V^{ss}_G(\zeta) $, and  $Z_\zeta = V^{un}_G(\zeta)$ is the set of unstable points defined by $\zeta$.

For any $I\in\cA_\zeta$, define 
\begin{equation}\label{eqn:VI}
V_I =  V \setminus Z(x_I)=  \{(x_1,\ldots,x_{n+{\boldsymbol{\kappa}}})\in V: x_i\neq 0 \text{ if } i\in I\}  = (\bC^*)^I\times \bC^{I'}.
\end{equation}
Then
$$
U_\zeta  =\bigcup_{I\in \cA_\zeta} V_I. 
$$
Note that if $I, J\in \cA_\zeta$ and $I\subset J$ then $V_J\subset V_I$. Therefore,
$$
U_\zeta = \bigcup_{I\in \cA_\zeta^{\min}} V_I. 
$$

We now give an alternative description of $Z_\zeta$.  If $I\subset \{1,\ldots,n+{\boldsymbol{\kappa}}\}$,  and $I\notin \cA_\zeta$, or equivalently $\sigma_{I'} \notin \Sigma_\zeta$, define
$$
Z_I = \bC^{I}\times \{0\} ^{I'} = \{ (x_1,\ldots, x_{n+{\boldsymbol{\kappa}}}) \in V: x_i =0 \text{ if } i \in I'\}. 
$$
Then
$$
Z_\zeta = \bigcup_{ I\notin\cA_\zeta } Z_I.
$$

Define 
$$
C_\zeta := \bigcap_{I\in \cA_\zeta} \angle_I = \bigcap_{I\in\cA^{\min}_\zeta} \angle_I \subset L_\bR^\vee.
$$
The open cone $C_\zeta$ is  called the {\em extended ample cone} in \cite{CIJ}.  It is  a chamber in the space of stability conditions: if $\zeta'\in C_\zeta$ then
$\Sigma_{\zeta'} =\Sigma_\zeta$ and $U_{\zeta'} = U_\zeta$.  We recall the following facts.
\begin{enumerate}
\item The quotient stack  
$$
 \sX_\zeta =[U_\zeta /G] 
 $$
is the smooth toric Deligne-Mumford (DM) stack defined by  the stacky fan $(N,\Sigma_\zeta,\beta)$. See Borisov-Chen-Smith \cite{BCS} for definition of toric Deligne-Mumford stacks in terms of stacky fans. 
\item  The coarse moduli space of $\sX_\zeta$ is the categorical (and geometric) quotient
$$ 
X_\zeta = U_\zeta /G  
$$
which is the toric variety defined by the simplicial fan $\Sigma_\zeta \subset N_\bR$.  
See \cite{Fu93, CLS} for  an introduction of toric varieties, and in particular the definition  
of general normal toric varieties in terms of fans.
\item  If $\zeta\in \bL^\vee$ is a $G$-character then 
$$
\sX_\zeta =[V^{ss}_G(\zeta)/G]
$$
is the GIT quotient stack, and 
$$
X_\zeta = V^{ss}_G(\zeta)/G  = V\sslash_\zeta G
$$
is the GIT quotient. 

\item  The triple $(V,G,\zeta)$ determines a particular presentation of $\sX_\zeta$ as a quotient stack $[U_\zeta/G]$  and an extended stacky fan  
$$
\bSi_\zeta= (N, \Sigma_\zeta, \beta,S_\zeta),
$$
 a notion  introduced by Jiang \cite{Ji08}. 
 \end{enumerate}

\subsection{Closed toric substacks and their generic stabilizers}   \label{sec:closed-substacks}
The $\tT$-divisor $\tD_j= \{x_j=0\} \subset V=\Spec\bC[x]$  restricts to a $\tT$-divisor $\tD_j \cap U_\zeta  \subset  U_\zeta$ which descends to 
a $T$-divisor $\DD_j =[ (\tD_j\cap U_\zeta )/G]$ in the toric stack $\sX_\zeta = [U_\zeta/G]$
and a $T$-divisor $D_j$ in the toric variety $X_\zeta$.  Note that $\DD_j$ and $D_j$ are empty if $j\in S_\zeta$.

Given  any $\sigma \in \Sigma_\zeta(d)$, where $0\leq d\leq n$,  we have $\sigma=\sigma_{I'}$ for some $I\in \cA_\zeta$ with $|I|={\boldsymbol{\kappa}} + n-d$. Let
$$
\sV(\sigma) =  \bigcap_{i\in I'} \DD_i   \subset \sX_\zeta,\quad
V(\sigma) = \bigcap_{i\in I'} D_i \subset X_\zeta.
$$
Then $\sV(\sigma)$ (resp. $V(\sigma)$)  is an $(n-d)$-dimensional closed toric substack (resp. subvariety) of $\sX_\zeta$
(resp. $X_\zeta$). 
The generic stabilizer of the toric stack $\cV(\sigma)$ is  the finite group 
$$
G_\sigma =\bigcap_{i\in I} \Ker(\chi^{\D_i})   \subset G
$$
where $\chi^{\D_i}: G\to \bC^*$ is defined as in \eqref{eqn:chi}.  If $\tau, \sigma\in \Sigma_\zeta$ and $\tau \subset \sigma$ then $\sV(\tau)\supset \sV(\sigma)$, so $G_\tau \subset G_\sigma$. In particular, $\sV(\{0\}) = \sX_\zeta$ and $G_{\{0\} }$ is trivial. 
If $I \in \cA_\zeta^{\min}$,  then $\sigma_{I'} \in \Sigma_\zeta(n)$ and $\fp_I := \sV(\sigma_{I'}) \simeq [\bullet/G_{\sigma_{I'}}] = B G_{\sigma_{I'}}$  is 
the unique $T$-fixed point in 
$$
\sX_I  := [V_I/G] \simeq  \Big[ \big( (\bC^*)^I \times \bC^{I'} \big)/ G \Big] \simeq [\bC^{I'}/G_{\sigma_{I'}}]  \simeq [\bC^n/G_{\sigma_{I'}}]. 
$$
Here $\bullet =\Spec\bC$ is a point, $BG_{\sigma_{I'} }$ is the classifying space
of $G_{\sigma_{I'}}$, and  $V_I$ is  defined by Equation \eqref{eqn:VI}.

\subsection{Line bundles} \label{sec:line-bundles} 
Let  
$$
U_j^T:= \cO_{\sX_\zeta}(-\DD_j) \in \Pic_T(\sX_\zeta), \quad  u_j^T := - (c_1)_T(U_j^T) \in H^2_T(X_\zeta;\bQ).
$$
Then $u_j^T$ is the $T$-equivariant Poincar\'{e} dual of $\DD_j$. Note that $u_j^T=0$ if $j\in S_\zeta$.
The $T$-equivariant Chern character of $U_j^T$ is  $\ch_T(U_j^T) = e^{-u_j^T}$. 

The $T$-equivariant line bundles $U_j^T$ generate $K_T(\sX_\zeta)$ as an algebra over $\bZ$.  Any group homomorphism $A\to T$ induces a map 
$[\sX_\zeta/A] \to [\sX_\zeta/T] =[V^{ss}_G(\zeta)/\tT]$ and ring homomorphisms
$$
K_T(\sX_\zeta) \to K_A(\sX_\zeta),\quad  \phi^*: H_T^*(X_\zeta;\bQ)\to H_A^*(X_\zeta;\bQ).
$$

Let $U_j \in \Pic(\sX_\zeta)$ (resp. $u_j\in H^2(X_\zeta;\bQ)$)  be the image of $U_j^T\in \Pic_T(\sX_\zeta)$ (resp. $u_j^T\in H^2_T(X_\zeta;\bQ)$)
under the surjective group homomorphism $\Pic_T(\sX_\zeta)\to \Pic(\sX_\zeta)$ (resp. $H^2_T(X_\zeta;\bQ) \to H^2(X_\zeta;\bQ)$) induced by
the group homomorphism $\{1\}\to T$. Then $c_1(U_j) =-u_j$ and $\ch(U_j) = e^{-u_j}$. 

Let $U_j^{\tT} \in \Pic_{\tT}(\sX_\zeta)$ (resp. $u^{\tT}_j\in H^2_{\tT}(X_\zeta;\bQ)$)  be the image of $U_j^T\in \Pic_T(\sX_\zeta)$ (resp. $u_j^T\in H^2_T(X_\zeta;\bQ)$)
under the group homomorphism $\Pic_T(\sX_\zeta)\to \Pic_{\tT}(\sX_\zeta)$ (resp. $H^2_T(X_\zeta;\bQ) \to H^2_{\tT}(X_\zeta;\bQ)$) induced by
the group homomorphism $\tT\to T=\tT/G$. Then $(c_1)_{\tT}(U_j^{\tT}) =-u_j^{\tT} $ and $\ch_{\tT}(U^{\tT}_j) = e^{-u^{\tT}_j}$. 

For $i=1,\ldots, n+{\boldsymbol{\kappa}}$, let $\chi^{\D_i}:G\to \bC^*$ be defined as in \eqref{eqn:chi}. 
For $a=1,\ldots,{\boldsymbol{\kappa}}$, let $\chi^{\xi_a^*}: G\to \bC^*$ be the character associated to $\xi_a^*\in \bL^\vee$, i.e.,  $\chi^{\xi_a^*}(s_1,\ldots,s_{\boldsymbol{\kappa}}) = s_a$. 
Let $G$ act on $U_\zeta\times \bC$ by 
$$
s\cdot (x_1,\ldots, x_{n+{\boldsymbol{\kappa}}},y) = (\chi^{\D_1}(s) x_1,\ldots, \chi^{\D_{n+{\boldsymbol{\kappa}}}} x_{\boldsymbol{\kappa}}, \chi^{\xi_a^*}(s^{-1}) y),
$$
This defines a $G$-equivariant line bundle on $U_\zeta$, or equivalently  a line bundle $P_a$ on $\sX_\zeta =[U_\zeta/G]$. 
Let $p_a = -c_1(P_a) \in H^2(X_\zeta;\bQ)$. Then  $\ch(P_a) = e^{-p_a}$. For $j=1,\ldots, n+{\boldsymbol{\kappa}}$, we have
\begin{equation}\label{eqn:u-p}
U_j =\prod_{a=1}^{\boldsymbol{\kappa}} P_a^{Q_j^a} \in \Pic(\sX_\zeta), \quad u_j = \sum_{a=1}^{\boldsymbol{\kappa}} Q_j^a p_a \in H^2(X_\zeta;\bQ). 
\end{equation} 

Let $\Lambda_j \in \Pic_{\tT}(\bullet) =\Pic(B\tT)$ be the $\tT$-equivariant line bundle over a point $\bullet$ defined by
the $\tT$-character $\tit_j$, and let $\lambda_j = -(c_1)_{\tT}(\Lambda_j) \in H^2_{\tT}(\bullet;\bZ) = H^2(B\tT;\bZ)$. 
Then
$$
K_{\tT}(\bullet) =\bZ[\Lambda_1^{\pm 1},\ldots, \Lambda_{n+{\boldsymbol{\kappa}}}^{\pm 1}],\quad
H^*(B\tT;\bZ) =\bZ[\lambda_1,\ldots, \lambda_{n+{\boldsymbol{\kappa}}}]. 
$$
For later convenience, we introduce  the following definition.
\begin{definition} \label{D-dual} 
Let $I\in \cA^{\min}_\zeta$ be a minimal anticone. Then  $\{\D_i: i\in I\}$ is a $\bQ$-basis of $\bL^\vee_\bQ$. Let
$\{ \D_i^{*, I}: i\in I\}$ be the dual $\bQ$-basis of $\bL_\bQ$,  i.e., for any $i,j\in I$,  
$$
\langle \D_i, \D_j^{*,I}\rangle =\delta_{ij}
$$
where $\langle  \;\, , \; \, \rangle :\bL_\bQ^\vee\times \bL_\bQ\to \bQ$ is the natural pairing between dual vector spaces over $\bQ$. 
\end{definition}

Given any $I\in \cA_\zeta^{\min}$, the inclusion $\iota_I: \fp_I \hookrightarrow \sX_\zeta$ of the torus fixed point $\fp_I$ induces
a ring homomorphism
$$
\iota_I^*: H^*_{\tT}(X_\zeta;\bQ) \lra H^*_{\tT}(\fp_I;\bQ) \simeq  H^*(B\tT;\bQ) =\bQ[\lambda_1,\ldots,\lambda_{n+{\boldsymbol{\kappa}}}].
$$
For $i=1,\ldots, n+{\boldsymbol{\kappa}}$, 
\begin{equation} \label{eqn:u-at-I}
\iota_I^* u_i^{\tT} = \sum_{j\in I}\langle \D_i,   \D_j^{*, I}\rangle\lambda_j  -\lambda_i \quad \forall I\in \cA_\zeta^{\min}. 
\end{equation} 
In particular, if $i\in I$ then  $\iota_I^* u_i^{\tT} =0$, which is consistent with the fact that the $T$-fixed point $\fp_I$ is not contained  in the divisor $\DD_i$.

Let $\tT\times G$ act on $U_\zeta \times \bC$ by 
$$
(\tit, s) \cdot (x_1,\ldots, x_{n+{\boldsymbol{\kappa}}},y) = ( \tit_1 \chi^{\D_1}(s) x_1,\ldots, \tit_{n+{\boldsymbol{\kappa}}} \chi^{\D_{n+{\boldsymbol{\kappa}}}} x_{\boldsymbol{\kappa}}, \chi^{\xi_a^*}(s^{-1}) y),
$$
where $\tit =(\tit_1,\ldots,\tit_{n+{\boldsymbol{\kappa}}}) \in \tT$ and $s\in G$. 
This defines a $\tT\times G$-equivariant line bundle on $U_\zeta$, or equivalently  a $\tT$-equivariant line bundle $P^{\tT}_a$ on $\sX_\zeta =[U_\zeta/G]$. 
Let $p^{\tT} _a = -c_1(P^{\tT}_a) \in H^2_{\tT}(X_\zeta;\bQ)$. Then  $\ch_{\tT}(P^{\tT}_a) = e^{-p^{\tT}_a}$. 
For $a=1\ldots,{\boldsymbol{\kappa}}$, 
\begin{equation} \label{eqn-p-at-I}
\iota_I^* p_a^{\tT} =  \sum_{j\in I} \langle \xi_a^*, \D_j^{*,I}\rangle \lambda_j \quad \forall I\in \cA_\zeta^{\min}. 
\end{equation} 
We have
\begin{equation}\label{eqn:u-p-lambda} 
U_j^{\tT} =\prod_{a=1}^{\boldsymbol{\kappa}}(P_a^{\tT})^{Q_j^a}  \cdot \Lambda_j^{-1} \in \Pic_{\tT}(\sX_\zeta), \quad 
u_j^{\tT} = \sum_{a=1}^{\boldsymbol{\kappa}} Q_j^a p_a^{\tT}-\lambda_j  \in H^2_{\tT}(X_\zeta;\bQ). 
\end{equation}
 
 Under $K_{\tT}(\sX_\zeta) \lra K(\sX_\zeta)$, $U_j^{\tT}$, $P_a^{\tT}$, and $\Lambda_j$ are mapped
 to $U_j$, $P_a$, and $1$, respectively; under $H^2_{\tT}(X_\zeta;\bQ)\to H^2(X_\zeta;\bQ)$, 
 $u^{\tT}_j$, $p_a^{\tT} $, and $\lambda_j$ are mapped to $u_j$, $p_a$, and $0$, respectively. Our definitions of  $u^{\tT}_j$, $U^{\tT}_j$, $p^{\tT}_a$, $P^{\tT}_a$, $\lambda_j$, 
$\Lambda_j$  are consistent with the convention in Givental's papers \cite{GiV, GiVI} on permutation-equivariant quantum K-theory of toric manifolds.

\subsection{The inertia stack} 

\subsubsection{The inertia stack of a general algebraic stack}  
Given a general algebraic stack $\cX$, the {\em inertia  stack}  $\cI\cX$ of $\cX$ is the fiber product
$$
\xymatrix{
\cI\cX \ar[r]\ar[d] & \cX \ar[d]^\Delta \\
\cX  \ar[r]^{\Delta} \ar[r]^\Delta & \cX\times \cX
}
$$
where $\Delta:\cX\to \cX\times \cX$ is the diagonal morphism. $\cI\cX$  is an algebraic stack, and in particular a groupoid. An object in the groupoid 
$\cI\cX$ is a pair $(x,g)$ where $x$ is an object in the groupoid $\cX$ and $k$ is an element in the automorphism 
group $\Aut_{\cX}(x) =\Hom_{\cX}(x,x)$ of $x$.  Morphisms between two objects in $\cI\cX$ are
$$
\Hom_{\cI\cX} ((x_1,g_1), (x_2,g_2)) = \{ h \in \Hom_{\cX}(x_1,x_2) : h\circ g_1 = g_2 \circ h\}. 
$$
The map $(x,g) \mapsto (x,g^{-1})$, where $(x,g)$ is an object in $\cI\cX$, defines an involution 
$$
\inv: \cI\cX \to \cI\cX. 
$$

\subsubsection{The inertia stack of a quotient stack} 
If $\cX=[U/G]$ is a quotient stack, where $U$ is a scheme and $G$ is an algebraic group, then the inertia stack
is also a quotient stack:
$$
\cI\cX = [ IU/G]
$$
where $IU :=\{ (x,g)\in U\times G \mid g \cdot x = x\}$ is a closed subscheme of $U\times G$, 
and the $G$-action on $IU$ is given by 
$$
h\cdot (x,g) =  (h\cdot x, h g h^{-1}),\quad \text{where }h\in G \text{ and } (x,g) \in IU. 
$$ 
In particular, if $G$ is abelian then the action is given by $h\cdot (x,g)= (h\cdot x, g)$. 

The involution $\inv: \cI\cX \to \cI\cX$ is induced by the $G$-equivariant
involution $IU\to IU$ given by $(x,g)\mapsto (x,g^{-1})$. 

\subsubsection{The inertial stack of the toric orbifold $\sX_\zeta$} 
Let $\sX_\zeta = [U_\zeta/G]$ be  as in  Section \ref{sec:anticones}. To describe its inertia stack $\cI\sX_\zeta$ more explicitly, we introduce some definitions. 
Given $\sigma =\sigma_I \in \Sigma_\zeta$, where $I\subset \{1,\ldots, n+{\boldsymbol{\kappa}}\}$,  define
\begin{equation}
\Boxs := \Big\{  v\in N  : v=\sum_{i\in I}  c_i v_i, \  0\leq c_i <1 \Big\}
\end{equation} 
and
\begin{equation}
\mathrm{Box}'(\sigma) := \Big\{  v\in N  : v=\sum_{i\in I} c_i v_i, \  0 < c_i <1 \Big\}.
\end{equation} 
Define 
\begin{equation}
\mathrm{Box}(\bSi_\zeta) :=  \bigcup_{\sigma\in \Sigma_\zeta}\Boxs =\bigcup_{\sigma\in \Sigma_\zeta(n)} \Boxs.  
\end{equation}
which is a finite set.  For any $v\in \mathrm{Box}(\bSi_\zeta)$ there exists a unique $\sigma \in \Sigma_\zeta$ such that $v\in \mathrm{Box}'(\sigma)$. Therefore,
\begin{equation} \label{eqn:union}
\mathrm{Box}(\bSi_\zeta) = \bigcup_{\si\in \Sigma_\zeta} \mathrm{Box}'(\sigma),
\end{equation} 
where the right hand side of \eqref{eqn:union} is a disjoint union.  Given any $\sigma= \sigma_I \in \Sigma_\zeta$, where $I'\in \cA_\zeta$, define
\begin{equation}\label{eqn:age}
\age(v) =\sum_{i\in I} c_i \in \bQ \quad\text{ if } v =\sum_{i\in I} c_i v_i  \in \mathrm{Box}'(\sigma).
\end{equation} 

Suppose that $\sigma  =\sigma_I \in \Sigma_\zeta$ where $I'\in \cA_\zeta$. There is a bijection
$\Boxs \lra G_\sigma$ given by 
\begin{equation}\label{eqn:kv}
v= \sum_{i\in I} c_i v_i \mapsto   g(v) =(a_1, \cdots, a_{n+{\boldsymbol{\kappa}}})\in G_\sigma \subset G\subset \tT\simeq (\bC^*)^{n+{\boldsymbol{\kappa}}} 
\; \text{ where } \;
a_i = \begin{cases}
e^{2\pi\sqrt{-1} c_i}, & i\in I,\\
1, & i\in I'.
\end{cases} 
\end{equation}
The map $v\mapsto g(v)$ defines a bijection 
$$
\mathrm{Box}(\bSi_\zeta)  =\bigcup_{\sigma\in \Sigma_\zeta} \Boxs 
\lra  \bigcup_{\sigma\in \Sigma_\zeta} G_\sigma =\{ g \in G: (U_\zeta)^g \text{ is non-empty.}\},
$$ 
where $(U_\zeta)^g =\{ x \in U_\zeta: g\cdot x = x\}$. 
We have 
\begin{equation} \label{eqn:IU} 
IU_\zeta = \{(x,g) \in U_\zeta \times G: g\cdot x =x \} = \bigcup_{v\in \mathrm{Box}(\bSi_\zeta) }  (U_\zeta)^{g(v)} \times \{ g(v)\}. 
\end{equation} 
The above union is a disjoint union of connected components. 
$$
I\sX_\zeta  =  [IU_\zeta/G] = \bigcup_{v\in \mathrm{Box}(\bSi_\zeta)}  \sX_{\zeta,v} \quad \text{where } 
 \sX_{\zeta, v}  \simeq [ (U_\zeta)^{g(v)}  /G]. 
$$
In particular, $g(0) =1$ is the identity element of $G$ and $\sX_{\zeta,0}\simeq  \sX_\zeta$. 
There is an involution $\inv: \mathrm{Box}(\bSi_\zeta) \to \mathrm{Box}(\bSi_\zeta)$ characterized by 
$g(\inv(v)) = g(v)^{-1} \in G$.  The involution $\inv: \cI\sX_\zeta\lra \cI\sX_\zeta$ maps  $\sX_{\zeta,v}$ isomorphically to $\sX_{\zeta,\inv(v)}$. Observe that
$$
\age(v)  +\age(\inv(v))  +\dim \sX_{\zeta,v} = n =\dim \sX_\zeta. 
$$

Let 
\begin{equation}\label{eqn:zeta}
\eta = (e^{\pi\sqrt{-1} q_1},\ldots, e^{\pi\sqrt{-1} q_{n+{\boldsymbol{\kappa}}}}) \in \bC_R^*.
\end{equation} 
Then  $\eta^2 = J = (e^{2\pi\sqrt{-1} q_1},\ldots, e^{2\pi\sqrt{-1} q_{n+{\boldsymbol{\kappa}}}})$ and $W(\eta \cdot x) = -W(x)$. 
Let $\inv_R: \cI\sX_\zeta\to \cI\sX_\zeta$ be the map induced by the map $IU_\zeta\to IU_\zeta$ given by 
$(x,g)\mapsto (\eta\cdot x, g^{-1})$. Then 
$\inv_R$ maps $\sX_{\zeta,v}$ isomorphically to $\sX_{\zeta, \inv(v)}$. Note that in general
$\inv_R\circ \inv_R$ is not the identity map, i.e.,  $\inv_R$ is not an involution.

\subsection{A-model GLSM state spaces} \label{sec:state} 
Let $M$ be a large positive number such that the real part of any critical values of $W$ is less than $M$, and define
$\bw_\zeta^\infty := (\mathfrak{Re} W)^{-1}(M, \infty) \subset \sX_\zeta$.  As a graded vector space over $\bQ$, the rational
GLSM state space of $(V,G, \bC^*_R, W, \zeta)$ is
\begin{equation}
\cH_{\bw,\bQ}  = \bigoplus_{v\in \mathrm{Box}(\bSi_\zeta) } H^*(\sX_{\zeta,v}, \bw_{\zeta,v}^\infty;\bQ)[2 \left(\age(v)-\hq\right)].
\end{equation}
where $\bw_{\zeta,v} = \bw_\zeta \big|_{\sX_{\zeta,v}}$.  In particular,
$\cH_{\bw,\bQ} \cong H^*(\cI\sX_\zeta;\bQ)$ as a vector space over $\bQ$. 
Note that
$\inv_R$ maps $(\sX_{\zeta,v}, \bw_{\zeta,v})$ diffeomorphically to $(\sX_{\zeta,\inv(v)}, -\bw_{\zeta,\inv(v)})$  and induces an isomorphism
\begin{equation}\label{eqn:inv-w}
\inv_R^*: H^*(\sX_{\zeta,\inv(v)}, -\bw_{\zeta,\inv(v)}^\infty;\bQ)\stackrel{\cong}{\lra} H^*(\sX_{\zeta,v}, \bw_{\zeta,v}^\infty;\bQ). 
\end{equation}
 By \cite[Corollary 4.3]{FK}, $\mathrm{Crit}(\bw_{\zeta,v}) \subset \sZ_\zeta= \mathrm{Crit}(\bw_\zeta)$. If $\sZ_\zeta$ is proper then
there is a nondegenerate pairing 
\begin{equation} \label{eqn:pairing-w}
H^*(\sX_{\zeta,v},\bw_{\zeta,v}^\infty;\bQ) \times H^*(\sX_{\zeta,v}, -\bw_{\zeta,v}^\infty;\bQ)\to  \bQ. 
\end{equation}
for all $v\in \mathrm{Box}(\bSi_\zeta)$. Combining \eqref{eqn:inv-w} and \eqref{eqn:pairing-w}, we obtain a nondegenerate pairing
$$
H^*(\sX_{\zeta,v},\bw_{\zeta,v}^\infty ;\bQ)\times H^*(\sX_{\zeta,\inv(v)},  \bw_{\zeta,\inv(v)}^\infty;\bQ) \to \bQ
$$
for all $v\in \mathrm{Box}(\bSi_\zeta)$, thus a non-degenerate pairing $( \  , \   )_{\bw}: \cH_{\bw,\bQ}\times \cH_{\bw,\bQ} \to \bQ$. 

As a graded vector space over $\bC$, the GLSM state space of $(V,G,\bC^*_R, W,\zeta)$ is
$$
\cH_{\bw} = \bigoplus_{v\in \mathrm{Box}(\bSi_\zeta)} \bH^*\left(\sX_{\zeta,v}, (\Omega^\bullet_{\sX_{\zeta,v}}, d\bw_{\zeta,v})\right)[2(\age(v)-\hq)]
$$
where 
$$
\bH^*\left(\sX_{\zeta,v}, (\Omega^\bullet_{\sX_{\zeta,v}}, d\bw_{\zeta,v})\right) \simeq
H^*(\sX_{\zeta,v}, \bw_{\zeta,v}^\infty;\bC).
$$
Therefore, $\cH_{\bw} \simeq \cH_{\bw,\bQ}\otimes_{\bQ}\bC$ as a graded vector space over $\bC$.

We say $v\in \mathrm{Box}(\bSi_\zeta)$ is {\em narrow} if $\sX_{\zeta,v}$ is compact. If $v$ is narrow then $\bw_{\zeta,v}$ is constant  and
$\bw_{\zeta,v}^\infty$ is empty, so 
$$
H^*(\sX_{\zeta,v},\bw^\infty_{\zeta,v};\bC) = H^*(\sX_{\zeta,v};\bC).
$$
In this case, we call the above vector space a narrow sector.  We say $v$ is broad if $v$ is not narrow, and call
$$
H^*(\sX_{\zeta,v},\bw^\infty_{\zeta,v};\bC)
$$
a broad sector.  

We also consider a closely related GLSM $(V,G,\bC_R^*, 0, \zeta)$ obtained by replacing the superpotental $W$ by zero.  
As a graded vector space over $\bC$, the  GLSM state space of $(V,G, \bC^*_R, 0, \zeta)$ is
\begin{equation}
\cH  = \bigoplus_{v\in \mathrm{Box}(\bSi_\zeta) } H^*(\sX_{\zeta,v};\bC)[2 \left(\age(v)- \hq \right) ].
\end{equation}
Note that $\inv$ and $\inv_R$ are homotopic as maps from $\sX_{\zeta,v}$ to $\sX_{\zeta, \inv(v)}$, so 
$\inv_R^*=\inv^*: H^*(\sX_{\zeta,\inv(v)};\bC)\to H^*(\sX_{\zeta,v};\bC)$.

The action of $\tT$ on $V$ commutes with the action of its subgroup $G$ and $\bC^*_R$, and preserves the zero superpotential $0$
and the semistable locus $V_G^{ss}(\zeta)$. 
We define the  $\tT$-equivariant state space $\cH_{\tT}$ of $(V,G,\bC^*_R,0,\zeta)$ as follows. As a graded vector space over $\bC$, 
\begin{equation}\label{eqn:sHT} 
\cH_{\tT} =\bigoplus_{v\in \mathrm{Box}(\bSi_\zeta)} H^*_{\tT}(\sX_{\zeta,v};\bC)[2 \left(\age(v)-\hq \right) ]
\end{equation} 
where each $H^*_{\tT}(\sX_{\zeta,v};\bC)$ is a module over $H^*_{\tT}(\bullet;\bC) = H^*(B\tT;\bC) =\bC[\lambda_1,\ldots, \lambda_{n+\boldsymbol{\kappa}}]$.
Let $\bC(\lambda):=\bC(\lambda_1,\ldots, \lambda)$ be the fractional field of $\bC[\lambda]:=\bC[\lambda_1,\ldots, \lambda_{n+\boldsymbol{\kappa}}]$.
There is a non-degenerate pairing
$$
\cH_{\tT}\otimes_{\bC[\lambda]} \bC(\lambda)  \times \cH_{\tT}\otimes_{\bC[\lambda]} \bC(\lambda)\to \bC(\lambda). 
$$

\section{Categories of B-branes and K-theories}

Given an abelian GLSM, we consider several versions
of the category of B-branes and the A-model state space.
\medskip 

\begin{tabular}{|c|c|c|c|} 
\hline 
dg category  & category of B-branes & K-theory & A-model state space \\ [2pt]
  \hline
$\MF(\sX_\zeta,\bw_\zeta)$ & $D\MF(\sX_\zeta,\bw_\zeta) \simeq D_\Sg(\sX_{\zeta,0})$  & $K(D\MF(\sX_\zeta,\bw_\zeta))$ & $\cH_{\bw}   =  \displaystyle{ \bigoplus_{v\in \mathrm{Box}(\bSi_\zeta)}H^*(\sX_{\zeta,v},\bw_{\zeta,v}^{\infty}) }$ 
  \\[3pt]
  \hline
$\Perf_{\tT}(\sX_\zeta)$ & $D^b_{\tT}(\sX_\zeta)$ & $K_{\tT}(\sX_\zeta)$  &  
$\cH_{\tT}= \displaystyle{ \bigoplus_{v\in \mathrm{Box}(\bSi_\zeta)} H^*_{\tT}(\sX_{\zeta,v}) }$
  \\ [3pt]
\hline 
$\Perf(\sX_\zeta)$ & $D^b(\sX_\zeta)$ & $K(\sX_\zeta)$  &  
$\cH= \displaystyle{ \bigoplus_{v\in \mathrm{Box}(\bSi_\zeta)} H^*(\sX_{\zeta,v}) }$  \\ [3pt]
\hline
& $D^b_c(\sX_\zeta)$ & $K_c(\sX_\zeta)$ &  $\cH_c= \displaystyle{ \bigoplus_{v\in \mathrm{Box}(\bSi_\zeta)} H_c^*(\sX_{\zeta,v}) }$ \\ [3pt]%
 \hline
\end{tabular} 

\smallskip

In each version, the  Chern character sends the K-theory class of a B-brane to an element in the A-model state space. 
$$
\begin{array}{llll}
K_{\bw} = K(D\MF(\sX_\zeta,\bw_\zeta)) & \stackrel{\ch_{\bw}}{\lra} \quad &  \displaystyle{ \cH_{\bw} := \bigoplus_{v\in \mathrm{Box}(\bSi_\zeta)} H_{\bw,v} }, & H_{\bw,v} = H^*(\sX_{\zeta,v}, w_{\zeta,v};\bC). \\
K_{\tT} = K_{\tT}(\sX_\zeta)     &\stackrel{\ch_{\tT}}{\lra}  \quad & \cH_{\tT} = \displaystyle{ \bigoplus_{v\in \mathrm{Box}(\bSi_\zeta)} H_{\tT,v} }, &  H_{\tT,v}= H^*_{\tT}(\sX_{\zeta,v};\bC). \\
K= K(\sX_\zeta)                & \stackrel{\ch}{\lra}   \quad & \cH =  \displaystyle{ \bigoplus_{v\in \mathrm{Box}(\bSi_\zeta)} H_v} , & H_v:= H^*(\sX_{\zeta,v};\bC).   \\
 K_c = K_c(\sX_\zeta)         & \stackrel{\ch_c}{\lra} \quad  & \cH_c =  \displaystyle{ \bigoplus_{v\in \mathrm{Box}(\bSi_\zeta)} H_{c,v} }, & H_{c,v} = H^*_c(\sX_{\zeta,v};\bC).  
 \end{array}
$$

$K_{\tT}$ is a commutative ring with unity and an algebra over $K_{\tT}(\bullet) = K(B\tT) = \bZ[\Lambda_1^{\pm},\ldots, \Lambda_{n+{\boldsymbol{\kappa}}}^{\pm}]$, and
$K$ is a commutative ring with unity and an algebra over $K(\bullet)=\bZ$.  There is a surjective ring homomorphism $K_{\tT}\to K$.  $K_c$ and $K_{\bw}$ are 
modules over the ring $K$. There is a map $K_c\to K$ which is a morphism of $K$-modules; the image $K_{\ct}$ is an ideal in the ring $K$.
Taking Euler characteristic defines non-degenerate pairings:
$$
K_{\tT} \times K_{\tT}\to \bQ(\Lambda_1,\ldots,\Lambda_{n+{\boldsymbol{\kappa}}}),\quad K\times K_c \to \bZ, \quad \quad K_{\bw}\times K_{\bw}\to \bZ. 
$$

Fix $v\in \mathrm{Box}(\bSi_\zeta)$. $H_{\tT,v}$ is a commutative ring with unity and an algebra over $H^*_{\tT}(\bullet) = H^*(B\tT) 
= \bC[\lambda_1,\ldots, \lambda_{n+{\boldsymbol{\kappa}}}]$, and
$H_v$ is a commutative ring with unity and an algebra over $H^*(\bullet)=\bC$.  There is a surjective ring homomorphism $H_{\tT,v}\to H_v$.  $H_{c,v}$ and $H_{w,v}$ are 
modules over the ring $H_v$. There is a map $H_{c,v}  \to H_v$ which is a morphism of $H_v$-modules; the image $H_{\ct,v}$ is an ideal in the ring $H_v$.  We have. non-degenerate pairings:
$$
H_{\tT,v} \times H_{\tT, \inv(v)} \to \bC(\lambda_1,\ldots,\lambda_{n+{\boldsymbol{\kappa}}}),\quad
H_v \times H_{c,\inv(v)} \to \bC, \quad H_{\bw,v}\times H_{\bw,\inv(v)} \to \bC.  
$$

\section{The Higgs Branch} \label{sec:Higgs} 

\subsection{A mathematical theory of GLSM: an overview}  
Let  $(V,G, \bC_R^*, W, \zeta)$ be the input date of a GLSM. 
The first four components $(V,G,\bC_R^*,W)$ give rise to the following diagram:
$$
\xymatrix{
V\ar[dr]^W \ar[d] & \\
[V/G] \ar[r]^{\bw} \ar[d] \ar@{}[dr]|-{\square}  & \bC \ar[d] \\
[V/\Gamma] \ar[r]^{\hat{\bw}} \ar[d]  & [\bC/\bC^*_\omega] \ar[d]  \\
B\Gamma=[\bullet/\Gamma] \ar[r]^{B\chi} & B\bC^*_\omega =[\bullet/\bC^*_\omega]
}
$$
where the middle square is Cartesian, the upper triangle and the lower square are commutative, and the bottom right arrow $B\chi: B\Gamma\to B\bC^*_\omega$ is induced by the group homomorphism $\chi: \Gamma \to \bC^*_\omega$. A Landau-Ginzburg (LG) quasimap 
to $(V,G, \bC^*_R,W,\zeta)$  is a birational map  from an orbicurve $\cC$ to  $[V^{ss}_G(\zeta)/\Gamma]$ which extends to a representable morphism $f: \cC\to [V/\Gamma]$  
of smooth Artin stacks and satisfies certain stability conditions   such that the following diagram commutes: 
$$
\xymatrix{
& [V/\Gamma] \ar[d]^\pi\\
\cC \ar[ur]^f \ar[r]^P \ar[dr]_{\omega^{\log}_\cC} & B\Gamma  \ar[d]^{B\chi} \\
& B\bC^*_\omega
}
$$

Recall that $B\Gamma$ is the classifying space of principal $\Gamma$-bundles, and $[V/\Gamma]$ is the classifying
space of a principal $\Gamma$-bundle $P\to \cC$ together with a section $u: \cC \to P\times_\Gamma V$, where 
$P\times_\Gamma V\to \cC$ is the rank $n+{\boldsymbol{\kappa}}$ vector bundle associated to the representation $\Gamma \to \GL(V)$. 
A section $u: \cC\to P\times_\Gamma V$ is equivalent to a $\Gamma$-equivariant map $\tilde{f}: P\to V$. 
More explicitly, we have the following cartesian diagram
$$
\xymatrix{
P \ar[r] \ar[d] & \bullet \ar[d]\\
\cC = P/\Gamma  \ar[r]^{\pi\circ f} & B\Gamma =[\bullet/\Gamma] 
}
$$
Let $\pr_1: P\times V\lra P$ and $\pr_2: P\times V\to V$ be the projection to the first and second factors, respectively. 
We have a commutative diagram
$$
\xymatrix{
P\times V \ar[d]^{\pr_1} \ar[r]^{\pr_2} & V\ar[d] \\
P\ar[r]  \ar@/^/[u]^{(id_P,\tilde{f)}} \ar[ur]^{ \quad\quad \tilde{f}} & \bullet
}
$$
where all the arrows are $\Gamma$-equivariant. Taking the quotient of the above diagram by the $\Gamma$-action, we obtain the
following commutative diagram
$$ 
\xymatrix{
P\times_\Gamma V \ar[d] \ar[r] &[V/\Gamma]\ar[d]^\pi \\
\cC \ar[r]^{\pi\circ f}  \ar@/^/[u]^{u} \ar[ur]^f  & B\Gamma = [\bullet/\Gamma] 
}
$$

\subsection{Twisted curves and their moduli} \label{sec:twisted}
We follow the presentation of  \cite{AbVi, AGV} on twisted curves. 
A genus-$g$, $\ell$-pointed twisted prestable curve is a connected proper one-dimensional DM stack $\cC$ 
together with $\ell$ disjoint zero-dimensional integral closed substacks $\fz_1,\ldots,\fz_\ell \subset \cC$, such that
\begin{enumerate}
\item[(i)]  $\cC$ is \'{e}tale locally a nodal curve; 
\item[(ii)] formally locally near a node, $\cC$ is isomorphic
to the quotient stack
$$
[\Spec(\bC[x,y]/(xy))/\mu_r],\quad\text{where}\ \eta\cdot (x,y)= (\eta x, \eta^{-1}y),\, \eta\in \mu_r;
$$ 
\item[(iii)] each marking $\fz_i\subset \cC$ is 
contained in the smooth locus of $\cC$;
\item[(iv)] $\cC$ is a scheme away from the markings and the singular points of $\cC$;
the coarse moduli space $C$ of $\cC$ is a nodal curve of arithmetic genus $g$.
\end{enumerate}

Let $\pi:\cC\to C$ be the coarse moduli morphism; let $z_i=\pi(\fz_i)$.
The resulting $(C,z_1,\ldots, z_\ell)$ is a genus-$g$, $\ell$-pointed  prestable curve.
We say $(\cC,\fz_1,\ldots, \fz_\ell)$ is stable if $(C,z_1,\ldots,z_\ell)$ is stable. 
The moduli $\fM^\tw_{g,\ell}$ of genus-$g$, $\ell$-pointed twisted prestable curves is a smooth Artin stack of 
dimension $3g-3+\ell$.  For $i=1,\ldots,\ell$, let $\bL_i$ be the line bundle on $\fM_{g,\ell}^{\tw}$ whose fiber
over $(\cC,\fz_1,\ldots, \fz_\ell)$ is the $T^*_{z_j}C$, the cotangent line to the coarse curve $C$ at $z_i$.

The space of  infinitesimal automorphisms of $(\cC,\fz_1,\ldots,\fz_\ell)$ is 
$$
\Ext^0_{\cO_{\cC}}(\Omega_{\cC}(\fz_1+\cdots +\fz_\ell), \cO_{\cC}),
$$
while the space of infinitesimal deformations of $(\cC,\fz_1,\ldots,\fz_\ell)$ is
$$
\Ext^1_{\cO_{\cC}}(\Omega_{\cC}(\fz_1+\cdots+ \fz_\ell), \cO_{\cC}). 
$$
$(\cC,\fz_1,\ldots \fz_\ell)$ is stable if and only if $\Ext^0_{\cO_{\cC}}(\Omega_{\cC}(\fz_1 + \cdots + \fz_\ell), \cO_{\cC})=0$.

\subsection{Line bundles over a twisted curve}
Let $(\cC,\fz_1,\ldots, \fz_\ell)$ be a genus-$g$, $\ell$-pointed twisted prestable curve, where $\fz_i= B \mu_{r_i}$. A line bundle $\cL$ on $\cC$ defines a  morphism
$\cC \longrightarrow B \bC^*$ such that we have the following Cartesian diagram
$$
\xymatrix{
\cL\ar[r]\ar[d]\ar@{}[dr]|-{\square} &  [\bC/\bC^*]  \ar[d]\\
\cC \ar[r]  & [\bullet/\bC]=B \bC^*
} 
$$
where  $[\bC/\bC^*] \to B\bC^*$ is the universal line bundle over the classifying space $B\bC^*$ of principal $\bC^*$-bundles. 

Given any line bundle $\cL$ over $\cC$,  there exists a positive integer $m$ such that $\cL^{\otimes m} = \pi^*M$ for some line bundle $M$ over the coarse moduli space $C$. Define
$$
\deg \cL = \frac{1}{m}\deg M \in \bQ
$$
\begin{itemize}
\item If $\fz_i$ is a scheme point, we define $\age_{\fz_i}(\cL)=0$.  
\item  If $\fz_i= B\mu_{r_i}$, where $r_i>1$, then the restriction $\cL_{\fz_i}$ of $\cL$ to $\fz_i$ is an element in $\Pic(B\mu_{r_i})  =  \Hom(\mu_{r_i},\bC^*)\cong \bZ/r_i \bZ$.  There is a unique $a_i \in \{0,1,\ldots, r_i-1 \}$ such that
$$
\cL_{\fz_i}\cong (T_{\fz_i}\cC)^{\otimes a_i}.
$$
Define $\age_{\fz_i}(\cL) = \displaystyle{ \frac{a_i}{r_i} }\in (0,1)\cap \bQ$. 
\end{itemize}

There is a unique line bundle $L$ on the coarse moduli $C$ such that
$$
\cL \simeq \pi^*L \otimes \cO_{\cC}(\sum_{i=1}^\ell a_i \fz_i). 
$$
where
$$
\deg\Big(\cO_{\cC}\big(\sum_{i=1}^\ell  a_i \fz_i\big) \Big) =\sum_{i=1}^\ell \age_{\fz_i}(\cL), \quad \deg(\pi^*L) = \deg L \in \bZ.
$$
So
$$
\deg \cL -\sum_{i=1}^\ell  \age_{\fz_i}(\cL) \in \bZ. 
$$
For $i=0,1$,  $h^i(\cC,\cL) = h^i(C,L)$, so
\begin{equation}\label{eqn:KRR}
\chi (\cC,\cL) = \chi (C,L) = \deg\cL +1-g -\sum_{j=1}^\ell  \age_{\fz_j}(\cL)
\end{equation} 
which is a special case of  Kawasaki's orbifold version of Riemann-Roch theorem \cite{Ka79}.

The space of infinitesimal automorphisms of $\cL$ on a fixed twisted prestable curve $\cC$ is
$$
\Ext^0_{\cO_\cC}(\cL, \cL) \simeq H^0(C,\cO_C).
$$
The space of infinitesimal deformations of $\cL$ on a fixed twisted prestable curved $\cC$ is
$$
\Ext^1_{\cO_\cC}(\cL, \cL) \simeq H^1(C,\cO_C).
$$

\subsection{Universal moduli of principal  $\Gamma$-bundles} 

Let $\fM_{g,\ell}(B \Gamma)$ denote the moduli of pairs $( (\cC,\fz_1,\ldots,\fz_\ell), P)$, where
\begin{itemize}
\item  $(\cC, \fz_1,\ldots, \fz_\ell)$ is a genus-$g$, $\ell$-pointed twisted prestable curve;
\item  $P$ is a principal $\Gamma$-bundle over $\cC$ which corresponds
to a representable morphism $\cC\lra B\Gamma$.
\end{itemize}
Then $\fM_{g,\ell}(B\Gamma)$ is a smooth Artin stack.  Note that $\fM_{g,\ell}(B\Gamma)$ can be identified with the Hom-stack  
$$
\Hom_{\fM} (\fC_{\fM}, \cB\Gamma \times \fM)
$$
where $\fM=\fM_{g,\ell}^\tw$.  Forgetting the  principal bundle $P$ defines a (non-representable) morphism of smooth Artin stacks
$$
\pi_{\fD/\fM}: \fD := \fM_{g,\ell}(\cB \Gamma)\lra  \fM
$$
which is smooth of relative dimension $\dim \Gamma (g-1) = ({\boldsymbol{\kappa}}+1)(g-1)$. So $\fM_{g,\ell}(\cB \Gamma)$  is a smooth Artin stack of dimension
$$
3g-3 + \ell + ({\boldsymbol{\kappa}}+1)(g-1) = (4+{\boldsymbol{\kappa}})(g-1) + \ell = (\dim \cB \Gamma -3)(1-g) + \ell
$$
where $\dim  B \Gamma = -\dim \Gamma = -{\boldsymbol{\kappa}} -1$.   Let $\pi_{\fD}: \fC_{\fD} \to \fD := \fM_{g,\ell}(\cB\Gamma)$ be the universal curve, let
$f_{\fD}: \fC_{\fD}  \to B\Gamma$ be the universal map, and let $\fP_{\fD}\to  \fC_{\fD}$ be the universal principal $\Gamma$-bundle.  We have the following
cartesian diagrams:
$$
\xymatrix{
\fC_{\fD} \ar[r] \ar[d]_{\pi_{\fD}}  \ar@{}[dr]|-{\square}& \fC_{\fM} \ar[d]^{\pi_{\fM}} \\
\fD \ar[r]^{\pi_{\fD/\fM} } & \fM
}
\hspace{1cm} 
\xymatrix{ 
\fP_{\fD} \ar[r] \ar[d]   \ar@{}[dr]|-{\square}& \bullet  \ar[d]\\
\fC_{\fD} \ar[r]^{f_{\fD}\quad}  & B\Gamma
}
$$ 

Let $\tbL:= \Hom(\bC^*, \Gamma) \cong \bZ^{{\boldsymbol{\kappa}}+1}$ be the cocharacter lattice of $\Gamma$. Its dual lattice
$\tbL^\vee =\Hom(\Gamma,\bC^*)$ is the character lattice of $\Gamma$.  A principal $\Gamma$-bundle $P\lra \cC$ determines a map 
$\cC = P/\Gamma  \to  B\Gamma  = [\bullet/\Gamma]$
whose degree is an element 
$$
\beta_\Gamma \in H_2(B\Gamma;\bQ) = \tbL_\bQ
$$
characterized by the following property. For any  $\Gamma$-character $\lambda \in \Hom(\Gamma,\bC^*) =\tbL^\vee = H^2(B\Gamma ;\bZ)$, 
let $P\times_\lambda \bC \to \bC$ be the line bundle associated to the representation $\lambda:\Gamma\to \bC^*=\GL(1)$. Then
$$
\int_{\beta_\Gamma} \lambda=\deg(P\times_{\lambda}\bC) = \int_{[\cC]} c_1(P\times_\lambda \bC)   \in \bQ, 
$$
where
\begin{itemize}
\item $\displaystyle{ \int_{\beta_\Gamma} \lambda }$ denotes the natural pairing between $\beta_\Gamma \in \tbL_\bQ = H_2(B\Gamma;\bQ)$ and $\lambda \in \tbL^\vee \subset \tbL^\vee_\bQ = H^2(B\Gamma;\bQ)$, and 
\item $\displaystyle{ \int_{[\cC]} c_1(P\times_\lambda \bC)}$ denotes the natural pairing between $[\cC] \in H_2(\cC;\bQ)$ and
$c_1(P\times_\lambda\bC) \in H^2(\cC;\bQ)$.
\end{itemize} 
In other words, $\beta_\Gamma \in H_2(B\Gamma;\bQ)$  is the image of $[\cC]\in H_2(\cC;\bQ)$ under
$P_*: H_2(\cC;\bQ) \lra H_2( B\Gamma;\bQ)$.

The monodromy of $P$ at $\fz_j = \cB \mu_{r_j}$ is an element $\gamma_j \in \Gamma$ of order $r_j$. The subset
$$
\tbL_\bQ/\tbL \cong (\bQ/\bZ)^{{\boldsymbol{\kappa}}+1} \subset \tbL_\bC/\tbL  \cong  (\bC/\bZ)^{{\boldsymbol{\kappa}}+1} = (\bC^*)^{{\boldsymbol{\kappa}}+1}
$$
can be identified with 
$$
\{ \gamma \in \Gamma :  \ord(\gamma) \textup{ is finite.} \}.
$$
The monodromies $(\gamma_1,\ldots,\gamma_\ell) \in (\tbL_{\bQ}/\tbL)^\ell$ and the degree $\beta_\Gamma \in \tbL_\bQ$ satisfy the following compatibility condition:
$$
\sum_{j=1}^\ell  \gamma_j = \beta_\Gamma + \tbL \in \tbL_{\bQ}/\tbL. 
$$
Let $\fM_{g, \vec{\gamma}}(B\Gamma,\beta_\Gamma) \subset \fM_{g,\ell}(B\Gamma)$ be the open and closed substack
of pairs $((\cC,\fz_1,\ldots,\fz_\ell),P)$ with degree $\beta_\Gamma \in \tbL_\bQ$ and monodromies
$\vec{\gamma} =(\gamma_1,\ldots,\gamma_\ell) \in (\tbL_\bQ/\tbL)^\ell$. Then
$$
\fM_{g,\ell}(B\Gamma) =\bigcup_{ \substack{ \vec{\gamma}= (\gamma_1,\ldots,\gamma_\ell) \in  (\tbL_\bQ/\tbL)^\ell  \\ \beta_\Gamma \in \tbL_\bQ ,\;
\sum_{i=1}^\ell \gamma_i =\beta_\Gamma + \tbL} }\fM_{g,\vec{\gamma}}(B\Gamma,\beta_\Gamma). 
$$

\subsection{Universal moduli of $\Gamma$-structures} \label{sec:Gamma-structures} 
Polishchuk-Vaintrob introduced $\Gamma$-structures \cite{PV16} which is an alternative formulation
for  $W$-structures in \cite{FJR11}. 

Given an object  $(\cC,\fz_1,\ldots,\fz_\ell)$ in $\fM_{g,\ell}(\bullet)$, a $\Gamma$-structure on $(\cC,\fz_1,\ldots,\fz_\ell)$ is a pair $(P,\rho)$ where $((\cC,\fz_1,\ldots, \fz_\ell), P)$ is an object in $\fM_{g,\ell}(B\Gamma)$ and
$$
\rho: P\times_{\chi}\bC \stackrel{\cong}{\lra} \omega_\cC^{\log}
$$
is an isomorphism of line bundles on $\cC$. 

Let $\fB_{g,\ell}$ be the moduli space
of triples $((\cC,\fz_1,\ldots, \fz_\ell), P, \rho)$, where   $(\cC,\fz_1,\ldots,\fz_\ell)$ is an object
in $\fM^\tw_{g,\ell}(\bullet)$ and $(P,\rho)$ is a $\Gamma$-structure on $(\cC,\fz_1,\ldots,\fz_\ell)$. 
We have a commutative diagram
$$
\xymatrix{
\fB=\fB_{g,\ell} \ar[r]^{\pi_{\fB/\fD}\quad } \ar[dr]_{\pi_{\fB/\fM}} & \fD := \fM_{g,\ell}(B\Gamma) \ar[d]^{\pi_{\fD/\fM} } \\
&  \fM=\fM_{g,\ell}^{\tw}
}
$$
where $\pi_{\fB/\fD} :\fB\lra \fD$ is given by forgetting $\rho$.  The map $\pi_{\fB/\fM}:\fB \to \fM$ is
smooth of relative dimension $\dim G (g-1) ={\boldsymbol{\kappa}}(g-1)$, so $\fB_{g,\ell}$ is a smooth Artin stack of dimension
$$
3g-3+\ell + {\boldsymbol{\kappa}}(g-1)= (3+{\boldsymbol{\kappa}})(g-1) +\ell = (\dim BG-3)(1-g)+\ell. 
$$

The map  
$$
(\rho_V,\rho_R) : G \times \bC^*_R \cong  (\bC^*)^{{\boldsymbol{\kappa}}+1}  \lra \Gamma \cong (\bC^*)^{{\boldsymbol{\kappa}}+1} \quad (h, t)\mapsto  ht
$$
is a surjective group homomorphism and a covering map of  degree $r = \ord(J)$.  It induces
$$
(\rho_V, \rho_R)_*: H_2(BG;\bZ)\times H_2(B\bC^*_R;\bZ) = \bL \times \left(\bZ[\bP^1]\right) \lra H_2(B\Gamma;\bZ) =\tbL 
$$
which  is an inclusion of lattices of finite index $r$. (We recall that $B\bC^*_R = \bP^\infty$, and we let $[\bP^1] \in H_2(\bP^\infty;\bZ)$ be the class of $\bP^1\subset \bP^\infty$.) Therefore, we obtain an isomorphism
$$
(\rho_V, \rho_R)_*: H_2(BG;\bQ)\times H_2(B\bC^*_R;\bQ) = \bL_\bQ \times \left( \bQ[\bP^1]\right) \stackrel{\cong}{\lra} H_2(B\Gamma;\bQ) =\tbL_\bQ.
$$
Let $(\beta_G, \beta_R) = (\rho_V,\rho_R)_*^{-1}(\beta_\Gamma)$.  Then $\beta_R = \displaystyle{ \frac{2g-2+\ell}{r} [\bP^1] } \in \bQ[\bP^1]= H_2(B\bC_R^*;\bQ)
= H_2(\bP^\infty;\bQ)$.

Let $\fB_{g,\ell}(\beta_G)  \subset \fB_{g,\ell}$ be the open and closed substack
of triples $((\cC,\fz_1,\ldots,\fz_\ell),P,\rho)$ with degree 
$$
\beta_\Gamma=\left(\beta_G, \frac{2g-2+\ell}{r}[\bP^1] \right) \in \bL_\bQ \times \left(\bQ[\bP^1]\right)  \cong H_2(B\Gamma;\bQ).
$$
Then 
$$
\fB_{g,\ell} =\bigcup_{\beta_G\in \bL_\bQ} \fB_{g,\ell}(\beta_G). 
$$
For each $\beta_G$, let $\fB_{g,\vec{\gamma}}(\beta_G) \subset \fB_{g,\ell}(\beta_G)$ be the open and closed substack
of triples $((\cC,\fz_1,\ldots, \fz_\ell),P,\rho)$ with monodromies  $\vec{\gamma} =(\gamma_1,\ldots,\gamma_\ell) \in (\bL_\bQ/\bL)^\ell$. Then
$$
\fB_{g,\ell}(\beta_G) =\bigcup_{ \substack{ \vec{\gamma}= (\gamma_1,\ldots,\gamma_\ell) \in  (\bL_\bQ/\bL)^\ell  \\ 
\sum_{i=1}^\ell \gamma_i =\beta_G + \bL} }\fB_{g, \vec{\gamma} }(\beta_G). 
$$

\subsection{Moduli of sections} \label{sec:sections} 
In this subsection we fix $g,\ell, \beta_G$ and $\vec{\gamma} =(\gamma_1,\ldots,\gamma_\ell)$ and let $\fB=\fB_{g,\vec{\gamma} }(\beta_G)$.

Following \cite[Section 1]{BF97}, we introduce the following definition. Given a coherent sheaf $\cF$ of $\cO_X$-modules on 
an algebraic stack $X$, let $C(\cF):=\Spec_X(\Sym\cF^\vee)$ be the abelian cone associated to $\cF$. In particular, 
when $\cF$ is locally free, $C(\cF) = \tot(\cF)$ is the vector bundle associated to $\cF$. 

Let $\fP_{\fB}\to \fC_{\fB}$ be the universal principal $\Gamma$-bundle over the universal curve $\pi_{\fB}:\fC_{\fB} \to \fB$. 
Recall that $\Gamma$ is a subgroup of the diagonal torus  $\tT\subset \GL(V)$, so 
$$
\cV_{\fB}:= \fP_{\fB}\times_{\Gamma} V =\bigoplus_{i=1}^{n+{\boldsymbol{\kappa}}} \cL_{i,\fB}
$$
where each $\cL_{i,\fB}$ is a line bundle over $\fC_{\fB}$.  Consider the moduli of sections
$$
\fB_{g,\vec{\gamma} }(V,\beta_G)  := C\left(\pi_{\fB *} \cV_{\fB} \right) 
$$
which parametrizes $4$-tuples 
$\left( (\cC,\fz_1,\ldots, \fz_\ell), P,\rho, u \right)$  
where the triple $( (\cC,\fz_1,\ldots, \fz_\ell), P,\rho)$ is an object in 
$\fB(\bullet)$  and 
$$
u =(u_1,\ldots, u_{n+{\boldsymbol{\kappa}}}) \in H^0( \cC,P\times_{\Gamma} V) = H^0(\cC, \bigoplus_{j=1}^{n+{\boldsymbol{\kappa}}} \cL_j)
$$
where  $u_j \in H^0(\cC,\cL_j)$.   Let
$$
\fB_{g,\ell}(V,\beta_G) := C \left(\pi_{\fB_{g,\ell}(\beta_G) *}  \cV_{\fB_{g,\ell}(\beta_G) }\right).
$$
Then
$$
\fB_{g,\ell}(V,\beta_G) =\bigcup_{ \substack{ \vec{\gamma}= (\gamma_1,\ldots,\gamma_\ell) \in  (\bL_\bQ/\bL)^\ell  \\ 
\sum_{i=1}^\ell \gamma_i =\beta_G + \bL} }\fB_{g, \vec{\gamma} }(V, \beta_G). 
$$

Given a fixed triple $((\cC,\fz_1,\ldots, \fz_\ell), P,\rho)$, the space of infinitesimal deformations of the section $u_j$ of the line bundle $\cL_j$ is $H^0(\cC,\cL_j)$ and the space
of obstructions to deforming $u_j$ is $H^1(\cC,\cL_j)$.  We now compute
$$
\chi(\cC,\cL_j) = h^0(\cC,\cL_j) - h^1(\cC,\cL_j). 
$$
The line bundle
$\cL_j$ is of degree 
$$
\langle \D_j,\beta_G\rangle +\frac{q_j}{2}(2g-2+\ell)
$$
and has monodromy $e^{2\pi\sqrt{-1} \age_{\gamma_i}(\D_j)}$ around $\fz_i$, where
$\age_{\gamma_i}(\D_j) \in \bQ\cap [0,1)$ is the unique representative  in $[0,1)$
of the pairing $\langle \D_j, \beta_G\rangle \in \bQ/\bZ$.  Note that
$\age_{\gamma_i}(\D_j) =\age_{\fz_i}\cL_j$. 
We have
$$
\langle \D_j, \beta_G\rangle +\frac{q_j}{2}(2g-2+\ell) -\sum_{i=1}^\ell \age_{\gamma_i}(\D_j) \in \bZ. 
$$
By Kawasaki's orbifold version of the Riemann-Roch theorem \cite{Ka79}, 
$$
\chi(\cC,\cL_j)= \langle \D_j, \beta_G\rangle + \frac{q_j}{2}(2g-2+\ell)  + (1-g) -\sum_{i=1}^\ell \age_{\gamma_i}(\D_j). 
$$
Let
\begin{equation} \label{eqn:c1} 
(c_1)_G(V) =\sum_{j=1}^{n+{\boldsymbol{\kappa}}}\D_j\in \bL^\vee, \quad \age_{\gamma}(V) =\sum_{j=1}^{n+{\boldsymbol{\kappa}}} \age_{\gamma_i}(\D_j) \in \bQ,
\end{equation} 
and recall that $\hq= \frac{1}{2} \sum_{j=1}^{n+{\boldsymbol{\kappa}}}q_j$. Then
$$ 
\chi(\cC, P\times_\Gamma V) =\sum_{j=1}^{n+{\boldsymbol{\kappa}}} \chi(\cC,\cL_j)  
=   \langle (c_1)_G(V), \beta_G\rangle + (n+{\boldsymbol{\kappa}}-2\hq)(1-g)  - \sum_{i=1}^\ell  \left( \age_{\gamma_i}(V)- \hq \right). 
$$ 
There is a map $\pi_{\fS/\fB}: \fS =\fB_{g,\vec{\gamma} }(V,\beta_G) \lra \fB = \fB_{g,\vec{\gamma} }(\beta_G)$ given by 
$((\cC,\fz_1,\ldots,\fz_\ell), P, \rho,u)\mapsto ((\cC,\fz_1,\ldots,\fz_\ell), P, \rho)$, i.e, forgetting the section $u$.  The map 
$\pi_{\fS/\fB}$ is virtually smooth: there is a relative perfect obstruction theory $\bE_{\fS/\fB}$, where
$$
\bE_{\fS/\fB}^\vee =  \pi_{\fS/\fB}^*\bR\pi_{\fB*}\cV_{\fB}.
$$
The  relative virtual dimension of $\pi_{\fS/\fB}$ is 
\begin{equation}
d^\vir_{\fS/\fB} = \langle (c_1)_G(V),\beta_G\rangle +  (n+{\boldsymbol{\kappa}}-2\hq) (1-g) - \sum_{i=1}^\ell \left( \age_{\gamma_i}(V) - \hq \right). 
\end{equation}
Therefore, the moduli of sections $\fS$ is a possibly singular, but virtually smooth Artin stack; it is equipped with a perfect obstruction theory $\bE_{\fS}$
of virtual dimension 
$$
d^\vir_{\fS}  = d^\vir_{\fS/\fB} + \dim \fB =  \langle (c_1)_G(V),\beta_G\rangle +  (n+{\boldsymbol{\kappa}}- 2\hq) (1-g)  - \sum_{i=1}^\ell \left( \age_{\gamma_i}(V) -\hq \right)
+ ({\boldsymbol{\kappa}} + 3)(g-1) +\ell  
$$
which can be rewritten as
\begin{equation}\label{eqn:virdimS}
d^\vir_{\fS} =    \langle (c_1)_G(V), \beta_G\rangle +  (\hat{c}-3)(1-g) +\ell -  \sum_{i=1}^\ell \left( \age_{\gamma_i}(V) - \hq \right)
\end{equation} 
where $\hat{c} = n-2\hq$ is the central charge of the GLSM.  Note that if $\gamma \in \bL_\bQ/\bL$ corresponds to  $v\in \mathrm{Box}(\bSi_\zeta)$ then 
$$
\age_{\gamma}(V) = \age(v). 
$$
The definition of the central charge $\hat{c}$ and the degree shift $2 \left(\age(v) - \hq\right)$  in the definition of the A-model state spaces are motivated by the formula \eqref{eqn:virdimS} of the virtual dimension, which is consistent with \cite[Lemma 6.1.7]{FJR}.

Let $\fP_{\fS} \to \fC_{\fS}$ be the universal principal $\Gamma$-bundle
over the universal curve $\pi_{\fS}:\fC_{\fS} \to \fS$, and let $u_{\fS}: \fC_{\fS} \to  \cV_{\fS}:= \fP_{\fS}\times_{\Gamma} V$ be the universal section. 
We have the following commutative diagram. 
$$
\xymatrix{
\cV_{\fS} \ar[r]\ar[d] \ar@{}[dr]|-{\square}& \cV_{\fB} \ar[d]& \\
\fC_{\fS} \ar[r]\ar[d]^{\pi_{\fS}}   \ar@/^/[u]^{u_{\fS}} \ar@{}[dr]|-{\square}& \fC_{\fB} \ar[r]\ar[d]^{\pi_{\fB}}   \ar@{}[dr]|-{\square} & \fC_{\fM} \ar[d]^{\pi_{\fM}}\\
\fS \ar[r]^{\pi_{\fS/\fB}}  & \fB \ar[r]^{\pi_{\fB/\fM}}   &\fM
}
$$

\subsection{Landau-Ginzburg quasimaps and their moduli}
\begin{definition}[prestable LG quasimaps] 
A prestable genus-$g$, $\ell$-pointed, degree $\beta_G$ Landau-Ginzburg (LG)  quasimap 
to the 5-tuple $\ufX= (V,G,\bC_R^*, W, \zeta)$  is a 4-tuple $\Q = ( (\cC,\fz_1,\ldots, \fz_\ell), P, \rho, u)$, 
which is an object in $\fB_{g,\ell}(V,\beta_G)(\bullet)$ such that the base locus
$$
B(\Q):= u^{-1}(P\times_{\Gamma} V^{us}_G(\zeta))  \subset \cC
$$
of $\Q$ is purely zero-dimensional  and is disjoint  from the marked points and the nodes in $\cC$.  
\end{definition} 
\begin{remark} Note that the $\tT$-action preserves $V^{ss}_G(\zeta)$ and $V^{us}_G(\zeta)$. In particular, $\Gamma$ acts on $V^{us}_G(\zeta)$. 
\end{remark}

Let $LG_{g,\ell}^{\pre}(\ufX,\beta_G)$ be the moduli  of prestable genus-$g$, $\ell$-pointed, degree $\beta_G$ LG quasimaps to $\ufX$. It is an open substack 
of $\fB_{g,\ell}(V,\beta_G)$. There are evaluation maps
$$
\ev_i: LG_{g,\ell}^{\pre}(\ufX,\beta_G) \to I\sX_\zeta =\bigcup_{v\in \mathrm{Box}(\bSi)}\sX_{\zeta,v} , \quad i=1,\ldots, \ell. 
$$
Given $\vec{v}=(v_1,\ldots, v_\ell) \in \mathrm{Box}(\bSi_\zeta)^\ell$, let
$$
LG_{g,\vec{v}} (\ufX,\beta_G) := \bigcap_{i=1}^\ell \ev_i^{-1}\left(\cX_{\zeta,v_i}\right).
$$
Then $LG_{g,\vec{v}}(\ufX,\beta_G)$ is an open substack of $\fB_{g,\vec{\gamma} =(\gamma_1,\ldots,\gamma_\ell)}(V,\beta_G)$, 
where $\gamma_i \in \bL_\bQ/\bL$ corresponds to $v_i\in \mathrm{Box}(\bSi)$. Therefore,  the virtual dimension of
$LG_{g,\vec{v}}(\ufX,\beta_G)$ is 
\begin{equation} \label{eqn:LGvirtual} 
\langle (c_1)_G(V), \beta_G\rangle + (\hat{c}-3)(1-g) +\ell - \sum_{i=1}^\ell \left(\age(v)- \hq\right).
\end{equation}

\begin{definition}[good lift]
 $\tzeta \in \tbL^\vee$ is a  {\em lift} of $\zeta \in \bL^\vee$ if $\zeta$ is the image of $\tzeta$ under 
$$
\tbL^\vee =\Hom(\Gamma,\bC^*) \to \bL^\vee=\Hom(G,\bC^*).
$$
$\tzeta$ is a {\em good lift} of $\zeta$ if $V_\Gamma^{ss}(\tzeta) = V_G^{ss}(\zeta)$. 
\end{definition} 

For any $\tzeta \in \Hom(\Gamma,\bC^*) = \tbL^\vee$, let $\chi^{\tzeta}:\Gamma\to \bC^*$ denote the corresponding $\Gamma$-character, let
$\bfL_{\tzeta} \in \Pic_{\Gamma}(V)$ denote the $\Gamma$-equivariant line bundle on $V$ determined by $\chi^{\tzeta}$
$$
\chi^{\tzeta_1+\tzeta_2} = \chi^{\tzeta_1} \chi^{\tzeta_2},\quad \bfL_{\tzeta_1 + \tzeta_2} = \bfL_{\tzeta_1}\otimes \bfL_{\tzeta_2}. 
$$
A section $s \in H^0(V, \bfL_{\tzeta})^\Gamma$ defines a $\Gamma$-equivariant map $V\to \bC$ which induces a morphism
$$
s: P\times_{\Gamma}V \to P\times_{\chi^{\tzeta}} \bC. 
$$
Given any $u \in H^0(\cC,P\times_\Gamma V)$,  let $u^*s := s\circ u \in H^0(\cC, P\times_{\chi^{\zeta}} \bC)$.   The following definition 
is  \cite[Definition 4.2.10]{FJR} (which is essentially \cite[Definition 7.1.1]{CKM})  in slightly different notation. 

\begin{definition}[length] 
Let $\Q= ((\cC,\fz_1,\ldots, \fz_\ell), P,\rho,u)$ be a prestable LG quasimap to $\ufX=(V,G, \bC_R^*, W, \zeta)$,  let $\tzeta \in \tbL^\vee$ be a good lift of $\zeta\in \bL^\vee$.
The length of a point $y$ in $\cC$ with respect to  $\Q$ and $\tzeta$ is defined to be
\begin{equation}
\ell_y(\Q,\tzeta):= \min\Big\{ \frac{\ord_y(u^*s)}{m} \; \Big| \; s\in H^0(V,\bfL_{m\tzeta} =\bfL_{\tzeta}^ {\otimes m})^\Gamma, m\in \bZ_{>0} \Big\}
\end{equation} 
where $\ord_y(u^*s)$ is the order of vanishing of the section $u^*s \in H^0(\cC, P\times_{\chi^{m\tzeta}}\bC)$ at $y$.
\end{definition}

\begin{definition}[$\epsilon$-stable LG quasimaps]  \label{df:LG} 
Let $\tzeta  \in \Hom(\Gamma,\bC^*)$ be a good lift of $\zeta\in \Hom(G,\bC^*)$, and let $\epsilon$ be a positive rational number. 
A prestable LG quasimap $\Q= ((\cC,\fz_1,\ldots, \fz_\ell), P,\rho,u)$ is {\em $\epsilon$-stable 
with respect to $\tzeta$}  if 
\begin{enumerate}
 \item  $\omega_{\cC}^{\log} \otimes  (P\times_{\tzeta}\bC)^\epsilon \in \Pic(\cC)\otimes_{\bZ}\bQ$ is an ample $\bQ$ line bundle on $\cC$, and
 \item  $\epsilon \ell_y(\Q,\tzeta) \leq 1$ for every $y \in \cC$. 
\end{enumerate} 
\end{definition} 

\begin{remark}
Let $\cC_v$  (respectively $C_v$) be the connected component of the normalization of $\cC$  (respectively $C$) associated to a vertex $v$
in the dual graph of  the coarse moduli  $C$ of $\cC$. Let
$g_v$ be the genus of $C_v$ and let $\ell_v$ be the number of points on $C_v$ mapped to a marked point
or a node under the normalization map, and let $\beta_\Gamma(v) \in H_2(B\Gamma;\bQ) =\tbL_\bQ$ be the degree of  $\cC_v\to \cC \stackrel{P}{\to} B\Gamma$. 
Condition (1) in Definition \ref{df:LG}  is equivalent to the following condition:
$$
2g_v -2 +\ell_v +\epsilon \int_{\beta_\Gamma(v)} \tzeta >0
\quad \text{for all vertex $v$ in the dual graph of $C$}. 
$$
\end{remark}

\begin{remark}
Let $m\in \bZ_{>0}$, $\epsilon \in \bQ_{>0}$, and $\tzeta\in \tbL^\vee$.  Then $\Q$ is $\epsilon$-stable with respect to $\tzeta$ iff it is $(\epsilon/m)$-stable with respect to $m\tzeta$;
see \cite[Remark 7.1.4]{CKM} for the analogous statement in quasimap theory.  Given $\nu\in \tbL^\vee_{\bQ}$, choose $m\in \bZ_{>0}$ such that $m\nu \in \tbL^\vee$ and define $\Q$ to be $\nu$-stable if it is $(1/m)$-stable with respect to $m\nu$; the definition is independent of 
the choice of $m$ and agrees with \cite[Definition 2.6]{FK}.
\end{remark} 

Let $LG_{g,\ell}^{\pre} (\underline{\fX}, \beta_G)$  (respectively
$LG_{g,\ell}^{\epsilon, \tzeta}(\underline{\fX},\beta_G)$) be the moduli of genus-$g$, $\ell$-pointed, degree $\beta_G$ prestable (respectively $\epsilon $-stable with respect to $\tzeta$) quasimaps  
to  $\underline{\fX}:= (V,G,\bC_\bR^*, W,\zeta)$.  More generally, let $Y$ be a $\Gamma$-invariant closed subscheme of $V$ such that $Y\cap V^{s}_G(\zeta) = Y\cap V^{ss}_G(\zeta)$ is non-empty (e.g. $Y=\mathrm{Crit}(W)$), and let
$\underline{\fY}=(Y, G, \bC_\bR^*, W, \zeta)$. Let $LG_{g,\ell}^\pre(\underline{\fY}, \beta_G)$ be the closed substack of $LG_{g,\ell}^\pre(\underline{\fX},\beta_G)$ 
parametrizing $\Q= ((\cC,\fz_1,\ldots, \fz_\ell), P,\rho,u)$ such that $u: \cC\to [Y/\Gamma] \subset [V/\Gamma]$,  and define $LG_{g,\ell}^{\epsilon,\tzeta}(\underline{\fY}, \beta_G)$ similarly. 
Fan-Jarvis-Ruan proved the following result: 

\begin{Theorem}[{\cite{FJR}}] \label{thm:proper} 
$LG^{\epsilon,\tzeta}_{g,\ell}(\underline{\fY},  \beta_G)$  is a separated Deligne-Mumford stack of finite type.  It is proper over $\Spec\bC$ if $[ (Y\cap V^{ss}_G(\zeta) )/G]$ is. 
\end{Theorem}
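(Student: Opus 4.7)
The plan is to verify each conclusion in turn, following the template established by Ciocan-Fontanine--Kim--Maulik \cite{CKM} for $\epsilon$-stable quasimaps to GIT quotients, and adapting their arguments to LG quasimaps via the $\Gamma$-structure formalism of Polishchuk-Vaintrob. First I would observe that $LG_{g,\ell}^{\epsilon,\tzeta}(\underline{\fY},\beta_G)$ is a locally closed substack of $\fB_{g,\ell}(V,\beta_G)$, cut out by the openness of the prestability conditions (base locus purely zero-dimensional, disjoint from markings and nodes) together with openness of $\epsilon$-stability with respect to $\tzeta$, and the closedness condition that the section $u$ factors through $[Y/\Gamma]\subset[V/\Gamma]$. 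Since $\fB_{g,\ell}(V,\beta_G)$ is an Artin stack locally of finite type, this at least makes $LG_{g,\ell}^{\epsilon,\tzeta}(\underline{\fY},\beta_G)$ Artin and locally of finite type, and the task reduces to (i) the Deligne-Mumford property, (ii) boundedness, (iii) separatedness, and (iv) properness under the extra hypothesis.

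For (i), I would use that $\epsilon$-stability forces finite automorphisms: condition (1) of Definition \ref{df:LG} on ampleness of $\omega_{\cC}^{\log}\otimes(P\times_{\tzeta}\bC)^\epsilon$ together with the length bound (2) kills continuous automorphisms on each irreducible component of $\cC$, compatibly with $(P,\rho,u)$, by the same line of reasoning as in \cite[Section 7]{CKM}. For (ii), boundedness follows because the ampleness bounds the number of irreducible components of $\cC$ and the degrees of $P\times_{\tzeta}\bC$ on each component in terms of $g,\ell,\beta_G,\epsilon,\tzeta$; since $\tzeta$ is a good lift so that $V^{ss}_\Gamma(\tzeta)=V^{ss}_G(\zeta)$, one then bounds the family of principal $\Gamma$-bundles and sections uniformly.

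For (iii), I would apply the valuative criterion: given a DVR $R$ with fraction field $K$ and two extensions of the same $K$-point, the induced underlying maps to the GIT quotient $[(Y\cap V^{ss}_G(\zeta))/G]$ agree by the separatedness in \cite{CKM}, and the two extensions of the $\Gamma$-structure and the section then coincide by the uniqueness of extensions of line bundles and sections across a codimension-one point in a flat family of twisted curves, together with the isomorphism $\rho$. For (iv), the hard part is existence of extensions. Given a family over $\Spec K$, one first extends the twisted curve using properness of $\fM_{g,\ell}^{\tw}$, then extends the principal $\Gamma$-bundle and the isomorphism $\rho$ using a Langton-type argument, and finally extends the section using the assumed compactness of $[(Y\cap V^{ss}_G(\zeta))/G]$ away from a finite set of special points where base points may form in the central fiber.

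The main obstacle is this last step: controlling the behavior of the section at potential base points in the limit, and in particular ensuring that $\epsilon$-stability is preserved. The good-lift hypothesis $V^{ss}_\Gamma(\tzeta)=V^{ss}_G(\zeta)$ is used here precisely to translate GIT stability with respect to $\zeta$ on $G$ into stability with respect to $\tzeta$ on $\Gamma$, which is the data that enters the length function. Once the limiting prestable LG quasimap has been shown to exist with the length bound $\epsilon\ell_y(\Q,\tzeta)\le 1$ at every point (by a Hartogs-type extension across codimension-two loci combined with blowing up to resolve base points), a stabilization step that contracts rational bridges on which $\omega_{\cC}^{\log}\otimes(P\times_{\tzeta}\bC)^\epsilon$ has nonpositive degree produces the unique $\epsilon$-stable limit, completing the verification of the valuative criterion for properness.
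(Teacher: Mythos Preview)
The paper does not prove this statement at all: it is quoted as a result of Fan--Jarvis--Ruan, with the attribution ``Fan-Jarvis-Ruan proved the following result'' and the citation \cite{FJR} in the theorem header, and no further argument is given. So there is nothing to compare your proposal against within the paper itself.

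That said, your outline is a reasonable summary of how the proof in \cite{FJR} (building on \cite{CKM}) actually proceeds: the DM property from finiteness of automorphisms forced by the ampleness condition, boundedness from degree constraints, and separatedness/properness via the valuative criterion with the good-lift hypothesis translating $G$-stability into $\Gamma$-stability. If you were asked to supply a proof here rather than a citation, your sketch would be an acceptable roadmap, though of course the actual arguments (particularly the extension of the section and the stabilization step) require substantial work that you have only gestured at.
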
 

Given $\vec{v}=(v_1,\ldots, v_\ell) \in \mathrm{Box}(\bSi_\zeta)^\ell$, let
$$
LG_{g,\vec{v}}^{\epsilon,\tzeta}(\underline{\fX}, \beta_G) : = LG_{g,\ell}^{\epsilon,\tzeta}(\underline{\fX},\beta_G)\cap LG_{g,\vec{v}}(\underline{\fX}, \beta_G).
$$
Then
$$
LG_{g,\ell}^{\epsilon,\tzeta}(\underline{\fX},\beta_G) = \bigcup_{\vec{v} \in \mathrm{Box}(\bSi_\zeta)^\ell} 
LG_{g,\vec{v}}^{\epsilon,\tzeta}(\underline{\fX},\beta_G). 
$$
 In Section \ref{sec:cosection} and Section \ref{sec:factorization} below, we fix $g, \vec{v}, \epsilon, \beta_G$, and let
$$
\bfX   = LG_{g,\vec{v}}^{\epsilon,\tzeta}(\underline{\fX},\beta_G), \quad
\bfZ  = LG_{g,\vec{v}}^{\epsilon,\tzeta}(\underline{\fZ}, \beta_G)
$$
where $\underline{\fZ} = (\Crit(W), G, \bC_R^*, W,\zeta)$.  By Theorem \ref{thm:proper}, if 
$$
\sZ_\zeta = [\left(\Crit(W)\cap V_G^{ss}(\zeta)\right) /G] = \mathrm{Crit}(\bw_\zeta) 
$$
is proper, then $\bfZ$ is proper. 

\subsection{Cosection localized virtual cycle and cosection localized virtual structure sheaf}  \label{sec:cosection} 
Recall that $v\in \mathrm{Box}(\bSi_\zeta)$ is narrow if $\sX_{\zeta,v}$ is compact.  We assume $v_1,\ldots, v_\ell$ are narrow in
this subsection. Under this assumption, Fan-Jarvis-Ruan \cite{FJR} constructed a cosection $\delta: \cO b_{\bfX} \to \cO_{\bfX}$ whose zero locus is $\bfZ$.  Applying \cite{KL13}, they obtain a {\em cosection localized virtual cycle}
$$
[\bfX]^\vir_\text{loc} \in A_*(\bfZ;\bQ)
$$
such that 
$$
\iota_*[\bfX]^\vir_\text{loc}  = [\bfX]^\vir \in A_*(\bfX;\bQ)
$$
where $\iota: \bfZ\to \bfX$ is the inclusion, and $[\bfX]^\vir$ is the Behrend-Fantechi {\em virtual fundamental class} \cite{BF97} defined by the perfect obstruction theory described in previous subsections.  When $\bfZ$ is proper, $[\bfX]^\vir_\text{loc}$ can be used to define (cohomological) GLSM invariants in the narrow sector\footnote{In \cite[Section 6]{FJR}, GLSM correlators are defined for compact type insertions \cite[Definition 4.1.4]{FJR} which are more general than narrow insertions. See \cite{Sh} for subtleties of defining GLSM invariants involving compact type insertions which are not narrow, as well as an alternative construction of genus-zero compact type GLSM invariants under additional assumptions.}.  The construction of cosection localized virtual cycle in the narrow sector in \cite{FJR} can  be viewed as generalization of Chang-Li-Li's construction of Witten's top Chern class via cosection localization \cite{CLL}.

We now consider the following particularly nice case, as in \cite[Proposition  5.6]{BF97}. 
\begin{situation}\label{nice}
$\bfX$ is smooth of dimension $r_0$ and $\cO b_{\bfX}$ is locally free of rank $r_1$. 
\end{situation} 

In Situation \ref{nice}, $\Ob_{\bfX} : = \tot(\cO b_{\bfX})$ is a vector bundle over $\bfX$ of rank $r_1$, called the obstruction bundle, and the virtual dimension is
$r=r_0-r_1$. Let $\delta^\vee: \cO_{\bfX} \to \cO b^\vee_{\bf X}$ be the dual of the cosection, which is a section of $\cO b^\vee_{\bfX}$.  
\begin{eqnarray}
[\bfX]^\vir &=& c_{r_1}(\Ob_{\bfX}) \cap [\bfX]  = (-1)^{r_1} c_{r_1}(\Ob_{\bfX}^\vee)\cap [\bfX] \in A_r (\bfX;\bQ),  \\  \label{eqn:virtual-cycle}
{[\bfX]^\vir_{\text{loc}} } &=& (-1)^{r_1} c_{r_1}(\Ob_{\bfX}^\vee,\delta^\vee) \cap [\bfX] \in A_r(\bfZ;\bQ). 
\end{eqnarray}
where
\begin{itemize}
\item $[\bfX] \in A_{r_0}(\bfX;\bQ)$ is the fundamental class of the smooth DM stack $\bfX$, 
\item $c_{r_1}(\Ob_{\bfX}^\vee)\cap -: A_k(\bfX;\bQ)\to A_{k-r_1}(\bfX;\bQ)$ is the top Chern class, and 
\item $c_{r_1}(\Ob_{\bfX}^\vee,\delta^\vee)\cap - : A_k(\bfX;\bQ)\to A_{k-r_1}(\bfZ;\bQ)$ is the localized top Chern class
 \cite[Chapter 14]{Fu98}. 
\end{itemize} 

Let $K_0(\bfX)$ (resp. $K^0(\bfX)$) denote the Grothendieck group generated by coherent sheaves (resp. locally free sheaves) on $\bfX$
with relations $[F]=[F']+[F'']$ whenever there is a short exact sequence $0\to F'\to F\to F'' \to 0$. 
Applying \cite{KL18}, one obtains a {\em cosection localized virtual structure sheaf}
$$
\cO^\vir_{\bfX, \text{loc}}\in K_0(\bfZ)
$$
such that
$$
\iota_* \cO^\vir_{\bfX,\text{loc}} = \cO^\vir_{\bfX} \in K_0(\bfX)
$$
where $\cO^\vir_{\bfX}$ is the {\em virtual structure sheaf} defined in \cite{BF97} (see \cite[Remark 5.4]{BF97}) and \cite{Lee}. 
When $\bfZ$ is proper, $\cO^\vir_\text{loc}$ an be used to define K-theoretic GLSM invariants in the narrow sector. 

In Situation \ref{nice}, 
$$
\cO^{\vir}_{\bfX} =\sum_{i=0}^{r_1} (-1)^i \wedge^i \cO b_{\bfX}^\vee = (-1)^{r_1} \det(\cO b^\vee_{\bfX}) \sum_{i=0}^{r_1}
(-1)^i \wedge^i \cO b_{\bfX} \in K_0(\bfX). 
$$
Note that
$$
\td(\Ob_{\bfX}) \ch(\cO^\vir_{\bfX}) = c_{r_1}(\Ob_{\bfX}), 
$$
so 
$$
[\bfX]^\vir = \td(\Ob_{\bfX}) \ch(\cO^\vir_{\bfX})\cap [\bfX]. 
$$

The section $\delta^\vee:  \cO_{\bfX} \to \cO b^\vee$ defines a Koszul complex 
\begin{equation} \label{eqn:Koszul} 
\bK(\delta^\vee):= \Sym^{r_1}\left(\cO b_{\bfX} \stackrel{i_{\delta^\vee}}{\to}  \cO_{\bfX}\right)
= \big[ 0\to \wedge^{r_1} \cO b_{\bfX} \stackrel{i_{\delta^\vee} }{\to} \wedge^{r_1-1} \cO b_{\bfX} \to
\cdots \to  \wedge^1 \cO b_{\bfX} \stackrel{i_{\delta^\vee}}{\to} \cO_{\bfX} \to 0 \big]
\end{equation} 
which is exact on $\bfX-\bfZ$. If $e_1,\ldots, e_k$ are local sections of $\cO b_{\bfX}$ then
$$
i_{\delta^\vee} (e_1\wedge \cdots \wedge e_k) =  \sum_{i=1}^k (-1)^{i-1}  \langle \delta^\vee, e_i \rangle  e_1\wedge \cdots \wedge e_{i-1} \wedge e_{i+1} \wedge \cdots \wedge e_k. 
$$
Note that
$$
\td(\Ob_{\bfX}^\vee)\ch^{\bfX}_{\bfZ}(\bK(\delta^\vee)) = c_{r_1}(\Ob_{\bfX}^\vee, \delta^\vee), 
$$
where  $\ch^{\bfX}_{\bfZ}(\bK(\delta^\vee))\cap : A_*(\bfX;\bQ)\to A_*(\bfZ;\bQ)$
is the localized Chern character \cite[Chapter 18]{Fu98}.

\subsection{Virtual factorization} \label{sec:factorization} 

Favero-Kim \cite{FK} construct GLSM invariants for general choice of stability in both narrow and broad sectors via matrix factorization, generalizing constructions in \cite{PV16, CFGKS}.  In this subsection we briefly describe the construction (in slightly different notation).  

\subsubsection{The Artin stack $\bfC$}
Let $\vec{\gamma} =(\gamma_1,\ldots,\gamma_\ell)$, where $\gamma_i\in \bL_\bQ/\bL$ corresponds to $v_i\in \mathrm{Box}(\bSi_\zeta)$.
Let $\fB:=\fB_{g,\vec{\gamma}}(\beta_G)$ be the universal moduli of $\Gamma$-structures defined in Section \ref{sec:Gamma-structures}.
Let $\fP_{\fB}\to \fC_{\fB}$ be the universal principal $\Gamma$-bundle over the universal curve $\pi_{\fB}:\fC_{\fB}\to \fB$, let
$\cV_{\fB}=\fP_{\fB}\times_\Gamma V$,  and let $C(\pi_{\fB *}\cV_{\fB})$ be the moduli of sections, which is an abelian cone over $\fB$, 
as in Section \ref{sec:sections}.  Then $\bfX$ is an open substack of $C(\pi_{\fB*}\cV_{\fB})$,  and the image of $\bfX$ under
the projection $C(\pi_{\fB*}\cV_{\fB})\to \fB$ (forgetting the section) is contained in a finite type open substack $\fB^\circ \subset \fB$. 
Therefore, $\bfX$ is an open substack of 
$$
\bfC:= C\left(\pi_{\fB^\circ *} \cV_{\fB^\circ}  \right)= \fB^\circ \times_{\fB} C\left(\pi_{\fB*}\cV_{\fB}\right). 
$$
For $i=1,\ldots, \ell$, there are evaluations maps 
\begin{equation}
\ev^{\bfC}_i:\bfC \lra \fX_{v_i}:= [V^{g(v_i)}/G]
\end{equation}
which restricts to 
\begin{equation}
\ev_i: \bfX \lra \sX_{\zeta, v_i} = \left[ \big(V^{g(v_i)}\cap V^{ss}_G(\zeta)\big)/G\right]  \subset \fX_{v_i}.
\end{equation} 

\subsubsection{The smooth Artin stack $\bfA$ and the smooth DM stack $\bfU$} 
Over the finite type smooth Artin stack $\fB^\circ$,  $\bR \pi_{\fB^\circ *}\cV_{\fB^\circ}$ admits a global resolution 
$$
\bR \pi_{\fB^\circ *}\cV_{\fB^\circ} = [A \stackrel{d_A}{\lra} B]
$$
where $A, B$ are locally free sheaves of $\cO_{\fB^\circ}$-modules.
Let $\pi_{\bfA/\fB^\circ}: \bfA:= \tot(A)\to \fB^\circ$ be the projection, let $t_A \in \Gamma(\bfA ,  \pi_{\bfA/\fB^\circ}^*A)$ be the tautological section, and let 
$\beta_{\bfA} := \left(\pi_{\bfA/\fB^\circ}^*d_A \right)\circ t_A \in \Gamma(\bfA, \pi_{\bfA/\fB^\circ}^*B)$. The zero locus of $t_A$ is the zero section
in $\bfA=\tot(A)$, and the zero locus of $\beta_{\bfA}$ is  $\bfC$. 
There exists an open substack $\bfU\subset \bfA$ such that $\bfU$ is a DM stack of finite type and the following diagram 
is a Cartesian square. 
$$
\xymatrix{
\bfX = Z(\beta_{\bfU}) \ar[r]^{\quad \iota_{\bfX}}    \ar[d]^{j_{\bfX} }  & \bfU \ar[d]^{j_{\bfU} } \\
\bfC = Z(\beta_{\bfA}) \ar[r] ^{\quad \iota_{\bfC}} &  \bfA
}
$$
In the above Cartesian diagram, the two vertical arrows are open embeddings, the two horizontal arrows are closed embeddings, and
$\beta_{\bfU}= j_{\bfU}^*\beta_{\bfA} \in \Gamma(\bfU,  B_{\bfU})$, where $B_{\bfU}:= j_{\bfU}^*\pi_{\bfA/\fB^\circ}^*B$.  The virtual dimension $r$ of $\bfX$ is
$$
r = \dim \fB + \rank A -\rank B   = \dim \bfU - \rank B.
$$
We have
$$
[\bfX]^\vir =  c_{\rank B}\left( B_{\bfU}, \beta_{\bfU} \right)\cap [\bfU],
$$
where
\begin{itemize}
\item $[\bfU]\in A_{r+\rank B}(\bfU;\bQ)$ is the fundamental class of the smooth DM stack $\bfU$, 
\item $c_{\rank B}\left(B_{\bfU}, \beta_{\bfU}\right) \cap - : A_{r+\rank B}(\bfU)\to A_r(\bfX)$
is the localized top Chern class, and 
\item $[\bfX]^\vir \in A_r(\bfX;\bQ)$ is the Behrend-Fantechi virtual fundamental class of $\bfX$.
\end{itemize}

\subsubsection{Evaluation maps}
For $i=1,\ldots, \ell$, the evaluation map $\ev_i^{\bfC}:\bfC\to \fX_{v_i}$ extends to $\ev_i^{\bfA}: \bfA \to  \fX_{v_i}$ which restricts to 
$\ev_i^{\bfU}:\bfU\to \sX_{\zeta,v_i}$,  so that we have the following commutative diagram
$$
\xymatrix{
\bfX \ar[r]^{\iota_{\bfX}} \ar[d]  & \bfU\ar[r]^{\ev_i^{\bfU}} \ar[r]^{\ev_i^{\bfU}} \ar[d] & \sX_{\zeta, v_i}    \ar[d]\\
\bfC \ar[r]^{\iota_{\bfC}} & \bfA \ar[r]^{\ev_i^{\bfA}\quad}  &  \fX_{v_i} 
}
$$
where $\ev_i^{\bfU}\circ \iota_{\bfX} =\ev_i$ and $\ev_i^{\bfA}\circ \iota_{\bfC} = \ev_i^{\bfC}$, and all the vertical arrows are open embeddings. 
Let 
$$
\vec{v}=(v_1,\ldots, v_\ell),\quad \fX_{\vec{v}} := \prod_{i=1}^\ell \fX_{v_i},\quad \sX_{\zeta, \vec{v}} := \prod_{i=1}^\ell \sX_{\zeta,v_i}.
$$
$\bfA$ and $\ev_i^{\bfA}$ are chosen such that 
\begin{equation}\label{eqn:evA}
\ev^{\bfA}:= \prod_{i=1}^\ell \ev_i^{\bfA}: \bfA \to  \fX_{\vec{v}}
\end{equation} 
is a surjective smooth map between smooth Artin stacks. 
More explicitly, given any object  $\xi = ((\cC,\fz_1,\ldots, \fz_\ell), P, \rho)$ 
in $\fB^\circ(\bullet)$, which corresponds to a morphism $\Spec\bC \to \fB^\circ$, let
$\bfA_\xi := \Spec\bC\times_{\fB^\circ} \bfA$ be the fiber of $\bfA =\tot(A)$ over $\xi$. We have the following linear maps between complex vector spaces:
\begin{equation}
H^0(\cC,P\times_{\Gamma}V) \stackrel{\iota_{\bfC,\xi}}{\lra}   \bfA_\xi \stackrel{\ev^{\bfU}_\xi}{\lra}  
\bigoplus_{i=1}^\ell H^0(\fz_i, \left(P\times_{\Gamma} V)|_{\fz_i}\right)  = \bigoplus_{i=1}^\ell V^{g(v_i)}
\end{equation} 
where $\iota_{\bfC,\xi}$ is injective and $\ev^{\bfU}_\xi$ is surjective.  As a consequence, 
\begin{equation} \label{eqn:evU} 
\ev^{\bfU}:= \prod_{i=1}^\ell \ev_i^{\bfU}: \bfU \to \sX_{\zeta,\vec{v}} 
\end{equation}
is a smooth map between smooth DM stacks.  

\subsubsection{Superpotentials and matrix factorizations}
Let $\bw_{v_i}:\fX_{v_i} = [V^{g(v_i)}/G] \to  \bC$ denote the restriction of $\bw: [V/G]\to \bC$.
Define a superpotential $\bw_{\bfA}$ on $\bfA$: 
\begin{equation}\label{eqn:wA} 
\bw_{\bfA}:= \sum_{i=1}^\ell  (\ev_i^{\bfA})^* \bw_{v_i} \in \Gamma(\bfA, \cO_{\bfA})
\end{equation}
which restricts to a superpotential on $\bfU$:
\begin{equation}\label{eqn:wU}
\bw_{\bfU}:= j_{\bfU}^*\bw_{\bfA}= \sum_{i=1}^\ell (\ev_i^{\bfU})^*\bw_{\zeta,v_i}  \in \Gamma(\bfU,\cO_{\bfU}). 
\end{equation}
The sum of residues of a meromorphic 1-form on a curve is zero, so 
$$
\iota_{\bfC}^*\bw_{\bfA}=0, \quad \iota_{\bfX}^*\bw_{\bfU}  =  0. 
$$

When the GLSM $\ufX$ is a convex hypbrid model, it is shown in \cite{CFGKS} that
\begin{enumerate}
\item[(a)] $\bfU$ can be chosen to be separated over $\Spec\bC$.
\item[(b)] There exists a cosection $\alpha^\vee_{\bfA}: \pi_{\bfA/\fB^\circ}^*B\to \cO_{\bfA}$, or equivalently a section 
$\alpha_{\bfA} : \cO_{\bfA}\to \pi_{\bfA/\fB^\circ}^*B^\vee$, such that 
$$
\langle \alpha_{\bfA}, \beta_{\bfA} \rangle = - \bw_{\bfA}.
$$
\item[(c)] Let $\alpha_{\bfU} := j_{\bfU}^*\alpha_{\bfA} \in \Gamma(\bfU, B_{\bfU}^\vee)$. Then 
$$
\bfZ = Z(\alpha_{\bfU})\cap Z(\beta_{\bfU}) \subset \bfX = Z(\beta_{\bfU})\subset \bfU. 
$$
\end{enumerate} 
 In Section \ref{sec:singular}, we will provide explicit construction of $\bfU$ and $\alpha_{\bfA}^\vee$ satisfying (a)-(c) for the genus-zero one-pointed moduli spaces used to define the GLSM $I$-functions for all abelian GLSMs. 

When $\alpha_{\bfA}$ exists,  one obtains a Koszul matrix factorization $\{ \alpha_{\bfA},\beta_{\bfA} \}$ of $(\bfA, -\bw_{\bfA})$ defined by 
$$
\{ \alpha_{\bfA}, \beta_{\bfA} \} = \Big[
\xymatrix{
\bigoplus_i \Lambda^{2i} \pi_{\bfA/\fB^\circ}^*B^\vee \ar@/{ }^{1pc}/[r]^\partial &   \bigoplus_i \Lambda^{2i+1} \pi_{\bfA/\fB^\circ}^*B^\vee \ar@/{ }^{1pc}/[l]^\partial 
}
\Big],\quad \text{where }  \partial = i_{\beta_{\bfA}} + \alpha_{\bfA}\wedge.
$$
Then
$$
\bK_{\bfU}:= \{ \alpha_{\bfU},\beta_{\bfU}\} = j_{\bfU}^*\{\alpha_{\bfA}, \beta_{\bfA}\}
$$
is a Koszul matrix factorization of $(\bfU, -\bw_{\bfU})$. It is called the {\em fundamental factorization} in \cite{PV16, CFGKS}
and called the {\em virtual factorization} in \cite{FK}.

\subsubsection{Localized Chern character and the virtual fundamental class} 
Let  $U$ be a smooth DM stack over $\bC$ and let $w:U\to \bC$ be a regular function.
In \cite[Section B.4]{FK}, Favero-Kim define the Atiyah class and the localized Chern character
of a matrix factorization for $(U,w)$, following the construction of Kim-Polishchuk \cite{KP22}
when $U$ is a smooth scheme. Applying the definition to $\bK_{\bfU}$, one obtains a {\em localized Chern character}
$$
\ch^{\bfU}_{\bfZ} \bK_{\bfU} \in \bH^{even}_{\bfZ}\left(\bfU, (\Omega^\bullet_{\bfU}, -d\bw_{\bfU})\right). 
$$
The virtual fundamental class of $\bfX$ is defined to be
$$
[\bfU]^\vir_{\bw} := \big(\prod_{i=1}^\ell r_i \big) \td\ch_{\bfZ}^{\bfU} \bK_{\bfU} \in \bH^{even}_{\bfZ}\left(\bfU, (\Omega^\bullet_{\bfU}, -d\bw_{\bfU})\right).
$$
where $r_i = \ord(g(v_i))$, or equivalently $\fz_i = B\mu_{r_i}$.

\subsection{Effective classes}
Given $\ufX=(V,G,\bC_R^*,W,\zeta)$, and $g,\ell\in \bZ_{\geq 0}$, define
$$
\bK^\eff(\ufX)_{g,\ell}  := \{ \beta_G \in \bL_\bQ:  LG_{g,\ell}^{\pre}(\ufX,\beta_G) \text{ is nonempty}\}. 
$$
We now give a more explicit description of  $K^\eff(\fX)_{g,\ell}$ when $\ell=1$.  Let $((\cC,\fz), P, \rho,u)$ be
an object in $LG_{g,1}^{\pre}(\ufX,\beta_G)$.  Then 
$$
u(\fz) \in \sX_\zeta = \bigcup_{I\in \cA_\zeta^{\min} }\sX_I.
$$
If $u(\fz) \in \sX_I$ then $u_i (x)\neq 0$ for $i\in I$. We observe that
$$
u_i(\fz)\neq 0 \Rightarrow  \deg \cL_i \geq 0 \text{ and } \age_{\fz}(\cL)=0
\Leftrightarrow \deg \cL_i \in \bZ_{\geq 0}
$$
since $\deg \cL_i -\age_{\fz}(\cL)\in \bZ$. Therefore, 
$$
\bK^\eff(\ufX)_{g,1}\subset    \bigcup_{I\in \cA_\zeta^{\min}} \bK^\eff_I(\ufX)_{g,1},
$$  
where
\begin{equation}  
\bK^\eff_I(\ufX)_{g,1}= \{\beta_G \in \bL_{\bQ}: \langle \D_i,\beta_G\rangle + \frac{q_i}{2} (2g-1) \in \bZ_{\geq 0} \textup{ for all } i\in I \}.
\end{equation} 
Indeed, it is not hard to see that
\begin{equation} 
\bK^\eff(\ufX)_{g,1} = \bigcup_{I\in \cA_\zeta^{\min} }\bK^\eff_I(\ufX)_{g,1}. 
\end{equation}

Let $\{ \D_i^{*I}: i\in I\}$ be the $\bQ$-basis of $\bL_{\bQ}\cong \bQ^{\boldsymbol{\kappa}}$ dual to the $\bQ$-basis
$\{ \D_i: i\in I\}$ of $\bL^\vee_\bQ$: for any $i,j\in I$, 
$$
\langle \D_i,  \D_j^{*I} \rangle =\delta_{ij} 
$$
Then
$$
\bK^\eff_I(\sX_\zeta ,\bw_\zeta)_{g,1} = \Big\{ \sum_{i\in I} (m_i -  \frac{q_i}{2} (2g-1)) \D_i^{*I}:  m_i \in \bZ_{\geq 0}  \Big \}  
$$

\subsection{Stacky loop spaces} In orbifold quasimap theory \cite{CKK}, the  $J^\epsilon$-function 
is defined via $\bC^*$ localization on genus-zero  $\epsilon$-stable quasimap graph spaces.  
 When the gauge group $G=(\bC^*)^{\boldsymbol{\kappa}}$
is an algebraic torus, the small $I$-function $I= J^{0+}|_{t=0}$ can be computed completely explicitly by $\bC^*$ localization on stacky loop spaces.  Similarly, we may define the small $J^\epsilon$-function of a GLSM  via $\bC^*$ localization on genus-zero $\epsilon$-stable LG quasimap graph spaces. When the gauge group $G=(\bC^*)^{\boldsymbol{\kappa}}$ is an algebraic torus, the small $I$-function $I=J^{0_+}|_{t=0}$ of the GLSM can be computed by 
$\bC^*$ localization on the LG version of stacky loop spaces.  In this paper, we define
and compute $K$-theoretic and cohomological $I$-functions via $\bC^*$ localization on stacky loop spaces defined in this subsection. 

\subsubsection{Classical version}
Given any subset $I$ of $\{1,\ldots, n+{\boldsymbol{\kappa}}\}$, define
$$
V_I = \{ (x_1,\ldots,x_{{\boldsymbol{\kappa}}+n}) \in V=\bC^{n+{\boldsymbol{\kappa}}}\mid x_i\neq 0 \textup{ if } i \in I\} = \bC^{I'} \times (\bC^*)^I
$$
where $I' =\{ 1,\ldots,n+{\boldsymbol{\kappa}}\}\setminus I$.  Then
$$
V^{ss}_G(\zeta) = \bigcup_{I\in \cA_\zeta^{\min}} V_I, 
$$ 
and 
$$
\sX_\zeta = [V^{ss}_G(\zeta)/G] = \bigcup_{I\in \cA_\zeta^{\min}} \sX_I
$$
where $\sX_I = [V_I/G] \subset \sX_\zeta$ is an open toric substack which is an affine toric orbifold
defined by the $n$-dimensional cone $\sigma_{I'}$ spanned by $\{ v_i: i \in I'\}$. $\cX_I$ contains a unique torus fixed (possibly stacky) point
$$
\fp_I = \left[ \left( \{0\}^{I'} \times (\bC^*)^I\right) \Big/ G \right] = \left[  \left( \{0\}^{\bar I} \times \{1\}^I \right) \Big/ G_{\sigma_{I'}}
\right] \cong B G_{\sigma_{I'}}
$$
where $G_{\sigma_{I'}}  \subset G$ is the stabilizer of the point
$\{0\}^{I'} \times \{1\}^I \in V$ (see Section \ref{sec:closed-substacks}).  Let 
$$
N_{\sigma_{I'}} = \bigoplus_{i\in I} \bZ v_i. 
$$
Then $G_{\sigma_{I'}}$ is a finite abelian group, and 
$$
G_{\sigma_{I'}}  \cong N/N_{\sigma_{I'}},\quad \Hom(G_{\sigma_{I'}} ,\bC^*)\cong \bL^\vee\Big/ \big(\bigoplus_{i\in I} \bZ \D_i  \big).
$$

\subsubsection{Quantum version}
We introduce the following convention. Given any rational number $m/a$, where $m\in \bZ$, $a\in \bZ_{>0}$, and 
$m,a$ are coprime, we define 
$$
H^i(\bP^1, \cO_{\bP^1}(\frac{m}{a}) ) := H^i(\bP[a,1], \cO_{\bP[a,1]}(m)),\quad i=0,1.
$$
Recall that the total space of $\cO_{\bP^1[a,1]}(m)$ is $[  \left( (\bC^2-\{0\})\times\bC  \right) /\bC^*]$, where $\bC^*$ acts by weights $(a,1,m)$. 

Given any   $\beta  \in \bK^{\eff}(\sX_\zeta,\bw_\zeta)_{0,1}$, we let
\begin{equation} \label{eqn:V} 
V_\beta = \bigoplus_{i=1}^{n+{\boldsymbol{\kappa}}} V_{\beta,i},  \quad \text{where } V_{\beta,i}= H^0\left(\bP^1, \cO_{\bP^1}(\langle \D_i, \beta\rangle -\frac{q_i}{2})\right),
\end{equation}
and let 
\begin{equation} \label{eqn:W}
W_\beta= \bigoplus_{i=1}^{n+{\boldsymbol{\kappa}}} W_{\beta,i}, \quad \text{where } W_{\beta,i} = H^1\left(\bP^1, \cO_{\bP^1}(\langle \D_i, \beta\rangle -\frac{q_i}{2})\right). 
\end{equation} 
Let $\chi^{\D_i}: G\to \bC^*$, where $1\leq i \leq n+{\boldsymbol{\kappa}}$, be defined as in Section \ref{sec:geometry}.
Let $G$ act on $V_{\beta,i}$ and $W_{\beta,i}$  by 
$g\cdot u = \chi^{\D_i}(g) u$ where $g\in G$ and $u\in V_{\beta,i}$ or $W_{\beta,i}$.

\begin{definition}[stacky loop space] \label{stacky-loop} 
We define the degree $\beta$ stacky loop space by
\begin{equation} 
\sX_{\zeta,\beta} := [V_\beta^{ss}(\zeta)/G].
\end{equation} 
\end{definition} 
The stacky loop space  in the above definition is the analogue of 
the stacky loop space in  orbifold quasimap theory \cite[Section 4.2]{CCK},
which can be viewed as the orbifold version of Givental's toric map space \cite[Section 5]{Gi98}. 

We have
$$
V_\beta^{ss}(\zeta) = \bigcup_{I\in\cA_\zeta^{\min} }  V_{\beta,I}
$$
where
$$
V_{\beta,I} = \{ (u_1,\ldots, u_{n+{\boldsymbol{\kappa}}})\in V_\beta\mid u_i \neq 0  \text{ if }i\in  I \}. 
$$

\begin{definition}[obstruction bundle and obstruction sheaf] 
We define the degree $\beta$ {\em obstruction bundle} by
$$
\Ob_{\zeta,\beta} = [ \left(V_\beta^{ss}(\zeta) \times W_\beta\right) /G].
$$
The degree $\beta$ {\em  obstruction sheaf} $\cOb_{\zeta,\beta}$ is the locally free sheaf of $\cO_{\sX_{\zeta,\beta}}$-modules on $\sX_{\zeta,\beta}$ associated to 
the vector bundle $\Ob_{\zeta,\beta}$. 
\end{definition} 
The obstruction bundle $\Ob_{\zeta,\beta}$ is a toric vector bundle over the smooth toric DM stack $\sX_{\zeta,\beta}$, and 
$$
\Ob_{\zeta,\beta} =\Spec\left(\Sym \cOb_{\zeta,\beta}^\vee\right).
$$
Let $\cT_{\sX_{\zeta,\beta}}$ be the tangent sheaf of $\sX_{\zeta,\beta}$.  The two-term complex 
\begin{equation}\label{eqn:tangent-obstruction}
\big[ \cT_{\sX_{\zeta,\beta}} \stackrel{0}{\lra} \cOb_{\zeta,\beta} \big]
\end{equation} 
is a perfect tangent-obstruction complex \cite{LT98} on $\sX_{\beta,\zeta}$. 
Taking the dual of \eqref{eqn:tangent-obstruction}, we obtain
\begin{equation} \label{eqn:prefect-obstruction}
\big[ \cOb^\vee_{\zeta,\beta} \stackrel{0}{\lra} \Omega_{\sX_{\zeta,\beta}} \big]
\end{equation} 
which is a perfect obstruction theory \cite{BF97} on $\sX_{\zeta,\beta}$.  In particular, the tangent-obstruction complex \eqref{eqn:tangent-obstruction}
and the perfect obstruction theory are objects in $D(\sX_{\zeta,\beta})$, the derived category of coherent sheaves on $\sX_{\zeta,\beta}$. The virtual tangent bundle is
$$
T^\vir_{\sX_{\zeta,\beta}} = T_{\sX_{\zeta,\beta}} - \Ob_{\sX_{\zeta,\beta}} \in K(\cX_{\zeta,\beta}). 
$$

Let $T_{\sX_{\zeta,\beta}/BG}$ be the relative tangent bundle
of the smooth map $\sX_{\zeta,\beta} \to BG$. We have the following short exact sequence of vector bundles on $\sX_{\zeta,\beta}$: 
\begin{equation}
0\to \cX_{\zeta,\beta} \times \bL_{\bC} \to T_{\sX_{\zeta,\beta}/BG} = [ \left(V_\beta^{ss}(\zeta)\times V_\beta\right) /G] \to T_{\sX_{\zeta,\beta}} \to 0. 
\end{equation} 

Given $I\in \cA_\zeta^{\min}$,  $\sigma_{I'}$ is an $n$-dimensional cone in $\Sigma$.  Define
\begin{eqnarray*} 
\bK^\eff_I(\sX,\zeta)_{0,1} & \longrightarrow  &  \mathrm{Box}(\sigma_{I'}) =\Big\{ \sum_{i\in I'} a_i v_i: a_i \in [0,1)\cap \bQ \Big\} \\
\beta  &\mapsto&  v(\beta):= \sum_{i \in I'}   \{ - \langle \D_i   ,\beta\rangle  + \frac{q_i}{2}  \} v_i.
\end{eqnarray*} 

The big torus  $\tT=(\bC^*)^{n+{\boldsymbol{\kappa}}}$ acts on $V_\beta$ by
$$
(\tit_1,\ldots, \tit_{n+{\boldsymbol{\kappa}}})\cdot (u_1,\ldots, u_{n+{\boldsymbol{\kappa}}}) = (\tit_1 u_1,\ldots, \tit_{n+{\boldsymbol{\kappa}}} u_{n+{\boldsymbol{\kappa}}}).
$$
This induces an action of $T=\tT/G \cong (\bC^*)^n$ (the flavor torus) on $\sX_{\zeta, \beta}$. The tangent sheaf
$\cT_{\cX_{\zeta,\beta}}$ and the obstruction sheaf $\cOb_{\zeta,\beta}$ are $T$-equivariant locally free sheaves $\sX_{\zeta,\beta}$, so the
the perfect obstruction theory is $T$-equivariant and is an object in $D_T(\sX_{\zeta,\beta})$, the derived category of $T$-equivariant
coherent sheaves on $\sX_{\zeta,\beta}$, and $T^\vir_{\sX_{\zeta,\beta}} \in K_T(\sX_{\zeta,\beta})$.





Define
$$
V_{\beta,I}^\circ =\{ (u_1,\ldots,u_{n+{\boldsymbol{\kappa}}}) \in V_{\beta} \mid u_i (1,0) \neq 0 \text{ if }i\in I\}.
$$
 Then $V_{\beta,I}^\circ$ is a Zariski open dense subset of $V_{\beta,I}$, and 
$$
V_\beta^{ss}(\zeta)^\circ :=\bigcup_{I\in \cA_\zeta^{\min}} V_{\beta,I}^\circ
$$
is a Zariski dense open subset of $V_\beta^{ss}(\zeta)$. Define
$$
\sX_{\zeta,\beta}^\circ := [V_\beta^{ss}(\zeta)^\circ/G]
$$
which is the open substack of $\sX_{\zeta,\beta}$. (Our notation $\sX_{\zeta,\beta}^\circ$ is motivated by Okounkov's notation in \cite{Ok20}, in which 
$\mathsf{QM}_\circ$ denotes the open subtack of $\mathsf{QM}$ where $\infty=[1,0]$ is not a base point.)    

Given $I\in \cA_\zeta^{\min}$, $\sigma_{I'}$ is a top-dimensional (i.e. $n$-dimensional) cone in $\Sigma$.  Recall that $g(v) \in G_I$ is the image of $v =\sum_{i\in I'} a_i v_i \in \mathrm{Box}(\sigma_{I'})$ under the
bijection $\mathrm{Box}(\sigma_{I'})  \to G_{\sigma_{I'}}$. The $g(v)$-fixed subspace of $V$ is
$$
V^{g(v)}  = \{ x= (x_1,\ldots, x_{n+{\boldsymbol{\kappa}}}) \in V \mid  x_i =0 \text{ if }  i\in I' \text{ and } a_i  \notin \bZ\}.
$$
The connected component $\sX_{\zeta,v}$ of the inertia stack $I\sX_{\zeta}$ associated to $v$ is
$\sX_{\zeta,v} =\left[ \big( V^{g(v)} \cap V^{ss}(\zeta) \big) /G \right]$ which is an open dense substack of the Artin stack $\fX_v= [V^{g(v)}/G]$.  
There is an evaluation map
$$
\ev_\infty: \cX_{\zeta, \beta}\lra \fX_{v(\beta)}  \quad [u_1,\ldots, u_{n+{\boldsymbol{\kappa}}}] \mapsto [u_1(1,0),\ldots, u_{n+{\boldsymbol{\kappa}}}(1,0)].
$$
Then
$$
\ev_\infty^{-1}(\sX_{\zeta,v(\beta)}) = \sX^\circ_{\zeta,\beta} 
$$

\subsection{Torus actions and $\bC_q^*$ fixed points}
In orbifold quasimap theory, the $I$-function is obtained by torus localization on the stacky loop space, using the
$\bC^*$-action on $\bP^1$, the coarse moduli space of $\bP[a,1]$ where $a$ is a positive integer.   We denote
this $\bC^*$ by $\bC^*_q$ since it corresponds to $\bC_q^\times$ in \cite{Ok20}. 
For $\tT$-equivariant parameters, we use notation similar to that in \cite{CIJ} and \cite{GiV}.
$$
K_{\bC_q^*}(\bullet) =  K(B\bC_q^*) = \bZ[q^{\pm 1} ],\quad K_{\tT}(\bullet) = K(B\tT) = \bZ[\Lambda_1^{\pm 1},\ldots, \Lambda_{n+{\boldsymbol{\kappa}}}^{\pm 1} ],
$$
Let
$$
z = c_1(q) \in  H^2_{\bC_q^*}(\bullet;\bZ),\quad \lambda_j = -c_1(\Lambda_j) \in H^2_{\tT}(\bullet;\bZ).
$$
Then 
$$
H^*_{\bC^*_q}(\bullet;\bZ) = H^*(B\bC_q^*;\bZ) =\bZ[z],\quad H^*_{\tT}(\bullet;\bZ) = H^*(B\tT;\bZ) =\bZ[\lambda_1,\ldots,\lambda_{n+{\boldsymbol{\kappa}}}],\quad
\tM =\bigoplus_{j=1}^{n+{\boldsymbol{\kappa}}} \bZ\lambda_j. 
$$

Let $\deg(x)=a$, $\deg(y)=1$. Then 
$$
\bC[x,y] =\bigoplus_{m=0}^\infty \bC[x,y]_m
$$
where $\bC[x,y]_m$ denote the degree $m$ part of the graded ring $\bC[x,y]$. If $m\in \bZ_{\geq 0}$ then
$$
H^0(\bP[a,1], \cO_{\bP^1[a,1]}(m)) = \bC[x,y]_m  = \bigoplus_{k=0}^{\lfloor \frac{m}{a}\rfloor} \bC x^k y^{m-ka},\quad
H^1(\bP[a,1],\cO_{\bP^1[a,1]}(m))=0.  
$$
Let $\bC^*_q$ act on $\bP[a,1]$ by $q\cdot[x,y] = [qx,y]= [x,q^{-1/a}y]$, and on  
$\bC[x,y]$ by $q\cdot x=x$, $q\cdot y = q^{-1/a}y$.  Given any number $r\in \bQ$, let
$\lfloor r \rfloor$ be the unique integer such that $\lfloor r \rfloor\leq r <  \lfloor r \rfloor +1$, and let
$\{ r\} := r - \lfloor r \rfloor \in [0,1)$.  As an element in $K_{\bC^*_q}(\bullet)$, 
\begin{eqnarray*}
&& H^0(\bP[a,1], \cO_{\bP^1[a,1]}(m))  - H^1(\bP[a,1], \cO_{\bP^1[a,1]}(m))   \\
&=&  \frac{q^{-\{  \frac{m}{a}\}}  }{1-q^{-1}} + \frac{q^{-\frac{m}{a}} }{1-q} 
=  \sum_{k=0}^\infty q^{-\{ \frac{m}{a}\} -k} -\sum_{k=0}^\infty q^{-\frac{m}{a}-1-k}\\
&=& 
\begin{cases}
\displaystyle{\sum_{k=0}^{\lfloor \frac{m}{a}\rfloor}  q^{-\{ \frac{m}{a}\}-k} }, & m\geq 0\\
0, &- a\leq m \leq -1 \\
\displaystyle{ -\sum_{k=1}^{-\lfloor \frac{m}{a}\rfloor -1} q^{-\{ \frac{m}{a}\} +k} }, & m \leq  -a-1
\end{cases}
\end{eqnarray*}

For $j=1,\ldots,n+{\boldsymbol{\kappa}}$,  we define
\begin{equation}\label{eqn:d-beta} 
d_j(\beta):= \langle \D_j,\beta \rangle-\frac{q_j}{2} \in \bQ
\end{equation}
Then 
 \begin{equation}\label{eqn;Vweights}
V^j_\beta = \begin{cases}
\displaystyle{   \sum_{k=0}^{\lfloor d_j(\beta) \rfloor}   \Lambda_j q^{-\{ d_j(\beta)  \} -k},  } & d_j (\beta) \geq 0;\\
  0, & d_j (\beta) <0
  \end{cases},\quad \quad
  W^j_\beta =\begin{cases}
  0, & d_j(\beta) \geq -1,\\
\displaystyle{  \sum_{k=1}^{-\lfloor d_j(\beta) \rfloor -1} \Lambda_j q^{-\{ d_j (\beta) \} +k},   } &  d_j (\beta) <-1.
  \end{cases}
\end{equation}
$$
(V_\beta^j)^{\bC^*_q} =\begin{cases}
\bC, &  d_j(\beta) \in \bZ_{\geq 0} ,\\
0, & \text{otherwise}.
\end{cases} 
$$

Write
$$
V =\bigoplus_{j=1}^{n+\boldsymbol{\kappa}} V_j= V_\beta^+ \oplus V_\beta^- \oplus V_\beta^\perp
$$
where $V_j =\Spec\bC[x_j]$, 
\begin{equation}
V_\beta^+ :=\bigoplus_{ \substack{j \in [1..n+\boldsymbol{\kappa}] \\ d_j(\beta)\in \bZ_{\geq 0} }} V_j, \quad
V_\beta^- :=\bigoplus_{ \substack{j \in [1..n+\boldsymbol{\kappa}] \\ d_j(\beta)\in \bZ_{< 0} }} V_j, \quad
V_\beta^\perp  :=\bigoplus_{ \substack{j \in [1..n+\boldsymbol{\kappa}] \\ d_j(\beta)\notin \bZ }}V_j. 
\end{equation} 
Then 
$$
V_\beta^+ = V_\beta^{\bC_q^*},\quad V_\beta^+ \oplus V_\beta^- = V^{g(v(\beta))}. 
$$
We define 
$$
\sF_\beta := \left[ \left. \left(V^+_\beta \cap V^{ss}_G(\zeta)\right) \right/G \right] .
$$
Then 
$\sF_\beta$ is a closed substack of $\sX_{\zeta,v(\beta)}$. 
$$
\ev_\infty: \sX_{\zeta, \beta}^\circ \to \sX_{\zeta,v(\beta)}
$$
restricts to an isomorphism 
$$
\ev_\infty: \left(\sX_{\zeta, \beta}^\circ \right)^{\bC^*_q} \to \sF_\beta.  
$$
We identify $\left(\sX_{\zeta, \beta}^\circ\right)^{\bC^*_q}$ with $\sF_\beta$ under the above isomorphism. 

If $I\in \cA_\zeta^{\min}$ and $\beta \in \bK^{\eff}(\sX_\zeta, \bw_\zeta)$  then 
the torus fixed point $\fp_I$  is contained in $\sF_\beta$ if and only of $\beta \in \bK^{\eff}_I$.

\subsection{Virtual tangent and normal bundles} 
Since $\bC_q^*$ acts trivially on $\sF_\beta$, it acts linearly on the fibers of any $\bC_q^*$-equivariant vector bundle  on $\sF_\beta$.  If $V$ is a $\bC^*_q$-equivariant vector bundle over $\sF_\beta$ then
$$
V = \bigoplus_{d\in \bZ}  V_d = V^f \oplus V^m,
$$
where $V_d$ is the subbundle on which $\bC_q^*$ acts by weight $d$, and $V^m =\displaystyle{ \bigoplus_{d\neq 0} V_d}$  (resp. $V^f=V_0$)  is the moving (resp. fixed) part of
$V$ under the $\bC^*_q$-action. 
Let
$$
T_\beta^1 :=  \left. T_{\sX_{\zeta,\beta}}\right|_{\sF_\beta},\quad T_\beta^2 :=  \left. \Ob_{\zeta,\beta}\right|_{\sF_\beta}. 
$$
Then $T_\beta^{1,f} = T_{\sF_\beta}$ is the tangent bundle of $\sF_\beta$, and $T_\beta^{1,m} =N_{\sF_\beta/\sX_{\zeta,\beta}^\circ} $ is the normal bundle
of $\sF_\beta$ in $\sX_{\zeta,\beta}$.

The virtual tangent bundle of $\sF_\beta$ is 
$$
T_\beta^{1,f} - T_\beta^{2,f} = T_{\sF_\beta} -0 = T_{\sF_\beta}.
$$
Therefore,
$$
[\sF_\beta]^\vir = [\sF_\beta]. 
$$
The virtual normal bundle of $\sF_\beta$ is defined to be
\begin{equation}\label{eqn:Nvir} 
N^\vir_\beta: = T_\beta^{1,m} - T_\beta^{2,m} \in K_{\bC_q^*\times \tT}(\sF_\beta). 
\end{equation}
Let 
$$
\iota_{\beta \to v(\beta)}: \sF_\beta \to  \sX_{\zeta,v(\beta)}, \quad \iota_v: \sX_{\zeta,v}\to  \sX_\zeta, 
\quad \iota_\beta = \iota_{v(\beta)}\circ \iota_{\beta\to v(\beta)} : \sF_\beta\to \sX_\zeta
$$
be inclusion maps. 
\begin{Proposition}
\begin{equation} \label{eqn:NtN} 
N_\beta^\vir = \iota_{\beta \to v(\beta)}^*\tN_\beta^\vir
\end{equation} 
where 
\begin{equation}\label{eqn:tNbeta} 
\tN^\vir_\beta =  \sum_{j=1}^{n+{\boldsymbol{\kappa}}} \iota_{v(\beta)}^* (U^{\tT}_j)^{-1} \Big( \sum_{k=0}^\infty q^{-d_j(\beta)+k}  -\sum_{k=0}^\infty q^{ \{-d_j(\beta)\} +k }\Big) 
+\sum_{d_j(\beta)\in\bZ_{<0}} \iota_{v(\beta)}^*(U_j^{\tT})^{-1} \quad \in K_{\bC^*_q\times \tT}\left(\sX_{\zeta,v(\beta)}\right).
\end{equation} 
\end{Proposition}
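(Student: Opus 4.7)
\emph{Proof plan.}
The strategy is to compute $N_\beta^\vir$ directly from the $\bC_q^* \times \tT$-equivariant presentation of the perfect obstruction theory \eqref{eqn:prefect-obstruction} on $\sX_{\zeta,\beta}^\circ$, and then rewrite the answer in the form \eqref{eqn:tNbeta}. First, combining the short exact sequence $0 \to \sX_{\zeta,\beta}\times\bL_\bC \to T_{\sX_{\zeta,\beta}/BG} \to T_{\sX_{\zeta,\beta}} \to 0$ with the definitions of $\cOb_{\zeta,\beta}$ and $T_{\sX_{\zeta,\beta}/BG}$ gives
\begin{equation*}
T^\vir_{\sX_{\zeta,\beta}} \;=\; [V_\beta] - [W_\beta] - [\bL_\bC] \quad \in K_{\bC_q^*\times\tT}(\sX_{\zeta,\beta}),
\end{equation*}
where $[V_\beta] = \bigoplus_j [V_\beta^{ss}(\zeta)\times V_{\beta,j}/G]$ and similarly for $[W_\beta]$. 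Since $\bL_\bC$ is $\bC_q^*$-trivial, its moving part vanishes, so $N_\beta^\vir = ([V_\beta]-[W_\beta])^{\mathrm{mov}}|_{\sF_\beta}$.

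Next, I would use the coordinate splittings of $V_\beta$ and $W_\beta$: each $V_{\beta,j}$ has pure $G$-weight $\chi^{\D_j}$ and pure $\tT$-weight $\lambda_j$. Letting $\cM_j$ denote the $\bC_q^*\times\tT$-trivial line bundle on $\sX_{\zeta,\beta}$ descended from the $G$-character $\chi^{\D_j}$, this yields $[V_\beta^{ss}(\zeta)\times V_{\beta,j}/G] = \cM_j \otimes V_{\beta,j}$, and likewise for $W_{\beta,j}$. Combined with the equivariant Euler characteristic of $\cO(m)$ on $\bP[a,1]$ computed just before \eqref{eqn;Vweights}, this gives
\begin{equation*}
[V_\beta] - [W_\beta] \;=\; \sum_j \cM_j \otimes \Lambda_j \cdot \chi_j, \qquad \chi_j := \frac{q^{-\{d_j(\beta)\}}}{1-q^{-1}} + \frac{q^{-d_j(\beta)}}{1-q}.
\end{equation*}

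The key geometric identification is
\begin{equation*}
\ev_\infty^*\bigl(\iota_{v(\beta)}^*(U_j^{\tT})^{-1}\bigr) \;=\; \cM_j \otimes \Lambda_j \quad \text{in}\ \Pic_{\bC_q^*\times\tT}(\sX_{\zeta,\beta}^\circ),
\end{equation*}
which I would verify by matching $G$-characters and $\tT$-weights using the definition \eqref{eqn:u-p-lambda} of $U_j^{\tT}$. Combined with $\iota_{\beta\to v(\beta)} = \ev_\infty|_{\sF_\beta}$, this rewrites $([V_\beta]-[W_\beta])|_{\sF_\beta}$ as $\sum_j \iota_{\beta\to v(\beta)}^*\iota_{v(\beta)}^*(U_j^{\tT})^{-1}\cdot\chi_j$, reducing the problem to an identity of Laurent polynomials in $q$.

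The final step is to extract the $\bC_q^*$-fixed part of each $\chi_j$ and match with \eqref{eqn:tNbeta}. A case analysis shows that the $q^0$ coefficient of $\chi_j$ equals $1$ when $d_j(\beta)\in\bZ_{\geq 0}$ and $0$ otherwise (for non-integer $d_j$ this uses $\{-d_j\} = 1-\{d_j\}$, which makes both exponents in $\chi_j$ non-integer). A direct algebraic identity then yields
\begin{equation*}
\chi_j - \bigl[d_j(\beta) \in \bZ_{\geq 0}\bigr] \;=\; \frac{q^{-d_j(\beta)}-q^{\{-d_j(\beta)\}}}{1-q} + \bigl[d_j(\beta) \in \bZ_{<0}\bigr],
\end{equation*}
with the bracket denoting the Iverson symbol. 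Re-interpreting $\frac{q^a}{1-q} = \sum_{k=0}^\infty q^{a+k}$ as a formal power series produces exactly \eqref{eqn:tNbeta}. The main obstacle is the line-bundle identification in the third paragraph: tracking the $G$- and $\tT$-equivariant structures of $\cM_j$, $\Lambda_j$, and $\ev_\infty^*\iota_{v(\beta)}^*(U_j^{\tT})^{-1}$ through the quotient, evaluation, and restriction maps. Once this is done, everything else reduces to elementary Laurent-polynomial bookkeeping.
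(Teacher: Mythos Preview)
Your proposal is correct and follows essentially the same approach as the paper: both compute $N_\beta^\vir$ by writing the restricted virtual tangent bundle as a sum over $j$ of $\iota_\beta^*(U_j^{\tT})^{-1}$ times a Laurent polynomial in $q$, then extracting the moving part and regrouping into the form \eqref{eqn:tNbeta}.

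The only organizational difference is that the paper works directly on $\sF_\beta$ via $\iota_\beta = \iota_{v(\beta)}\circ\iota_{\beta\to v(\beta)}$: it identifies the $j$-th summand of $T^1_\beta$ and $T^2_\beta$ with $\iota_\beta^*(U_j^{\tT})^{-1}$ tensored with the explicit $\bC_q^*$-weights from \eqref{eqn;Vweights}, computes $T^{1,m}_\beta$ and $T^{2,m}_\beta$ separately (splitting on $d_j(\beta)\geq 0$ versus $d_j(\beta)<0$), and subtracts. Your route instead combines $V_\beta-W_\beta$ first via the Euler-characteristic formula $\chi_j$, pulls back through $\ev_\infty$, and then takes moving parts. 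The paper's shortcut is that what you call the ``main obstacle''---the identification $\ev_\infty^*\iota_{v(\beta)}^*(U_j^{\tT})^{-1}=\cM_j\otimes\Lambda_j$---is bypassed entirely: since $\sF_\beta\subset\sX_{\zeta,v(\beta)}\subset\sX_\zeta$, the line bundle on $\sF_\beta$ coming from the $G$-character $\chi^{\D_j}$ with $\tT$-weight $\Lambda_j$ is tautologically $\iota_\beta^*(U_j^{\tT})^{-1}$, and no separate verification through the evaluation map is needed. Your Laurent-polynomial identity in the last paragraph is exactly the content of the paper's case-by-case rewriting of the finite sums over $k$ as differences of infinite sums.
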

\begin{proof}
$$
T^{1,m}_\beta = \sum_{d_j(\beta)\geq 0} \iota_\beta^*(U^{\tT}_j)^{-1} \Big( \sum_{\substack{k\in \bZ\\ 0\leq k<d_j(\beta)} } q^{-d_j(\beta)+k} \Big)
=\sum_{d_j (\beta)\geq 0} \iota_\beta^* (U^{\tT}_j) ^{-1}\Big( \sum_{k=0}^\infty q^{-d_j(\beta)+k}  -\sum_{k=0}^\infty q^{ \{-d_j(\beta)\} +k }\Big). 
$$
\begin{eqnarray*}
T^{2,m}_\beta &=& \sum_{d_j(\beta)< 0}  \iota_\beta^* (U_j^{\tT})^{-1} \Big( \sum_{\substack{k\in \bZ\\  d_j(\beta)<  k<0} } q^{-d_j(\beta)+k} \Big) \\
&=& \sum_{d_j(\beta)<0} \iota_\beta^*(U_j^{\tT})^{-1} \Big( \sum_{k=0}^\infty q^{ \{-d_j(\beta)\} +k}  -\sum_{k=0}^\infty q^{ -d_j(\beta) +k }\Big)  
-\sum_{d_j(\beta) \in \bZ_{<0}} \iota_\beta^* (U_j^{\tT})^{-1} 
\end{eqnarray*}
Therefore,
\begin{equation}\label{eqn:Nbeta} 
N^\vir_\beta = T_\beta^{1,m}-T_\beta^{2,m} = \sum_{j=1}^{n+{\boldsymbol{\kappa}}} \iota_\beta^* (U^{\tT}_j)^{-1} \Big( \sum_{k=0}^\infty q^{-d_j(\beta)+k}  -\sum_{k=0}^\infty q^{ \{-d_j(\beta)\} +k }\Big) 
+ \sum_{d_j(\beta) \in \bZ_{<0}}  \iota_\beta^*(U_j^{\tT})^{-1}
\end{equation}
Note that $\iota_\beta^* = \iota_{\beta\to v(\beta)}^* \circ \iota_{v(\beta)}^*$, so 
\eqref{eqn:NtN} follows from \eqref{eqn:tNbeta} and \eqref{eqn:Nbeta}. 
 \end{proof} 
In the above proof, 
$$
\sum_{d_j(\beta) \in \bZ_{<0}}  \iota_\beta^*(U_j^{\tT})^{-1} = N_{\sF_\beta/\sX_{\zeta,v(\beta)}}. 
$$
We define
\begin{equation}
\tN_{\sF_\beta/\sX_{\zeta,v(\beta)} }  :=  \sum_{d_j(\beta) \in \bZ_{<0}}  \iota_{v(\beta)}^*(U_j^{\tT})^{-1}  \in K_{\tT}(\sX_{\zeta,v(\beta)}).
\end{equation} 
Then
\begin{equation}
\iota_{\beta\to v(\beta)}^* \tN_{\sF_\beta/\sX_{\zeta,v(\beta)}}  = N_{\sF_\beta/\sX_{\zeta,v(\beta)} }. 
\end{equation}
Recall that $V^{g(v(\beta))} = V_\beta^+\oplus V_\beta^-$. We have
$$
\tot\big( \tN_{\sF_\beta/\sX_{\zeta,v(\beta)}}\big) = \left[ \Big(  V^{g(v(\beta))}   \cap V_G^{ss}(\zeta)) \times V_\beta^-\Big)\big/G\right],\quad
\tot\big( N_{\sF_\beta/\sX_{\zeta,v(\beta)}}\big) = \left[ \Big( (V_\beta^+ \cap V_G^{ss}(\zeta)) \times V_\beta^-\Big)\big/G\right]. 
$$

\subsection{Virtual factorization and the singularity category} \label{sec:singular}
In this subsection, we fix an effective class $\beta$ and let $v=v(\beta)$.  Let $n_\pm = \dim V_\beta^\pm$. We introduce 
variables $(y_1,\ldots, y_{n_+})$ and $(p_1,\ldots, p_{n_-})$ such that 
$$
\{ y_1,\ldots, y_{n_+} \} =\{ x_i: d_i(\beta)\in \bZ_{\geq 0}\},\quad
\{ p_1,\ldots, p_{n_-} \} = \{ x_i: d_i(\beta) \in \bZ_{<0}\}. 
$$
Then
$$
V_\beta^+ = \Spec\bC[x_i: d_i(\beta)\in \bZ_{\geq 0}\} = \Spec\bC[y],\quad
V^{g(v)} = \Spec\bC[y,p]. 
$$
where
$$
\bC[y] =\bC[y_1,\ldots, y_{n_+}], \quad \bC[y,p] = \bC[y_1,\ldots, y_{n_+}, p_1,\ldots, p_{n_-}].
$$
Let
$$
W_v := W|_{V^{g(v)}} \in \bC[y,p]^G. 
$$

The inclusion $\iota_{\beta\to v}: \sF_\beta \to \sX_{\zeta,v}$ is a closed embedding, and can be identified with
the restriction of the evaluation map $\ev_\infty:\sX_{\zeta,\beta}^\circ \to \sX_{\zeta,v}$ to $(\sX_{\zeta,\beta}^\circ)^{\bC^*_q} \cong \sF_\beta$. 
Given any $(u_1,\ldots, u_{n+{\boldsymbol{\kappa}}})  \in V_\beta$,  $W(u_1,\ldots, u_{n+{\boldsymbol{\kappa}}}) \in H^0(\bP^1, \cO_{\bP^1}(-1)) =0$, so 
$\ev_\infty^*\bw_{\zeta,v}=0 \in H^0(\sX_{\zeta,\beta},\cO_{\sX_{\zeta,\beta}})$, which implies $\iota_{\beta\to v}^* \bw_{\zeta,v} = 0 \in  H^0(\sF_\beta, \cO_{\sF_\beta})$, which then implies
$W_v|_{V_\beta^+}=0$. Therefore,  $W_v$ is contained in the ideal $I= \langle p_1,\ldots, p_{n_-}\rangle \subset \bC[y,p]$, and can be written as
\begin{equation}\label{eqn:Wv}
W_v =\sum_{k=1}^{n_-} p_k W_k(y,p) 
\end{equation} 
for some $W_k(y,p) \in \bC[y,p]$ which are unique mod $I$.  We have
$$
\left.W_k(y,p)\right|_{p=0} = \left.\frac{\partial W_v}{\partial p_k}(y,p)\right|_{p=0}. 
$$
Recall that 
$$
V= \bigoplus_{i=1}^{n+\boldsymbol{\kappa}} V_i
$$
where $G$ acts on $V_i$ by character $\D_i$. Let $L_i$ denote the line bundle on $\fX_v =[V^{g(v)}/G]$ with total space
$$
\tot(L_i) = [ (V^{g(v)}\times V_i)/G].
$$
If $p_k=x_{i_k}$ then $p_k$ defines a section of $L_{i_k}$ and $W_k(y,p)$ defines a section of $L_{i_k}^{-1}$  (since $W_v$ is $G$-invariant).

We now apply the construction in Section \ref{sec:factorization} to 
$$
\bfX = \sF_\beta = [(V_\beta^+\cap V_G^{ss}(\zeta))/G], \quad
\bfC = [V_\beta^+/G],\quad  \fB^\circ = [\bullet/G]=BG.
$$
Let
$$
\bfU = \sX_{\zeta,v} = [V^{g(v)}\cap V_G^{ss}(\zeta)/G],\quad
\bfA = \fX_v =[V^{g(v)}/G]. 
$$
The closed embeddings $\iota_{\bfX}: \bfX \hookrightarrow \bfU$ and $\iota_{\bfC}:  \bfC\hookrightarrow \bfA$ are induced by the inclusion 
$V_\beta^+\subset V^{g(v)}$.  
$$
B_{\bfA}:= \pi_{\bfA/\fB^\circ}^*B = \bigoplus_{k=1}^{n_-} L_{i_k}, \quad B_{\bfU} = j_U^*B_{\bfA}=  \bigoplus_{k=1}^{n_-} j_U^* L_{i_k}
= \bigoplus_{k=1}^{n_-} \iota_v^* \cO_{\sX_\zeta}(\DD_{i_k}). 
$$
The surjective linear map 
$$
(p_1,\ldots, p_k): V^{g(v)}\to \bC^{n_-}
$$ 
descends to a regular section  $\beta_{\bfA} \in H^0(\bfA, B_{\bfA})$ which 
restricts to a regular section $\beta_{\bfU} \in H^0(\bfU, B_{\bfU})$, and 
$$
Z(\beta_{\bfA}) =\bfC,\quad Z(\beta_{\bfU})= \bfX. 
$$

The evaluation maps $\ev_{\bfA}:\bfA \to \fX_v$ and $\ev_{\bfU}:\bfU\to \sX_{\zeta,v}$ are identity maps, and 
$$
\bw_{\bfA} = \ev_{\bfA}^*\bw_v = \bw_v,\quad \bw_{\bfU} = \ev_{\bfU}^*\bw_{\zeta,v}= \bw_{\zeta,v}.
$$ 
The vector-valued polynomial function
$$
(-W_1,\ldots, -W_k): V^{g(v)}\to \bC^{n_-}
$$
descends to a section $\alpha_{\bfA} \in H^0(\bfA, B_{\bfA}^\vee)$
which restricts to  $\alpha_{\bfU} = j_{\bfU}^*\alpha_{\bfA} \in H^0(\bfU, B_{\bfU}^\vee)$.
$$
\langle \alpha_{\bfA}, \beta_{\bfA} \rangle = -\bw_{\bfA},\quad
\langle \alpha_{\bfU}, \beta_{\bfU}\rangle = -\bw_{\bfU}. 
$$ 
Then   $\{\alpha_{\bfA}, \beta_{\bfA}\}$ is a Koszul matrix factorization for $(\bfA, -\bw_{\bfA})$, and
\begin{equation}
\bK_\beta:= \{ \alpha_{\bfU},\beta_{\bfU}\} 
\end{equation}
is a Koszul matrix factorization for $(\bfU, -\bw_{\bfU}) = (\sX_{\zeta,v},  -\bw_{\zeta,v})$. 
Note that $\beta_{\bfU}$ is a regular section of $B_{\bfU}$, so $\{\alpha_{\bfU}, \beta_{\bfU}\}$ is a {\em regular} 
Koszul matrix factorization in the sense of \cite[Definition 1.6.1]{PV16} and (according to \cite[Section 1.6]{PV16}) should 
be viewed as a deformation of the Koszul complex $\{0,\beta_{\bfU}\}$. 

Let $\sX_{\zeta,v,0} = \bw_{\zeta,v}^{-1}(0)$, and let
$$
\fC : H\MF(\cX_{\zeta,v},\bw_{\zeta,v}) \to D_{\Sg}\left(\sX_{\zeta,v,0} \right)
$$
be the functor sending a matrix factorization $(E_\bullet,\delta)$ to the cokernel of $\delta_1: E_1\to E_0$.  There are two cases:
\begin{enumerate}[label=\roman*]
\item[(i)] If $\bw_{\zeta,v}$ is nonzero then it is not a zero divisor.  By \cite[Lemma 1.6.2 (i)]{PV16}, 
\begin{equation}
\fC(\{\alpha_{\bfU}, \beta_{\bfU}\}) \simeq \cO_{\sF_\beta}   \;  \textup{ in } D_{\Sg}\left(\sX_{\zeta,v,0}\right),
\end{equation} 
where $\cO_{\sF_\beta}$ is viewed as a coherent sheaf on $\sX_{\zeta,v,0}$.  By \cite[Theorem 3.14]{PV11}, 
the functor 
$$
\overline{\fC}:D\MF(\cX_{\zeta,v}, \bw_{\zeta,v}) \to D_{\Sg}\left(\sX_{\zeta,v,0}\right)
$$
induced by $\fC$ is an equivalence of triangulated categories. 
\item[(ii)]  If $\bw_{\zeta,v}=0$, then by \cite[Lemma 1.6.2 (ii)]{PV16}
\begin{equation} \label{eqn:KisG}
H^0\left(\{\alpha_{\bfU}, \beta_{\bfU}\}\right)\simeq i_{\beta\to v *}\cO_{\sF_\beta}, \quad
H^1\left(\{\alpha_{\bfU}, \beta_{\bfU}\}\right)=0. 
\end{equation} 
\end{enumerate}
 
\subsection{K-theoretic $I$-functions}  \label{sec:IK} 
For each $\beta\in \bK^{\eff}:= \bK^{\eff}(\sX_\eta,\bw_\zeta)_{0,1}$, the  derived pushforward
$$
R\iota_{\beta\to v(\beta) *} \cO_{\sF_\beta}  
$$
defines an element $G^{\tT}_\beta \in K_{\tT}(\sX_{\zeta, v(\beta)}) $ and $G_\beta \in K(\sX_{\zeta,v(\beta)})$. 
We have
\begin{equation} \label{eqn:GtT} 
G^{\tT}_\beta = \Lambda_{-1}(\tN_{\sF_\beta/\sX_{\zeta,v(\beta)}})^\vee  = \prod_{d_j(\beta)\in \bZ_{<0}}\left(1- \iota_v^*U_j^{\tT}\right) 
\;  \in  K_{\tT}\left(\sX_{\zeta, v(\beta)}\right)
\end{equation}
\begin{equation}
G_\beta = \prod_{d_j(\beta)\in \bZ_{<0}}\left(1- \iota_v^*U_j\right) \;  \in K(\sX_{\zeta,{v(\beta)}}). 
\end{equation}
\begin{remark} If $\sF_\beta$ is proper
then $R\iota_{\beta \to v(\beta)*}\cO_{\sF_\beta}$
defines an element $G_\beta^c \in K_c(\sX_{\zeta,v(\beta)})$ and $G_\beta$ lies in the ideal $K_{\ct}(\sX_{\zeta,v(\beta)}) \subset K(\sX_{\zeta,v(\beta)})$. 
\end{remark}

We define the {\em K-theoretic $\tT$-equivariant $I$-function} of the GLSM $(V,G,\bC_R^*, 0, \zeta)$  to be
$$
I^K_{\tT}= \sum_{v\in \mathrm{Box}(\bSi)} I^K_{\tT,v}
$$
where
\begin{equation}\label{eqn:IK-T} 
\begin{aligned}
I^K_{\tT,v}  :=&   \sum_{\substack {\beta \in \bK^\eff \\ v(\beta)=v} } y^\beta (R \iota_{\beta\to v(\beta)})_* \left( \frac{\cO_{\sF_\beta}}{\Lambda_{-1}(N^\vir_\beta)^\vee }  \right) 
  =   \sum_{\substack {\beta \in \bK^\eff \\ v(\beta)=v} } y^\beta (R \iota_{\beta\to v(\beta)})_* \left( \frac{\cO_{\sF_\beta}}{ \iota_{\beta\to v(\beta)}^*\Lambda_{-1}(\tN^\vir_\beta)^\vee }  \right) \\
 =&   \sum_{\substack {\beta \in \bK^\eff \\ v(\beta)=v} } y^\beta \frac{G^{\tT}_\beta}{\Lambda_{-1}(\tN^\vir_\beta)^\vee } 
 =  \sum_{\substack {\beta \in \bK^\eff \\ v(\beta)=v} } y^\beta \frac{\prod_{k=0}^\infty(1-\iota_v^*U_j^{\tT} q^{k +\{ -d_j(\beta) \} } ) }{\prod_{k=0}^\infty(1-\iota_v^*U_j^{\tT}q^{k -d_j(\beta)})}
\end{aligned}
\end{equation}
In Equation \eqref{eqn:IK-T} above, the second, third, and fourth equalities follow from \eqref{eqn:NtN}, \eqref{eqn:GtT}, and \eqref{eqn:tNbeta}, respectively. 

The non-equivariant limit of $I^K_{\tT}$ is 
$$
I^K =\sum_{v\in \mathrm{Box}(\bSi)} I^K_v
$$
where
$$
I^K_v  = \sum_{\substack {\beta \in \bK^\eff \\ v(\beta)=v} } y^\beta \frac{G_\beta}{\Lambda_{-1}(\tN^\vir_\beta)^\vee } 
= \sum_{\substack {\beta \in \bK^\eff \\ v(\beta)=v} } y^\beta \prod_{j=1}^{n+{\boldsymbol{\kappa}}} \frac{\prod_{k=0}^\infty(1-\iota_v^*U_j q^{k +\{ -d_j(\beta) \} } ) }{\prod_{k=0}^\infty(1-\iota_v^*U_j q^{k -d_j(\beta)})}.
$$

\begin{remark} \label{IK-compact}
If $\sF_\beta$ is proper for all $\beta\in \bK^\eff$ then we may define $I_c^K$ which takes values in
$\displaystyle{\bigoplus_{v\in \mathrm{Box}(\bSi)} K_c(\cX_{\zeta,v})} :$
$$
I^K_c =\sum_{v\in \mathrm{Box}(\bSi)} I^K_{c,v}
\quad  \text{where} \quad
I^K_{c,v}  := \sum_{\substack {\beta \in \bK^\eff \\ v(\beta)=v} } y^\beta \frac{G^c_\beta}{\Lambda_{-1}(\tN^\vir_\beta)^\vee }. 
$$
\end{remark}

 For every $\beta\in K^\eff$, let
 $$
 [\bK_\beta] \in K\left(\MF(\sX_{\zeta,v},\bw_{\zeta,v})\right)
$$
be the K-theory class of the Koszul matrix factorization $\bK_\beta$ defined in Section \ref{sec:singular}. 
Then $G_\beta$ is the image of $[\bK_\beta]$ under $K\left(\MF(\sX_{\zeta,v},\bw_{\zeta,v})\right) \lra K(\sX_{\zeta,v})$. 
We define the {\em K-theoretic GLSM $I$-function} of the GLSM $(V,G, \bC_R^*,W,\zeta)$ to be
$$
I^K_{\bw} =\sum_{v\in \mathrm{Box}(\bSi)} I^K_{\bw,v}
\quad \text{where} \quad
I^K_{\bw,v}  : = \sum_{\substack {\beta \in \bK^\eff \\ v(\beta)=v} } y^\beta \frac{[\bK_\beta]}{\Lambda_{-1}(\tN^\vir_\beta)^\vee }.
$$

\subsection{Cohomological $I$-functions} \label{sec:IH} 
We have a proper pushforward 
\begin{equation}\label{eqn:pushforward-HT} 
(\iota_{\beta\to v(\beta)})_* : H^*_{\tT}(\sF_\beta) \to  H^*_{\tT}(\sX_{\zeta,v(\beta)})
\end{equation}
which is a morphism of $H_{\tT}^*\left(\sX_{\zeta,v(\beta)} \right)$-modules:  for any $a\in H^*_{\tT}\left(\sX_{\zeta,v(\beta)}\right)$ and 
$b\in H^*_{\tT}(\sF_\beta)$, we have
$$
(\iota_{\beta\to v(\beta)})_* \left( \iota_{\beta\to v(\beta)}^*a \cup b\right) =  a \cup (\iota_{\beta\to v(\beta)})_* b. 
$$
The image is the principal ideal generated by the $\tT$-equivariant Poincar\'{e} dual $F_\beta^{\tT}$ of $\sF_\beta$ in $\sX_{\zeta,v(\beta)}$, where 
\begin{equation}\label{eqn:FtT}
F^{\tT}_\beta = (\iota_{\beta \to v(\beta)})_* 1 = e_{\tT}(\tN_{\sF_\beta/\sX_{\zeta,v(\beta)}})  = \prod_{d_j(\beta) \in \bZ_{<0}}  \iota_{v(\beta)}^*u_j^{\tT}  \in H^*_{\tT} (\sX_{\zeta,v(\beta)}). 
\end{equation} 
The non-equivariant Poincar\'{e} dual of $\sF_\beta$ is 
\begin{equation}
F_\beta = \prod_{d_j(\beta) \in \bZ_{<0}} \iota_{v(\beta)}^*u_j  \in H^*\left(\sX_{\zeta, v(\beta)}\right).  
\end{equation} 
\begin{remark}
If $\sF_\beta$ is proper then it also has a non-equivariant Poincar\'{e} dual $F_\beta^c \in H^*_c(\sX_{\zeta,v(\beta)})$, and
$F_\beta$ is the image of $F_\beta^c$ under $H^*_c(\sX_{\zeta, \beta}) \to H^*(\sX_{\zeta,v(\beta)})$; in particular, $F_\beta$ lies
in the ideal  $H^*_{\ct}(\sX_{\zeta, v(\beta)})\subset H^*(\sX_{\zeta,v(\beta)})$. 
\end{remark}

We define the (cohomological) {\em $\tT$-equivariant $I$-function} of the GLSM $(V,G,\bC^*_R, W,\zeta)$ to be
$$
I_{\tT}(y,z)= \sum_{v\in \mathrm{Box}(\bSi)} I_{\tT,v}(y,z) \one_v
$$
where
\begin{equation} \label{eqn:I-T} 
\begin{aligned}
I_{\tT,v}(y,z) :=&  e^{(\sum_{a=1}^{\boldsymbol{\kappa}} (\log y_a) \iota_v^* p_a^{\tT})/z}\sum_{\substack {\beta \in \bK^\eff \\ v(\beta)=v} } y^\beta 
\left(\iota_{\beta\to v(\beta)}\right)_* \left( \frac{1}{e_{\tT\times \bC_q^*}(N_\beta^\vir)}  \right)\\
=&   e^{(\sum_{a=1}^{\boldsymbol{\kappa}} (\log y_a) \iota_v^* p_a^{\tT})/z}\sum_{\substack {\beta \in \bK^\eff \\ v(\beta)=v} } y^\beta 
\left(\iota_{\beta\to v(\beta)}\right)_* \left( \frac{1}{ \iota_{\beta\to v(\beta)}^*e_{\tT\times \bC_q^*}(\tN_\beta^\vir)}  \right)\\
=& e^{(\sum_{a=1}^{\boldsymbol{\kappa}} (\log y_a) \iota_v^*p_a^{\tT})/z}\sum_{\substack {\beta \in \bK^\eff \\ v(\beta)=v} } y^\beta 
\frac{F_\beta^{\tT}}{e_{\tT\times \bC_q^*}(\tN_\beta^\vir)} \\ 
=&  e^{(\sum_{a=1}^{\boldsymbol{\kappa}} (\log y_a) \iota_v^*p_a^{\tT})/z}\sum_{\substack {\beta \in \bK^\eff \\ v(\beta)=v} } y^\beta  \prod_{j=1}^{n+{\boldsymbol{\kappa}}} 
\frac{\prod_{k=0}^\infty(\iota_v^*u_j^{\tT}  + (- \{-\langle \D_j, \beta\rangle +q_j/2\} - k)z ) }{\prod_{k=0}^\infty(\iota_v^* u_j^{\tT} + (\langle \D_j,\beta)-q_j/2 -k) z)} \\
=& \frac{e^{\left(\sum_{a=1}^{\boldsymbol{\kappa}} (\log y_a) \iota_v^*p_a^{\tT}\right)/z} }{z^{\age(v)- \hat{q}}} \sum_{\substack {\beta \in \bK^\eff \\ v(\beta)=v} } 
\frac{y^\beta}{z^{\langle \sum_{j=1}^{n+{\boldsymbol{\kappa}}} \D_j, \beta\rangle}}  \prod_{j=1}^{n+{\boldsymbol{\kappa}}}
\frac{\Gamma\left(1+\frac{\iota_v^*u_j^{\tT}}{z} - \{ -\langle \D_j,\beta\rangle +\frac{q_j}{2} \} \right)}{ \Gamma\left(1+\frac{\iota_v^*u_j^{\tT}}{z} + \langle \D_j,\beta\rangle -\frac{q_j}{2}\right) }
\end{aligned} 
\end{equation} 
In Equation \eqref{eqn:I-T} above, the second, third, and fourth equalities follow from \eqref{eqn:NtN}, \eqref{eqn:FtT}, and \eqref{eqn:tNbeta}, respectively.  

Recall that
$$
\deg u_j^{\tT} = \deg p_a^{\tT} = \deg z=2. 
$$
We define
$$
\deg y^\beta = 2 \Big\langle \sum_{j=1}^{n+{\boldsymbol{\kappa}}} \D_j, \beta \Big\rangle, \quad \deg (\one_v)  = 2 (\age(v) - \hat{q}).
$$
Then $I_{\tT}(y,z)$ is homogeneous of degree zero.  Setting $q_j=0$ recovers the formula of (extended) $I$-functions in 
\cite[Section 5]{CCIT} and \cite[Section 6.2.1]{CIJ}.  

The non-equivariant limit of $I_{\tT}(y,z)$ is 
$$
I(y,z)= \sum_{v\in \mathrm{Box}(\bSi)} I_v(y,z) \one_v
$$
where
\begin{equation}\label{eqn:I}
\begin{aligned}
I_v(y,z)= & e^{\left(\sum_{a=1}^{\boldsymbol{\kappa}} (\log y_a) \iota_v^* p_a\right) /z}\sum_{\substack {\beta \in \bK^\eff \\ v(\beta)=v} } y^\beta 
\left(\iota_{\beta\to v(\beta)}\right)_*\frac{1}{e_{\bC_q^*}(N_\beta^\vir)}  \\
=& e^{\left(\sum_{a=1}^{\boldsymbol{\kappa}} (\log y_a) \iota_v^*p_a \right) /z}\sum_{\substack {\beta \in \bK^\eff \\ v(\beta)=v} } y^\beta 
\frac{F_\beta}{e_{\bC_q^*}(\tN_\beta^\vir)} \\
=&  \frac{e^{\left(\sum_{a=1}^{\boldsymbol{\kappa}} (\log y_a) \iota_v^*p_a \right)/z} }{z^{\age(v)- \hat{q}}} \sum_{\substack {\beta \in \bK^\eff \\ v(\beta)=v} } 
\frac{y^\beta}{z^{\langle \sum_{j=1}^{n+{\boldsymbol{\kappa}}} \D_j, \beta\rangle}} 
 \prod_{j=1}^{n+{\boldsymbol{\kappa}}}
\frac{\Gamma\left(1+\frac{\iota_v^*u_j}{z} - \{ -\langle \D_j,\beta\rangle +\frac{q_j}{2} \} \right)}{ \Gamma\left(1+\frac{\iota_v^*u_j}{z} + \langle \D_j,\beta\rangle -\frac{q_j}{2}\right) }
\end{aligned} 
\end{equation} 

For every $\beta\in \bK^\eff$, 
$$
\td\ch^{\sX_{\zeta,v}}_{\sZ_{\zeta,v}}(\bK_\beta) \in 
\bH^{even}_{\sZ_{\zeta,v}}\left(\sX_{\zeta,v}, (\Omega^\bullet_{\sX_{\zeta,v}}, -d\bw_{\zeta,v}) \right)
$$
where $\sZ_{\zeta,v} = \Crit(\bw_{\zeta,v})$.   We define the {\em GLSM $I$-function} of the GLSM $(V,G,\bC_R^*, W, \zeta)$ to be
$$
I_{\bw}(y,z)= \sum_{v\in \mathrm{Box}(\bSi)} I_{\bw,v}(y,z)
$$
where
\begin{equation}
I_{\bw,v}(y,z) :=  e^{\left(\sum_{a=1}^{\boldsymbol{\kappa}} (\log y_a) \iota_v^*p_a\right) /z}\sum_{\substack {\beta \in \bK^\eff \\ v(\beta)=v} } y^\beta 
\frac{1}{e_{\bC_q^*}(\tN_\beta^\vir)}\cdot  \td\ch^{\sX_{\zeta,v} }_{ \sZ_{\zeta,v} }(\bK_\beta). 
\end{equation}

\begin{lemma}
If $\bw_{\zeta,v}=0$ (which is true when $v$ is narrow) then 
$$
I_{\bw,v}(y,z) = I_v(y,z). 
$$
\end{lemma}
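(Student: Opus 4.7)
The goal is to show that when $\bw_{\zeta,v} = 0$ the factor $\td\ch^{\sX_{\zeta,v}}_{\sZ_{\zeta,v}}(\bK_\beta)$ appearing in $I_{\bw,v}(y,z)$ collapses to the Poincar\'{e} dual $F_\beta$ appearing in $I_v(y,z)$. Once this is done, the two formulas match term by term in $\beta$.

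The first step is to observe that when $\bw_{\zeta,v} = 0$ one has $W_v = W|_{V^{g(v)}} = 0$, so in the decomposition $W_v = \sum_{k=1}^{n_-} p_k W_k(y,p)$ of Section~\ref{sec:singular} we may take every $W_k = 0$. Consequently $\alpha_{\bfU} = 0$ and the virtual factorization $\bK_\beta = \{\alpha_{\bfU},\beta_{\bfU}\}$ degenerates to the $2$\-periodization of the ordinary Koszul complex $\bigl[\Lambda^\bullet B_{\bfU}^\vee,\, i_{\beta_{\bfU}}\bigr]$ associated to the regular section $\beta_{\bfU}$ of $B_{\bfU}$ cutting out $\sF_\beta \subset \sX_{\zeta,v}$. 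This is precisely the content of \eqref{eqn:KisG}: $H^0(\bK_\beta) \simeq \iota_{\beta\to v*}\cO_{\sF_\beta}$ and $H^1(\bK_\beta) = 0$.

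The second step is to identify the target of $\td\ch^{\sX_{\zeta,v}}_{\sZ_{\zeta,v}}$ in the special case $\bw_{\zeta,v} = 0$. In this case $\sZ_{\zeta,v} = \Crit(\bw_{\zeta,v}) = \sX_{\zeta,v}$ and the twisted de Rham differential $-d\bw_{\zeta,v}$ is zero, so the hypercohomology $\bH^{\mathrm{even}}_{\sZ_{\zeta,v}}\bigl(\sX_{\zeta,v},(\Omega^\bullet_{\sX_{\zeta,v}},-d\bw_{\zeta,v})\bigr)$ is canonically identified with $H^{\mathrm{even}}(\sX_{\zeta,v};\bC)$. Under this identification, unwinding the Atiyah\--class construction of Favero\--Kim (Section~B.4 of \cite{FK}, following Kim\--Polishchuk \cite{KP22}) shows that $\td\ch^{\sX_{\zeta,v}}_{\sX_{\zeta,v}}$ applied to a $2$\-periodic complex reduces to the classical localized Chern character weighted by the Todd class of $B_{\bfU}$, in exactly the manner recalled in the classical formula of Section~\ref{sec:cosection}.

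With these identifications in place, the third step is purely classical: for the Koszul resolution of a regular section $s$ of a locally free sheaf $B$ cutting out a smooth closed substack $Y$ of codimension $\rank B$, one has the standard identity
\begin{equation*}
\td(B)\cdot \ch^{X}_{Y}\bigl(\Lambda^\bullet B^\vee, i_s\bigr) \;=\; c_{\rank B}(B,s) \;=\; e(\tN_{Y/X}),
\end{equation*}
which is the MF/Koszul analogue of the identity $\td(\Ob_{\bfX}^\vee)\ch^{\bfX}_{\bfZ}(\bK(\delta^\vee)) = c_{r_1}(\Ob_{\bfX}^\vee,\delta^\vee)$ recalled in Section~\ref{sec:cosection}. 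Applying this with $B = B_{\bfU}$, $s = \beta_{\bfU}$, $X = \sX_{\zeta,v}$, and $Y = \sF_\beta$, and using the computation $e(\tN_{\sF_\beta/\sX_{\zeta,v}}) = \prod_{d_j(\beta)\in\bZ_{<0}} \iota_v^* u_j = F_\beta$ from \eqref{eqn:FtT}, yields $\td\ch^{\sX_{\zeta,v}}_{\sZ_{\zeta,v}}(\bK_\beta) = F_\beta$. Plugging this into the definition of $I_{\bw,v}(y,z)$ and comparing with the formula for $I_v(y,z)$ finishes the proof.

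The main obstacle is the second step: one must verify that the matrix\-factorization localized Chern character of \cite{FK, KP22}, which is defined via a rather elaborate Atiyah\--class construction in the category $D\MF$, really does specialize to the usual topological Chern character times $\td(B_{\bfU})$ when $\bw = 0$ and the Koszul MF degenerates to an ordinary Koszul resolution. This compatibility is implicit in the Favero\--Kim setup, and once it is spelled out the rest of the argument is routine.
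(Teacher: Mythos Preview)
Your argument is correct and is essentially the same as the paper's, only far more detailed: the paper's entire proof consists of the observation that for $\bw_{\zeta,v}=0$ one has $\sZ_{\zeta,v}=\sX_{\zeta,v}$, the target hypercohomology becomes $H^*(\sX_{\zeta,v})$, and $\td\ch^{\sX_{\zeta,v}}_{\sZ_{\zeta,v}}(\bK_\beta)=F_\beta$, with no further justification. Your three steps unpack precisely why that last equality holds, and the ``obstacle'' you flag in step two is exactly the compatibility the paper takes for granted.
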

\begin{proof}
If $\bw_{\zeta,v}=0$  then  $\sZ_{\zeta,v}=\sX_{\zeta,v}$ and 
$$
\bH^{even}_{\sZ_{\zeta,v}}\left(\sX_{\zeta,v}, (\Omega^\bullet_{\sX_{\zeta,v}}, -d\bw_{\zeta,v})\right) = 
\bH^{even}_{\sX_{\zeta,v}}\left(\sX_{\zeta,v}, (\Omega^\bullet_{\sX_{\zeta,v}}, 0)\right) = 
H^*(\sX_{\zeta,v}),
\quad  \td\ch^{\sX_{\zeta,v}}_{\sZ_{\zeta,v}}(\bK_\beta) = F_\beta. 
$$
\end{proof} 

\begin{remark}\label{I-compact} 
If $\sF_\beta$ is proper for all $\beta\in \bK^\eff$ then we may define $I_c(y,z)$ which takes values in 
$\displaystyle{ \bigoplus_{v\in \mathrm{Box}(\bSi)}H_c^*(\sX_{\zeta,v}) }:$
$$
I_c(y,z)= \sum_{v\in \mathrm{Box}(\bSi)} I_{c,v}(y,z) \one_v
$$
where
\begin{equation}\label{eqn:I}
I_{c,v}(y,z) :=  e^{\left(\sum_{a=1}^{\boldsymbol{\kappa}} (\log y_a) \iota_v^*p_a\right) /z}\sum_{\substack {\beta \in \bK^\eff \\ v(\beta)=v} } y^\beta 
\frac{F^c_\beta}{e_{\bC_q^*}(\tN_\beta^\vir)}.
\end{equation} 
\end{remark}

\subsection{Central charges}
Recall that $H^*_{\tT}(\bullet;\bC) = H^*(B\tT;\bC) =\bC[\lambda_1,\ldots, \lambda_{n+\boldsymbol{\kappa}} ] =: \bC[\lambda]$. 
Following Fang \cite[Section 4.2]{Fa20}, we define a map 
\begin{eqnarray*}
\tch_z: K_{\tT}(\sX_\zeta)   &\lra &  \bigoplus_{v\in \mathrm{Box}(\bSi_\zeta)}  H^*_{\tT}(\sX_{\zeta,v};\bC) \otimes_{\bC[\lambda]} \bC[\lambda] (( z^{-1})) \one_v, \\
\cE & \mapsto& \sum_{v\in \mathrm{Box}(\bSi_\zeta)} \tch_z(\cE)_v \one_v,  
\end{eqnarray*}
by the following two properties which uniquely characterizes it. 
\begin{enumerate}
\item If $\cE_1$, $\cE_2$ are $\tT$-equivariant vector bundles on $\sX_\zeta$ then
$$
\tch^{\tT}_z(\cE_1\oplus \cE_2)_v = \tch^{\tT}_z(\cE_1)_v +\tch^{\tT}_z(\cE_2)_v,\quad
\tch^{\tT}_z(\cE_1\otimes \cE_2)_v = \tch^{\tT}_z(\cE_1)_v \tch^{\tT}_z(\cE_2)_v. 
$$
\item If $\cL$ is a $\tT$-equivariant line bundle on $\sX_\zeta$ then 
$$
\tch^{\tT}_z(\cL)_v = e^{2\pi\sqrt{-1}(\age_v(\cL)- \frac{(c_1)_{\tT}( \iota_v^*\cL)}{z})} 
$$
\end{enumerate}
\begin{remark} Recall that $\deg(z)=2$, so $\tch^{\tT}_z(\cL)_v$ is homogeneous of degree 0 and is determined
by $\tch^{\tT}_1(\cL)_v = \tch^{\tT}_z(\cL)_v\Big|_{z=1}$: 
$$
\tch^{\tT}_z(\cL)_v =  z^{-\deg/2} \tch^{\tT}_1(\cL). 
$$
\end{remark}

We define
$$
\tGa^{\tT}_z = \sum_{v\in \mathrm{Box}(\bSi)}  \prod_{j=1}^{n+{\boldsymbol{\kappa}}}  \exp\Big( \log z(1+\frac{ \iota_v^*u_j^{\tT}}{z} -\age_v(U_j^{-1})  \Big)
\Gamma\Big( 1+ \frac{ \iota_v^*u_j^{\tT} }{z}   - \age_v (U_j^{-1}) \Big) \one_v
$$

Recall that the  $\tT$-equivariant state space $\cH_{\tT}$ of $(V,G,\bC^*_R,0,\zeta)$ is
$$
\cH_{\tT} =\bigoplus_{v\in \mathrm{Box}(\bSi_\zeta)} H^*_{\tT}(\sX_{\zeta,v};\bC)[2 \left(\age(v)-\hq \right) ]
$$
as a graded vector space over $\bC$, where each $H^*_{\tT}(\sX_{\zeta,v};\bC)$ is a $\bC[\lambda]$-module. 
Let $( \ , \  )_{\tT}$ denote the non-degenerate pairing
$$
\cH_{\tT}\otimes_{\bC[\lambda]} \bC(\lambda)  \times \cH_{\tT}\otimes_{\bC[\lambda]} \bC(\lambda) \lra \bC(\lambda). 
$$

\begin{definition}[$\tT$-equivariant central charge]
Given $\fB\in D^b_{\tT}(\sX_\zeta)$, we define its  $\tT$-equivariant central charge by 
\begin{equation}
Z_{\tT}(\fB) := \left(   I_{\tT}(y,-z), \tGa_z^{\tT} \tch_z^{\tT}([\fB])\right)_{\tT} 
\end{equation}   
where $[\fB]\in K_{\tT}(\sX_\zeta)$ is the K-theory class of $\fB$. 
\end{definition} 

By definition, $Z_{\tT}(\fB)$ depends only on $[\fB]\in K_{\tT}(\sX_\zeta)$. We use the notation in Section \ref{sec:line-bundles}  and  make the following observations:
\begin{itemize} 
\item Any class in $K_{\tT}(\sX_\zeta)$ can be written as
$$
\sum_{ \St \in \bL^\vee}  c_{\St} \bfL_{\St}^{\tT}
$$
where $c_{\St} \in K_{\tT}(\bullet) = \bZ[\Lambda_1^{\pm 1},\ldots, \Lambda_{n+\boldsymbol{\kappa}}^{\pm 1} ]$, and 
$$
\bfL_{\St}^{\tT} =\prod_{a=1}^{{\boldsymbol{\kappa}}} (P^{\tT}_a)^{-\langle \St,  \xi_a\rangle}  \in \Pic_{\tT}(\cX_{\zeta})
$$
is characterized by 
$$
(c_1)_{\tT}\big(\bfL_{\St}^{\tT}\big) = \sum_{a=1}^{{\boldsymbol{\kappa}}} \langle \St, \xi_a \rangle p_a^{\tT}. 
$$
\item The map $\fB \lra Z_{\tT}(\fB)$ is $K_{\tT}(\bullet)$-linear. 
\end{itemize} 
Therefore, it suffices to compute $Z_{\tT}(\bfL^{\tT}_{\St})$ for all $\St\in \bL^\vee$. For $j\in \{1,\ldots, n+\boldsymbol{\kappa} \}$, we let 
\begin{equation}\label{eqn:alpha-lambda}
\alpha_j =  \frac{\lambda_j}{z}+ \frac{q_j}{2}. 
\end{equation}
\begin{Theorem}
For any $\St \in \bL^\vee$, we have
$$
Z_{\tT}(\bfL^{\tT}_{\St} ) = \sum_{I\in\cA_\zeta^{\min}} Z_{\tT}(\bfL^{\tT}_{\St} )_I,
$$ 
where
\begin{eqnarray*}
Z_{\tT}(\bfL^{\tT}_{\St} )_I &=& \frac{1}{ |G_{\sigma_{I'}}|} \sum_{m\in (\bZ_{\geq 0})^I}  \prod_{i' \in I' }\frac{\Gamma\left( \langle \D_{i'}, - \sum_{i\in I} (m_i +\alpha_i)\D_i^{*I}\rangle 
+ \alpha_{i'}   \right) } {z^{\langle \D_{ i'},  \sum_{i\in I} (m_i +\alpha_i)\D_i^{*I}\rangle - \alpha_{i'}  }}  \prod_{i\in I}\frac{1}{(-z)^{m_i} m_i!} \\
&& \cdot \exp(\langle  (\sum_{a=1}^{\boldsymbol{\kappa}} \log y_a \xi_a)  -2\pi\sqrt{-1} \St, \sum_{i\in I} (m_i +\alpha_i) D_i^{* I}\rangle) 
\end{eqnarray*} 
\end{Theorem}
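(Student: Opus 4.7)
The approach is to compute the $\tT$-equivariant pairing
$Z_{\tT}(\bfL_{\St}^{\tT}) = (I_{\tT}(y,-z), \tGa_z^{\tT}\tch_z^{\tT}(\bfL_{\St}^{\tT}))_{\tT}$
by Atiyah--Bott localization with respect to the residual $T$-action on each component $\sX_{\zeta,v}$ of the inertia stack $I\sX_\zeta$. The $T$-fixed locus of $\sX_{\zeta,v}$ is a finite union of stacky points $\fp_I \simeq BG_{\sigma_{I'}}$ indexed by minimal anticones $I\in\cA^{\min}_\zeta$ with $v\in\mathrm{Box}(\sigma_{I'})$; each contributes an orbifold factor $1/|G_{\sigma_{I'}}|$ together with the reciprocal of the equivariant Euler class of its tangent space in $\sX_{\zeta,v}$. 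The goal is to identify the aggregate localized contribution at $\fp_I$, summed over all compatible $v$'s and over $\beta \in \bK^{\eff}_I$, with the term $Z_{\tT}(\bfL_{\St}^{\tT})_I$ in the statement.

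\emph{First step.} I substitute the explicit formula for $I_{\tT,v}$ from \eqref{eqn:I-T} and evaluate at $\fp_I$ using the restriction formulas \eqref{eqn:u-at-I}, \eqref{eqn-p-at-I}, and \eqref{eqn:u-p-lambda}. One finds $\iota_I^*u_i^{\tT}=0$ for $i\in I$, and a short computation gives, under the reparametrization $m_i := \langle\D_i,\beta\rangle - q_i/2 \in \bZ_{\geq 0}$ (equivalently $\beta = \sum_{i\in I}(m_i+q_i/2)\D_i^{*I}$), the key identity
\[
\Bigl\langle \D_{i'}, \sum_{i\in I}(m_i+\alpha_i)\D_i^{*I}\Bigr\rangle \;=\; \langle \D_{i'},\beta\rangle + \frac{\iota_I^*u_{i'}^{\tT}+\lambda_{i'}}{z}, \qquad i'\in I'.
\]
This bijection between $\bK^{\eff}_I$ and $(\bZ_{\geq 0})^I$ replaces the double sum over $(\beta,v)$ with $v(\beta)=v$ by a single sum over $m$; the assignment $v(\beta)\in\mathrm{Box}(\sigma_{I'})$ is automatically encoded in the fractional parts of the $m_i+\alpha_i$. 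For $i\in I$, the $j=i$ factor in the last line of \eqref{eqn:I-T} collapses to $\Gamma(1-\{-m_i\})/\Gamma(1+m_i) = 1/m_i!$; the signs $(-1)^{m_i}$ arise from the substitution $z\mapsto -z$ in $I_{\tT}(y,-z)$ combined with the $z^{-\langle\sum_j\D_j,\beta\rangle}$ prefactor, producing the $(-z)^{-m_i}$ denominators in the claim.

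\emph{Second step.} The age shifts in $\tGa_z^{\tT}$ and $\tch_z^{\tT}$ must be matched with the fractional parts appearing in $I_{\tT,v}$. Using \eqref{eqn:kv}, for $v = v(\beta)$ and $i'\in I'$ with $c_{i'} = \{-d_{i'}(\beta)\}$, the monodromy of the character-$\D_{i'}$ line bundle $U_{i'}^{-1}$ is $e^{2\pi\sqrt{-1}c_{i'}}$, so $\age_v(U_{i'}^{-1}) = \{-d_{i'}(\beta)\} = \{-\langle\D_{i'},\beta\rangle + q_{i'}/2\}$. The $\tGa_z^{\tT}$-factor evaluated at $\fp_I$ on the sector $\inv(v)$ then combines with the $I$-function Gamma factor via the reflection identity $\Gamma(x)\Gamma(1-x) = \pi/\sin(\pi x)$ to produce a single Gamma $\Gamma\bigl(\langle\D_{i'},-\sum_i(m_i+\alpha_i)\D_i^{*I}\rangle+\alpha_{i'}\bigr)$ per $i'\in I'$; the accompanying power of $z$ is supplied by the $e^{\log z(\cdots)}$ prefactor in the definition of $\tGa_z^{\tT}$.

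\emph{Third step.} The exponential factor in the claim is obtained by combining $\tch_z^{\tT}(\bfL_{\St}^{\tT})_v = \exp\bigl(2\pi\sqrt{-1}\age_v(\bfL_{\St}^{\tT}) - (c_1)_{\tT}(\iota_v^*\bfL_{\St}^{\tT})/z\bigr)$, evaluated at $\fp_I$ via \eqref{eqn-p-at-I} and using $(c_1)_{\tT}(\bfL_{\St}^{\tT})=\sum_a\langle\St,\xi_a\rangle p_a^{\tT}$, with the prefactor $e^{(\sum_a\log y_a\,\iota_v^*p_a^{\tT})/(-z)}$ and with $y^\beta$ from $I_{\tT,v}(y,-z)$. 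Expressing $\beta = \sum_{i\in I}(m_i+q_i/2)\D_i^{*I}$ and regrouping yields precisely $\exp\bigl(\langle\sum_a\log y_a\xi_a - 2\pi\sqrt{-1}\St,\;\sum_{i\in I}(m_i+\alpha_i)\D_i^{*I}\rangle\bigr)$.

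\emph{Main obstacle.} The delicate part is the bookkeeping of the inertia-stack pairing between the $v$-sector of $I_{\tT}$ and the $\inv(v)$-sector of $\tGa_z^{\tT}\tch_z^{\tT}$: this involves the relation $\age_v(U_j^{-1}) + \age_{\inv(v)}(U_j^{-1}) \in \{0,1\}$, the corresponding sign flip of tangent/normal weights under $\inv$, and the precise form of the reflection formula that merges the two Gamma factors into one. Once these alignments are verified, the contributions from indices with $d_j(\beta)\in\bZ_{<0}$ (which produce the Poincar\'e-dual factor $F_\beta^{\tT}$) and those with $d_j(\beta)\notin\bZ$ or $d_j(\beta)\in\bZ_{\geq 0}$ coalesce into the single uniform product over $I'$ in the statement, and the formula for $Z_{\tT}(\bfL_{\St}^{\tT})_I$ follows by collecting the factors identified above.
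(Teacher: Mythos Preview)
Your proposal is correct and follows essentially the same approach as the paper: both compute the pairing by Atiyah--Bott localization on each inertia component $\sX_{\zeta,v}$, summing over the $T$-fixed points $\fp_I$ indexed by pairs $(I,v)$ with $I\in\cA_\zeta^{\min}$ and $v\in\mathrm{Box}(\sigma_{I'})$, with the orbifold weight $1/|G_{\sigma_{I'}}|$ and the Euler class of $T_{\fp_I}\sX_{\zeta,v}$ in the denominator. The paper's own proof is extremely terse---it writes down the localization formula $A_I = |G_{\sigma_{I'}}|^{-1}\sum_{v}\iota_{I,v}^*I_{\tT,v}\cdot\iota_{I,\inv(v)}^*(\tGa_z\tch_z)/e_{\tT}(T_{\fp_I}\sX_{\zeta,v})$ and then simply asserts the simplified form---whereas your outline fills in precisely the steps (the bijection $\bK^{\eff}_I\leftrightarrow(\bZ_{\geq 0})^I$, the restriction identities for $u_j^{\tT}$ and $p_a^{\tT}$ at $\fp_I$, the matching of age shifts between the $v$- and $\inv(v)$-sectors, and the collapse of the Gamma ratios) that the paper leaves implicit.
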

\begin{proof} Let
$$
\Xi =\{ (I,v): I\in \cA^{\min}_\zeta, v\in \mathrm{Box}(\sigma_{I'})\} =\{ (I,v): I\in \cA^{\min}_\zeta, \fp_I\subset \sX_{\zeta,v}\}.
$$
Given a  pair $(I,v)\in \Xi$, let $\iota_{I,v}: \fp_I\hookrightarrow \sX_{\zeta,v}$ be the inclusion.  
Then 
$$ 
Z_{\tT}(\bfL^{\tT}_{\St}) =\sum_{I\in \cA_\zeta^{\min}} A_I
$$
where 
\begin{eqnarray*}
A_I &=& \frac{1}{|G_{\sigma_I}|} \sum_{v\in \mathrm{Box}(\sigma_{I'}) }
\frac{  \iota_{I,v}^*I_{\tT,v} \iota_{I,\inv(v)}^*\iota^*_{\inv(v)} \left(\tGa_z \iota_{I,\inv(v)}\tch_z(\bfL^{\tT}_{\St})\right)  }
{e_{\tT}(T_{\fp_I}\cX_v)} \\
&=&  \frac{1}{ |G_{\sigma_{I'}}|} \sum_{m\in (\bZ_{\geq 0})^I}  \prod_{i' \in I' }
\frac{\Gamma\left( \langle \D_{i'}, - \sum_{i\in I} (m_i +\alpha_i)\D_i^{*I}\rangle 
+ \alpha_{i'}   \right) } {z^{\langle \D_{ i'},  \sum_{i\in I} (m_i +\alpha_i)\D_i^{*I}\rangle - \alpha_{i'}  }}  \prod_{i\in I}\frac{1}{(-z)^{m_i} m_i!} \\
&& \cdot \exp(\langle  (\sum_{a=1}^{\boldsymbol{\kappa}} \log y_a \xi_a)  -2\pi\sqrt{-1} \St, \sum_{i\in I} (m_i +\alpha_i) D_i^{* I}\rangle) 
\end{eqnarray*}

\end{proof}

Suppose that $\sum_{i=1}^{n+\boldsymbol{\kappa}} \D_i =0$ (Calabi-Yau condition), and let $C$ be the maximal
cone in the secondary fan which contains the stability condition $\zeta$. Then 
$$
Z^{\tT}(\bfL_{\St}^{\tT})\Big|_{z=1} = Z_{D^2}(\bfL_{\St})_C \Big|_{\substack{ \theta =-\sum_{a=1}^{\boldsymbol{\kappa}}\log y_a \xi_a\\ 
\alpha_j = \lambda_j+q_j/2}  }
$$
where $Z_{D^2}(\bfL_{\St})_C$ is the chamber hemisphere partition function  in Definition  \ref{chamber-hemisphere}.


Recall that the GLSM state space $\cH_{\bw}$ of $(V,G, \bC_R^*, W, \zeta)$ is 
$$
\cH_{\bw} = \bigoplus_{v\in \mathrm{Box}(\bSi_\zeta)} \bH^*\left(\sX_{\zeta,v}, (\Omega^\bullet_{\sX_{\zeta,v}}, d\bw_{\zeta,v})\right)[2(\age(v)-\hq)]
$$
as a graded vector space over $\bC$, where
$$
\bH^*\left(\sX_{\zeta,v}, (\Omega^\bullet_{\sX_{\zeta,v}}, d\bw_{\zeta,v})\right) \simeq
H^*(\sX_{\zeta,v}, \bw_{\zeta,v}^\infty;\bC).
$$
Let $( \ , \  )_{\bw}$ denote the non-degenerate pairing
$\cH_{\bw}\otimes \cH_{\bw} \to \bC$ in Section \ref{sec:state}
 
 Let $\tGa$ be the non-equivariant limit of $\tGa^{\tT}_z\big|_{z=1}$, i.e., 
 \begin{equation}
 \tGa = \sum_{v\in \mathrm{Box}(\bSi)}  \prod_{j=1}^{n+{\boldsymbol{\kappa}}} 
\Gamma\Big( 1+  \iota_v^*u_j     - \age_v (U_j^{-1}) \Big) \one_v
 \end{equation} 

\begin{definition}[GLSM central charge]
Given $\fB\in D\MF(\sX_\zeta,\bw_\zeta)$, we define its GLSM central charge by 
\begin{equation}
Z_{\bw} (\fB) := \left(   I_{\bw}(y,-1), \tGa \tch_{\bw}([\fB])\right)_{\bw} 
\end{equation}   
where $[\fB]\in K(D\MF(\sX_\eta, \bw_\zeta))$ is the K-theory class of $\fB$, and $\tch_{\bw}([\fB])$ is defined
in \cite{CKS}. 

\end{definition}

\section{The Coulomb Branch} \label{sec:Coulomb} 

\subsection{Disk partition function}

In this section we introduce the disk partition function following~\cite{HR} as an
explicit contour integral associated to a GLSM and a brane (element of $K(\MF([V/G],\bw))$) in
a given phase. Physically this integral is a Coulomb branch representation of a disk partition
function. We show that this integral coincides with the D-brane central charge introduced in the
sections earlier establishing the Higgs-Coulomb correspondence.

\paragraph{\bf Integral density}

We work in the notation of Section~\ref{sec:abelian}. Recall that we have the exact sequence of tori
\begin{equation}
  \label{eq:1}
  1 \to G \to \tT \to T \to 1,
\end{equation}
where $\tilde{T} \simeq (\bC^{*})^{n+{\boldsymbol{\kappa}}} \subset V \simeq \bC^{n+{\boldsymbol{\kappa}}}$ and the action of $G \simeq (\bC^{*})^{{\boldsymbol{\kappa}}}$ is given by the charge vectors
$\D_{i} \in \bL^{\vee}, \; 1 \le i \le n+{\boldsymbol{\kappa}}$. Indeed, it suffices to assume that the charge vectors 
span $\bL^{\vee}_{\bQ}$ over $\bQ$ which is the same as
saying that the image of $\rho_V: G\to \tT$ is ${\boldsymbol{\kappa}}$-dimensional, or equivalently,
the kernel of $\rho_V$ is finite. 

Let $\fg$ be the Lie algebra of $G$ and let $\fg^\vee$ be the dual of $\fg$. Then
$\fg=\bL_\bC$ and $\fg^\vee=\bL^\vee_\bC$. . 
Let $\theta \in \fg^\vee$ be a complexified stability parameter (complexified K\"{a}hler variable) and 
$\sigma =\sum_{a=1}^{{\boldsymbol{\kappa}}}\sigma_a \xi_a \in\fg$. 
Then $H^*_G(V)$ can be identified with the ring of algebraic functions on $\fg$: 
$H^*_G(V)\simeq H^*_G(\bullet) = \bC[\sigma_1,\ldots,\sigma_{\boldsymbol{\kappa}}]$  where $\sigma_1,\ldots, \sigma_{\boldsymbol{\kappa}} \in H^2_G(V)$ are equivariant
variables of the torus $G$. We have
$$
\fg=\Spec\,\bC[\sigma_1,\ldots, \sigma_{\boldsymbol{\kappa}}],\quad
G =\Spec\,\bC[s_1^{\pm 1},\ldots, s_{\boldsymbol{\kappa}}^{\pm 1} ], \quad
\text{where  }s_a = e^{2\pi\sqrt{-1}\sigma_a}.
$$
Let $H^*_{G,an}(V)$ denote the ring of meromorphic functions on $\fg$. 
We define $\Gamma \in H^{*}_{G, an}(V)$ by the following formula
\begin{equation}
  \Gamma  : = \prod_{i=1}^{n+{\boldsymbol{\kappa}}}\Gamma(\langle \D_{i}, \sigma \rangle + \alpha_{i})
  =\prod_{i=1}^{n+{\boldsymbol{\kappa}}} \Gamma(\sum_{a=1}^{\boldsymbol{\kappa}} Q_i^a \sigma_a +\alpha_i)
\end{equation}
where $Q_i^a =\langle \D_i, \xi_a\rangle\in \bZ$ is defined as in Section \ref{sec:abelian}. 
For fixed $\alpha_i\in \bC$,  $\Gamma$ is a meromorphic function in $\sigma_{1}, \ldots, \sigma_{{\boldsymbol{\kappa}}}$ that has poles along 
$\langle \D_{i}, \sigma \rangle + \alpha_{i} \in \ZZ_{\le 0}$. Each pole divisor is a hyperplane in $\fg$. Whenever
$\{ \D_{i} \}_{i \in I}$ form a basis of $\fg^\vee$ the corresponding polar hyperplanes form simple normal
crossing divisors. As we show below, these simple normal crossing divisors correspond to effective classes of curves
for $[V \sslash_{\zeta}G]$ for the particular choice of the stability parameter $\zeta$.

The variable $\alpha = (\alpha_{1}, \ldots, \alpha_{n+{\boldsymbol{\kappa}}})$ are related to the equivariant variable
$\lambda= (\lambda_1,\ldots, \lambda_{n+{\boldsymbol{\kappa}}})$ 
of the big torus $\tilde{T} \simeq (\bC^{*})^{n+{\boldsymbol{\kappa}}}$ by $\alpha_i = \lambda_i + q_i/2$. So far there is no superpotential in the story; moreover,
only monomials in $x_1,\ldots,x_{n+\boldsymbol{\kappa} }$ are  equivariant with respect to the  $\tilde{T}$-action on $V=\Spec\bC[x_1,\ldots, x_{n+\boldsymbol{\kappa}}]$. 
In order to account for the superpotential we first need to take the limit $\alpha_{i} \to q_{i}/2$
and then pair the result with the elements of the GLSM branes $K(\MF([V/G], \bw))$.
We note that the other limit $\alpha_{i} \to 0$ corresponds to the non-equivariant theory without superpotential (quasimaps to GIT quotients
$[V \sslash_{\zeta}G]$).

Let $\fB \in K(\MF([V/G], \bw))$. We can compose the forgetful morphism $K(\MF([V/G], \bw)) \to
K([V/G]) \simeq K_{G}(V)$ with the equivariant Chern character map 
$$
\ch \; : \; K_{G}(V) \to H^{*}_{G,an}(V), \quad \sum_{\St\in \bL^\vee} c_{\St}\bfL_{\St}
\mapsto \sum_{\St\in \bL^\vee} c_{\St}  e^{2\pi\sqrt{-1}\langle \St,\sigma\rangle}
$$
where $c_{\St}\in \bZ$ and the sum is finite. We denote the resulting map as just $\ch$.
Note that $e^{2\pi\sqrt{-1}\langle \St,\sigma\rangle} = \prod_{a=1}^{\boldsymbol{\kappa}} s_a^{\langle \St,\xi_a\rangle}$, 
where $\langle \St, \xi_a\rangle\in \bZ$, so the image of $\ch$ is the subring $\bZ[s_1^{\pm},\ldots, s_{\boldsymbol{\kappa}}^{\pm} ] \subset H^*_{G,an}(V)$.

\begin{definition}
  Integral density for the disk partition function is the following class in $H_{G,an}(V)$:
  \begin{equation} \label{eq:diskPartitionDensity}
    F_{\fB}(\sigma) := \Gamma \cdot \ch(\fB).
  \end{equation}
\end{definition}

\subsection{Component of the disk partition function}

\paragraph{\bf The integral definition}


Disk partition function is additive in $\fB$, so we first define it on the characters of $G$. 
The definition will depend on the complexified stability parameter $\theta$ which is also a (complexified) character of $G$.

Let us pick an ordered $\bZ$-basis  $\{\xi_a \}_{a=1}^{{\boldsymbol{\kappa}}}$ of $\bL$, and 
let $\{\xi_a^*\}_{a=1}^{{\boldsymbol{\kappa}}}$ be the dual $\bZ$-basis of $\bL^\vee$, as in Section \ref{sec:abelian}. 
This defines a volume form $\dd \sigma = \Lambda_{a=1}^{{\boldsymbol{\kappa}}}\xi^*_a$ and an orientation on $\fg_{\bR}=\bL_\bR$. Let further $\delta \in \fg_{\bR}=\bL_\bR$ and $\cC_{\dd \sigma}(\delta)$ be a cycle
$\delta + \sqrt{-1} \fg_{\bR}$ oriented by the form $\dd \sigma$.

\begin{definition}
  Disk partition function  of  the character $\St$ 
  is an analytic function of  the variable $\theta = \zeta + 2\pi\sqrt{-1} B \in \fg^\vee$ given by the formula:
  \begin{equation} \label{eq:diskPartitionComponent}
    Z_{D^{2}}(\bfL_\St) := \frac{1}{(-2\pi \sqrt{-1})^{{\boldsymbol{\kappa}}}}\int_{\cC_{\dd \sigma}(\delta)} \dd \sigma \; \Gamma \cdot
    e^{\langle \theta + 2\pi \sqrt{-1} \St, \sigma \rangle},
  \end{equation}
 where $\delta \in \fg_{\bR}$ satisfies the condition~\footnote{The condition specifies a
    chamber in the hyperplane arrangement of pole hyperplanes of the integrand. The fact that
    such a chamber is nonempty is not immediate. For the mirror quintic we can choose $\delta = (\sum_{a=1}^{100}s_{1a})c\xi_{101}-c\sum_{a=1}^{100} \xi_{a}$, where c is a small positive number, $0<500c < 1$.} $\forall i \in [1..n+{\boldsymbol{\kappa}}] \; : \;
  \langle \D_{i}, \delta \rangle + \alpha_{i} > 0$. 
  \end{definition}

\begin{remark}
  The formula~\eqref{eq:diskPartitionComponent} is a multidimensional inverse Mellin transform
  (see e.g.~\cite{TsikhBook}) of
  $\Gamma e^{2\pi\sqrt{-1} \langle \St, \sigma\rangle } $, where the $G$-character $\St\in \bL^\vee$ is identified with 
the line bundle $\bfL_{\St} \in K_{G}(V)$ through the natural isomorphism.
\end{remark}

Due to Proposition~\ref{lem:convImaginary}
\begin{remark}
  The right hand side of~\eqref{eq:diskPartitionComponent} converges in the domain
  \begin{equation}
    U_{\St} := \Big\{ B \in \fg_{\bR} \; | \; |\langle B+\St, \nu \rangle| < \sum_{i} |\langle \D_{i}, \nu \rangle|/4 \text{ for all } \nu \in \fg_{\bR} \backslash
    \{0\} \Big\}.
  \end{equation}
  In particular, this holds if $B = -\St$, so that $U_\St$ is non-empty for any character $\St$.
  This condition is quite restrictive, in particular none of the branes in $K(\MF([V/G],\bw))$ in the the mirror quintic example satisfies this condition.
\end{remark}

\paragraph{\bf Expansion in phases}

Now we turn to the description of the disk partition function component in the phases of the GLSM.

Let $C$ be a maximal cone of the secondary fan.
The set of minimal anticones is defined as:
\begin{equation}
  \cA^{\min}_C:= \{I \subset [1 ..  n+\boldsymbol{\kappa} ] \; | \; C \subset \angle_{I}, \; |I| \text{ is minimal}\}
\end{equation}
where $\angle_I$ is defined as in Section \ref{sec:anticones}. The latter condition is equivalent to $|I| = {\boldsymbol{\kappa}}$. 
To formulate our version of Higgs-Coulomb Correspondence (Theorem \ref{the:expansion}), we recall some notations
from Definition \ref{D-dual}. For a fixed minimal anticone $I \in \cA_C^{\min}$, the $G$-characters $\{\D_i : i\in I\}$ form a $\bQ$-basis 
of $\bL^\vee_\bQ$. We define $\{\D_i^{*,I}: i\in I \}$ to be the dual $\bQ$-basis of $\bL_\bQ$. 
\begin{definition}[chamber hemisphere partition function] \label{chamber-hemisphere} 
	Let $C$ be a maximal cone of the secondary fan. Define the chamber hemisphere
	partition function 
	\begin{equation} \label{eq:diskPartitionComponent2}
		Z_{D^{2}}(\bfL_{\St})_{C} := (-1)^{{\boldsymbol{\kappa}}}\sum_{I \in \cA^{min}_C} \left|\frac{\Lambda_{a=1}^{{\boldsymbol{\kappa}}} \xi^*_{a}}{\Lambda_{a=1}^{{\boldsymbol{\kappa}}} \D_{i_{a}}} \right| \sum_{m \in (\bZ_{\ge 0})^{{\boldsymbol{\kappa}}}}
		\prod_{i' \in I'} \Gamma(\langle \D_{i'}, \sigma_{m} \rangle + \alpha_{i'})
		\prod_{i \in I} \frac{(-1)^{m_{i}}}{m_{i}!} \exp(\langle \theta + 2\pi \sqrt{-1}\St,
		\sigma_{m} \rangle),
	\end{equation}
  where $m = (m_{i_{1}}, \ldots, m_{i_{{\boldsymbol{\kappa}}}})$, $I=\{ i_1,\ldots, i_{{\boldsymbol{\kappa}}}\}$, and
  $\sigma_{m} = -\sum_{i \in I} (m_{i} + \alpha_{i} )\D_{i}^{*,I}$.
\end{definition}

\begin{Theorem}[Higgs-Coulomb correspondence] \label{the:expansion}
  Let $U_{C} := \bigcap_{I \in \cA^{\min}_C} U_{I} \subset C$, where $U_{I}$ is defined in Proposition~\ref{prop:apConv}.
  $U_{C}$ is open and nonempty and if $\zeta \in U_{C}$ and $B \in U_{\St}$ then we have the equality
  \begin{equation} \label{eq:diskPartitionComponent2}
    Z_{D^{2}}(\bfL_\St) = Z_{D^2}(\bfL_\St)_{C}
  \end{equation}
\end{Theorem}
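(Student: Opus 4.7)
The plan is to evaluate the contour integral~\eqref{eq:diskPartitionComponent} as a sum of iterated Mellin--Barnes residues, with the combinatorics of residues controlled by the minimal anticones $I\in \cA^{\min}_C$. First I would check that $U_C$ is nonempty and open; this is essentially packaged into Proposition~\ref{prop:apConv}. The conditions $B \in U_\St$ and $\zeta \in U_C \subset C$, combined with the Stirling asymptotics of the Gamma factors along vertical lines, ensure absolute convergence both of the integral over $\cC_{d\sigma}(\delta)$ and of the series on the right hand side of~\eqref{eq:diskPartitionComponent2}, so that~\eqref{eq:diskPartitionComponent2} has meaning and residue manipulations are legal.

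The geometric heart of the argument is a contour deformation: since $\re(\theta) = \zeta$ lies in the cone $C$, the linear form $\langle \theta, \sigma\rangle$ has negative real part on an open half-space $\cH_C \subset \fg_\bR$ determined by $C$, so the integrand of~\eqref{eq:diskPartitionComponent} decays super-exponentially as $\sigma$ tends to infinity inside $\cH_C$. Using the multidimensional Cauchy theorem (cf.~\cite{TsikhBook}), one then translates $\delta + \sqrt{-1}\fg_\bR$ off to infinity in the direction of $\cH_C$, picking up iterated residues at all zero-dimensional intersections of the pole hyperplanes $\{\langle \D_i, \sigma\rangle + \alpha_i \in \bZ_{\leq 0}\}$ that lie in $\cH_C$.

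Each such zero-dimensional intersection is indexed by a pair $(I,m)$ with $I \subset \{1,\ldots,n+{\boldsymbol{\kappa}}\}$ of cardinality ${\boldsymbol{\kappa}}$ such that $\{\D_i\}_{i\in I}$ is a $\bQ$-basis of $\bL^\vee_\bQ$ and $m\in (\bZ_{\geq 0})^I$, located at $\sigma_m = -\sum_{i\in I}(m_i+\alpha_i)\D_i^{*,I}$. The selection rule coming from the direction of contour deformation retains exactly those $I$ for which the ray $-\sum_{i\in I} \bR_{>0}\D_i^{*,I}$ lies in $\cH_C$, equivalently $C \subset \angle_I$, i.e.\ $I \in \cA^{\min}_C$. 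The iterated residue at $\sigma_m$ is then computed using (i) the Gamma identity $\res_{x=-m}\Gamma(x) = (-1)^m/m!$ for the factors with $i\in I$, producing the $(-1)^{m_i}/m_i!$'s, (ii) the change-of-variables Jacobian $\bigl|\wedge_a \xi_a^* / \wedge_a \D_{i_a}\bigr|$ expressing $d\sigma$ in the coordinates $\{\langle\D_i,\sigma\rangle\}_{i\in I}$, and (iii) substitution of $\sigma = \sigma_m$ into the regular factors $\Gamma(\langle\D_{i'},\sigma\rangle + \alpha_{i'})$ for $i'\in I'$ and into the exponential $\exp(\langle\theta+2\pi\sqrt{-1}\St,\sigma\rangle)$. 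Collecting these contributions together with the sign from the prefactor $(-2\pi\sqrt{-1})^{-{\boldsymbol{\kappa}}}$ and the residue orientation reproduces the right hand side of~\eqref{eq:diskPartitionComponent2}.

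The main obstacle will be the rigorous justification of this contour deformation: in several complex variables, moving a contour across a normal crossing arrangement of polar hyperplanes requires an iterated application of the one-dimensional residue theorem with the tails at infinity controlled uniformly. I would carry this out by induction on ${\boldsymbol{\kappa}}$, using the membership $\zeta \in U_I$ from Proposition~\ref{prop:apConv} at each stage to guarantee that the auxiliary Mellin--Barnes integrals over ${\boldsymbol{\kappa}}-1$ variables continue to converge and that residues accumulate exactly at the prescribed lattice points, while the constraint $B\in U_\St$ secures convergence of the resulting series over $m\in(\bZ_{\geq 0})^I$. Standard Stirling estimates on vertical strips, combined with the fact that the chamber condition $\zeta \in U_C$ forces exponential decay of the shifted integrand outside $\cH_C$, make the tail contributions vanish in the limit.
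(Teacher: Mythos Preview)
Your approach is essentially that of the paper: deform the Mellin--Barnes contour $\delta+\sqrt{-1}\fg_{\bR}$ in directions on which $\langle\zeta,\cdot\rangle$ is negative, iterate the one-variable residue theorem until only point residues at $\sigma_m$ remain, and identify each residue with the $(I,m)$-summand via $\res_{x=-m}\Gamma(x)=(-1)^m/m!$ and the Jacobian $\bigl|\wedge_a\xi^*_a/\wedge_a\D_{i_a}\bigr|$. The convergence bookkeeping you describe matches the paper's Proposition~\ref{lem:convImaginary} and Proposition~\ref{prop:apConv}, and your final residue computation is the paper's Lemma~\ref{lem:residue}.

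The one place where your sketch is too optimistic is the selection rule. You assert that the iterated deformation retains exactly the $I\in\cA^{\min}_C$ because the cone $-\sum_{i\in I}\bR_{>0}\D_i^{*,I}$ lies in a half-space $\cH_C$. The characterization of minimal anticones by $\langle\zeta,\D_i^{*,I}\rangle>0$ for all $i\in I$ is correct, but an iterated deformation does not see this all at once: after peeling off $k$ residues you sit on a stratum $\pi^{I_k}_{m_{I_k}}$, and the next deformation direction $f\in(\fg_{I_k})_\bR$ can only be chosen with $\langle\zeta,f\rangle<0$, not aligned with any particular dual basis vector. A priori the ray $p+\bR_{\geq 0}f$ then meets hyperplanes $\pi^i_{m_i}$ both for $i$ with $\langle\zeta,\D_i^{*,I}\rangle>0$ and for $i$ with $\langle\zeta,\D_i^{*,I}\rangle<0$, so completions of $I_k$ to non-anticone $I$'s enter the recursion. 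The paper handles this in Lemma~\ref{lem:residues} by tracking, at each step, the shifted cone $p-\angle^{*,\zeta}_{I_k\subset I}$ already swept out, and showing in case~(2) of its proof that any lattice point $q$ lying outside this cone is reached along the ray $q-\bR_{\geq 0}f$ through an even number of boundary faces with cancelling orientation signs, while points inside are hit exactly once with the correct sign. This cancellation, together with the observation that for the initial $\delta$ (chosen so that all $\langle\D_i,\delta\rangle+\alpha_i>0$) the set $(\delta-\angle^{*,\zeta}_{\emptyset\subset I})\cap\bK^{\eff}_I(\alpha)$ is empty unless every $\langle\zeta,\D_i^{*,I}\rangle>0$, is what makes the selection rule rigorous. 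Your induction on ${\boldsymbol{\kappa}}$ will need an analogous sign-and-cone bookkeeping device; without it the claim that only minimal anticones survive is unjustified.
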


\begin{proof}

  Under the assumptions in the theorem all the integrals and series in~\eqref{eq:diskPartitionComponent2} converge due to 
  Proposition~\ref{lem:convImaginary} and Proposition~\ref{prop:apConv}.

  Now we turn to the proof of the equality of the right hand side and left hand side.
  Our strategy is to deform the integration contour in~\eqref{eq:diskPartitionComponent} while
  separating the contributions from different  torus fixed points.

  First we need to introduce some notations to work with contours. Let $\pi^{i}_{m_{i}}$ denote a hyperplane
  $\langle \D_{i}, \sigma \rangle + \alpha_{i} = -m_{i}$ and
  $$ 
  \pi^{\{i_{1}, \ldots, i_{l}\}}_{\{m_{i_{1}}, \ldots, m_{i_{l}}\}} := \bigcap_{k=1}^{l} \left\{ \langle \D_{i_{k}}, \sigma \rangle + \alpha_{i_{k}}  = -m_{i_{k}} \right\}. 
  $$

  Below we will use the following notations: $I_{k} = \{i_{1}, \ldots, i_{k}\} \subset \{1,\ldots, n+{\boldsymbol{\kappa}}\}$ such that 
  $\D_{i_{1}}, \ldots, \D_{i_{k}}$
  are linearly independent if not stated otherwise. $I^{\mathrm{or}}_{k}$ stands for an ordering $(i_{1}, \ldots, i_{k})$ of $I_{k}$. In addition
  $m_{I_{k}} = (m_{i_{1}}, \ldots, m_{i_{k}}) \in (\bZ_{\ge 0})^{k}$.
  
  Given such $I_k$ consider the following exact sequence:
  $$
  1 \to G_{I_{k}} \to G \stackrel{\D_{i_{1}}, \ldots, \D_{i_{k}}}{\lra} H_{I_{k}} \simeq (\bC^{*})^{k} \to  1,
  $$
  where $H_{I_{k}} := G/G_{I_{k}}$. We also denote the corresponding Lie algebras by $\fg_{I_{k}}$ (of dimension ${\boldsymbol{\kappa}}-k$) and $\fh_{I_{k}}$
  (of dimension $k$). Canonically
  $\fh_{I_{k}}^{\vee} = \langle \D_{i_{1}}, \ldots, \D_{i_{k}}\rangle$.
  Note that $\pi^{I_{k}}_{m_{I_{k}}}$ is an affine space parallel to $\fg_{I_{k}}$ so that $\pi_{\fg \to \fh_{I_{k}}} (\pi^{i}_{m_{i}})$ is a hyperplane
  in $\fh_{I_{k}}$ whenever $i \in I_{k}$. We will denote these hyperplanes by the same symbol. If $k = {\boldsymbol{\kappa}}$, then $\pi^{I_{k}}_{m_{I_{k}}}$ is
  just a point and $G_{I_k} = G_{\sigma_{I'_k}}$ is finite.  \\

  Given a point $p \in \pi^{I_{k}}_{m_{I_{k}}}$ that is not contained in the other hyperplanes consider an analytic
  neighbourhood $U$ that does not intersect other hyperplanes and let $[p]$ and $U_{0} = \pi_{\fg \to \fh_{I_{k}}} (U)$ be their projections to
  the quotient $\fh_{I_{k}}$.
  
  The linear independence condition implies that $\bigcup_{l=1}^{k} \pi^{i_{l}}_{m_{i_{l}}}$ is a simple normal crossing divisor for any $m_{I_{k}}$
  with the center at $[p]$, so $U_{0} \backslash \bigcup_{l=1}^{k} \pi^{i_{l}}_{m_{i_{l}}}$ is homotopically equivalent to $(S^{1})^{k}$.
  Let $\cC^{I^{\mathrm{or}}_{k}}_{0}([p])$ denote a generator in $H_{k}(U_{0} \backslash \bigcup_{l=1}^{k} \pi^{i_{l}}_{m_{i_{l}}})$ oriented by the
  form $(2\pi \sqrt{-1})^{-k} \dd \D_{i_{1}}/\D_{i_{1}} \wedge \ldots \wedge \dd \D_{i_{k}}/\D_{i_{k}} $, that is in the basis of (complex) linear functions
  on $\fh_{I_{k}}$ given by $z_{l} = \D_{i_{l}}$ we have
  \begin{equation}
    \frac{1}{(2\pi\sqrt{-1})^{k}}\int_{\cC^{I^{\mathrm{or}}_{k}}_{0}([p])} \Lambda_{l=1}^{k} \frac{\dd z_{l}}{z_{l}}  = 1.
  \end{equation}
  Let also $\cC^{I^{\mathrm{or}}_{k}}_{0}(p)$ denote a generator of $$H_{k}(U \backslash \bigcup_{l=1}^{k} \pi^{i_{l}}_{m_{i_{l}}})$$ projecting to
  $\cC^{I^{\mathrm{or}}_{k}}_{0}([p])$. Let $\Omega \in \Lambda^{{\boldsymbol{\kappa}}-k} (\fg_{I_{k}})^\vee_{\bR}$ be a volume form. Such a form defines an orientation
  of $\sqrt{-1} (\fg_{I_{k}})_{\bR}$. This is canonically the Lie algebra of $(G_{I_k})_\text{comp}$, the maximal compact subgroup
  of $G_{I_k}$. \\

  We define a cycle $$\cC^{I^{\mathrm{or}}_{k}}_{\Omega}(p) \in H_{{\boldsymbol{\kappa}}}(\fg \backslash \mathrm{Polar}, |\Im(\sigma)| \gg 0)$$ as a class of
  $\cC^{I^{\mathrm{or}}_{k}}_{0}(p) + \sqrt{-1} (\fg_{I_{k}})_{\bR}$ oriented by the form 
  $(2\pi\sqrt{-1})^{-k}\Lambda_{l=1}^{k} \dd \D_{i_{l}}/\D_{i_{l}}  \wedge \Omega $.
  In the definition above $\mathrm{Polar}$ stands for the polar divisor of the integrand, that is
  $\bigcup_{i=1}^{n+{\boldsymbol{\kappa}}}\bigcup_{m_{i} \ge 0} \pi^{i}_{m_{i}}$. If $k = {\boldsymbol{\kappa}}$, then $\Omega \in \bR$. If $\Omega = 1$ we usually omit it from
  the notation.
  \\
  
  \begin{remark}
    We will be considering integrals of the form $\int_{\cC_{\Omega}^{I_{k}^{\mathrm{or}}}(p)} \dd \sigma \, \Gamma  \cdot e^{\langle \theta, \sigma \rangle}$.
    Such integrals depend only on the homology class of the cycle in the group
    $$H_{{\boldsymbol{\kappa}}}(\fg \backslash \bigcup_{i =1}^{n+{\boldsymbol{\kappa}}} \bigcup_{m \ge 0} \pi^{i}_{m_{i}}, |\Im(\sigma)| \gg 0).$$
    In particular, two cycles are homologous if they are related by a smooth homotopy that leaves the set $|\Im(\sigma)| \gg 0$ invariant and
    does not cross polar hyperplanes.
  \end{remark}
  \begin{lemma} \label{lem:convImaginary}
    Let $\cC^{I^{or}_{k}}_{\Omega}(p)$ be as above, $k<{\boldsymbol{\kappa}}$ and
    \begin{equation} \label{eq:convCondition1}
      |\langle B, \nu \rangle| < \frac14 \sum_{i=1}^{n+{\boldsymbol{\kappa}}}\langle \D_{i}, \nu \rangle \text{ for all }\nu \in \fg_{I_{k}} \backslash \{0\}.
    \end{equation}
    Then the integral
    \begin{equation}
      \int_{\cC^{I^{\mathrm{or}}_{k}}_{\Omega}(p)} \dd \sigma \, \Gamma \cdot e^{\langle \theta, \sigma \rangle}
    \end{equation}
    is absolutely convergent. In particular it is always convergent if $B \in \fg_{I_{k}}^{\perp}$. We show that $\Gamma \cdot e^{\langle \theta,
    \sigma \rangle}$ uniformly exponentially decays as $|\sigma| \to \infty$ on the integration cycle.
  \end{lemma}
  \begin{proof}
    We present the proof of this statement in Appendix \ref{appendix-A}. 
  \end{proof}

  \begin{lemma}[One plane crossing]
    \label{lem:contDef}
    Let $I_{k}^{\mathrm{or}} = (I_{k-1}^{\mathrm{or}}, i)$ such that $\fg_{I_{k-1}} \ne \fg_{I_{k-1} \cup \{i\}}$, $\Omega$ be a volume form on $(\fg_{I_{k-1}})_{\bR}$ and $p \in \pi^{I_{k}}_{m_{I_k}}$ be generic (does not intersect other polar hyperplanes).
    Pick $f \in \fg_{I_{k-1}} \backslash \fg_{I_{k}}$.    
    \begin{equation}
      \cC^{I^{\mathrm{or}}_{k-1}}_{\Omega}(p + \varepsilon f) = \cC^{I^{\mathrm{or}}_{k}}_{\iota_{f} \Omega}(p) +
      \cC^{I^{\mathrm{or}}_{k-1}}_{\Omega}(p - \varepsilon f),
    \end{equation}
    where $\iota_{f} \Omega$ is a ${\boldsymbol{\kappa}}-k$ form on $(\fg_{I_{k}})_{\bR}$ and $\varepsilon$ small enough.
    
  \end{lemma}
  \begin{proof}
    Let $D(z,r)$ denote a disk with center at $z \in \bC$ and radius $r$.
    Consider a linear coordinate system $\sigma_{1}, \ldots, \sigma_{{\boldsymbol{\kappa}}}$ on $\fg$ with the center at $p$ such that $\cC^{I^{\mathrm{or}}_{k}}_{\Omega}(r + \varepsilon f)$ can be represented by the cycle
    $$ (\pd D(0, \alpha))^{k-1} \times (\varepsilon + \sqrt{-1}\bR) \times (\sqrt{-1} \bR^{{\boldsymbol{\kappa}}-k}) $$
    for for some $\alpha \in \bR \backslash \{0\}$ such that $f = \pd/\pd \sigma_{k}$. Then the form $\Omega$ is equal to
    $c \, \dd \sigma_k \wedge \cdots \wedge \dd \sigma_{{\boldsymbol{\kappa}}}$, where $c \in \bR \backslash \{0\}$.
    This cycle is homotopic to the union     
    $$ (\pd D(0, \alpha))^{k-1} \times (-\varepsilon + \sqrt{-1}\bR) \times (\sqrt{-1} \bR^{{\boldsymbol{\kappa}}-k}) $$
    oriented by the same form and    
    $$ (\pd D(0, \alpha))^{k} \times (\sqrt{-1} \bR^{{\boldsymbol{\kappa}}-k}) $$
    oriented by the form $c \, \dd \sigma_{k+1} \wedge \ldots \wedge \dd \sigma_{{\boldsymbol{\kappa}}} = \iota_{f} \Omega$.
    The homotopy is nontrivial only in the $k$-th component, where it is shown in Figure~\ref{fig:contour1}.

    \begin{figure}[h!]
    \centering
    \def\svgwidth{6cm}
\begingroup%
  \makeatletter%
  \providecommand\color[2][]{%
    \errmessage{(Inkscape) Color is used for the text in Inkscape, but the package 'color.sty' is not loaded}%
    \renewcommand\color[2][]{}%
  }%
  \providecommand\transparent[1]{%
    \errmessage{(Inkscape) Transparency is used (non-zero) for the text in Inkscape, but the package 'transparent.sty' is not loaded}%
    \renewcommand\transparent[1]{}%
  }%
  \providecommand\rotatebox[2]{#2}%
  \newcommand*\fsize{\dimexpr\f@size pt\relax}%
  \newcommand*\lineheight[1]{\fontsize{\fsize}{#1\fsize}\selectfont}%
  \ifx\svgwidth\undefined%
    \setlength{\unitlength}{238.45066101bp}%
    \ifx\svgscale\undefined%
      \relax%
    \else%
      \setlength{\unitlength}{\unitlength * \real{\svgscale}}%
    \fi%
  \else%
    \setlength{\unitlength}{\svgwidth}%
  \fi%
  \global\let\svgwidth\undefined%
  \global\let\svgscale\undefined%
  \makeatother%
  \begin{picture}(1,0.77662923)%
    \lineheight{1}%
    \setlength\tabcolsep{0pt}%
    \put(0,0){\includegraphics[width=\unitlength,page=1]{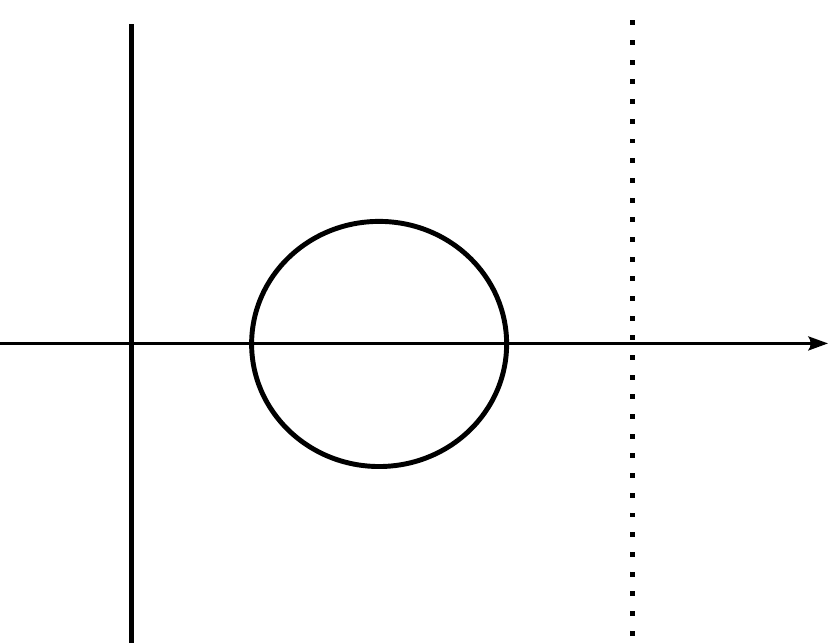}}%
    \put(0.77177622,0.24526862){\makebox(0,0)[lt]{\lineheight{1.25}\smash{\begin{tabular}[t]{l}$\varepsilon$\end{tabular}}}}%
    \put(0.16665635,0.25278187){\makebox(0,0)[lt]{\lineheight{1.25}\smash{\begin{tabular}[t]{l}$-\varepsilon$\end{tabular}}}}%
    \put(0,0){\includegraphics[width=\unitlength,page=2]{Deformation1.pdf}}%
    \put(0.20324017,0.70015897){\makebox(0,0)[lt]{\lineheight{1.25}\smash{\begin{tabular}[t]{l}$\sigma_k$\end{tabular}}}}%
  \end{picture}%
\endgroup%

    \caption{Basic contour deformation}
    \label{fig:contour1}
    \end{figure}

  \end{proof}

  Applying the lemma several times we get the following statement:
  \begin{lemma}
    \label{lem:contDef}
    Consider $I_{k-1}^{\mathrm{or}}$, a volume form $\Omega$ on $(\fg_{I_{k}})_{\bR}$ and $p \in \pi^{I_{k-1}}_{m_{I_{k-1}}}$ generic (does not intersect other polar hyperplanes) and $f \in \fg_{I_{k-1}} \backslash \bigcup_{i, \; \fg_{I_{k-1}} \ne \fg_{I_{k-1} \cup \{i\}}} \fg_{I_{k-1} \cup \{i\}}$.
    \begin{equation}
      \cC^{I^{\mathrm{or}}_{k-1}}_{\Omega}(p) = -\sum_{i \notin I_{k-1}}\sum_{m_{i} \ge 0} \cC^{I^{\mathrm{or}}_{k}}_{\iota_{f} \Omega}(\{p + [0,N]f\} \cap \pi^{i}_{m_{i}}) +
      \cC^{I^{\mathrm{or}}_{k-1}}_{\Omega}(p + N f),
    \end{equation}
    where $\iota_{f} \Omega$ is a ${\boldsymbol{\kappa}}-k$ form on $(\fg_{I_{k}})_{\bR}$ and the cycle is empty if the intersection is. We note that only $i$ such
    that $\fg_{I_{k-1}} \ne \fg_{I_{k-1} \cup \{i\}}$ enter the right hand side.
  \end{lemma}
  
  \begin{remark} \label{rem:contourConvergence}
    Later we consider integrals of the form
     $$
     \int_{\cC} \dd \sigma \, \Gamma \cdot e^{\langle \theta, \sigma \rangle},
     $$
    $\cC \in H_{{\boldsymbol{\kappa}}}(\fg \backslash Polar, |\Im(\sigma)| \gg 0)$. Let $\{\cC_{\alpha} \}_{\alpha \in A} \subset H_{{\boldsymbol{\kappa}}}(\fg \backslash Polar, |\Im(\sigma)| \gg 0)$. If $A$ is an infinite set, then by abuse of notations when we write an expression of the form
    \begin{equation}
      \sum_{\alpha \in A} \cC_{\alpha} = \cC
    \end{equation}
    we actually mean
    \begin{equation}
      \sum_{\alpha \in A} \int_{\cC_{\alpha}} \dd \sigma \, \Gamma \cdot e^{\langle \theta, \sigma \rangle} = \int_{\cC} \dd \sigma \,
      \Gamma \cdot e^{\langle \theta, \sigma \rangle},
    \end{equation}
    when the left hand side converges. We use this notation to simplify already bulky notations.
  \end{remark}
  
  \begin{Corollary} \label{cor:contDef}
    Let $p, \Omega, N$ be as in Lemma~\ref{lem:contDef} then there exist a constant $const$ that does not depend on $p$ and $f$ such that if $\langle \zeta, f \rangle < const$ then
    \begin{equation}
      \cC^{I^{\mathrm{or}}_{k-1}}_{\Omega}(p) = -\sum_{i \notin I_{k-1}}\sum_{m_{i} \ge 0} \cC^{I^{\mathrm{or}}_{k}}_{\iota_{f} \Omega}(\{p + \bR_{\ge 0}f\} \cap \pi^{i}_{m_{i}}).
    \end{equation}
  \end{Corollary}
  \begin{proof}
    Lemma~\ref{lem:contDef} implies
    \begin{equation}
      \int_{\cC^{I^{\mathrm{or}}_{k-1}}_{\Omega}(p)} \dd \sigma \, \Gamma \cdot e^{\langle
        \theta, \sigma \rangle} = -\sum_{i \notin I_{k-1}}\sum_{m_{i \ge 0}} \int_{\cC^{I^{\mathrm{or}}_{k}}_{\iota_{f}\Omega}(\{p+[0,N]f\} \cap \pi^{i}_{m_{i}})} \dd \sigma \, \Gamma \cdot e^{\langle
        \theta, \sigma \rangle} + \int_{\cC^{I^{\mathrm{or}}_{k-1}}_{\Omega}(p+Nf)} \dd \sigma \, \Gamma \cdot e^{\langle
        \theta, \sigma \rangle}.
    \end{equation}
    Using Proposition~\ref{prop:integralBound} with $p \to p+Nf$ we can estimate the last term on the right hand side:
    \begin{equation}
      \int_{\cC^{I^{\mathrm{or}}_{k-1}}_{\Omega}(p+Nf)} \dd \sigma \, \Gamma \cdot e^{\langle
        \theta, \sigma \rangle} \le e^{-const |p+Nf|},
    \end{equation}
    In particular, the integral approaches 0 as $N \to \infty$.
    Now the corollary follows from taking the limit $N \to \infty$.

  \end{proof}

\begin{figure}[h!]
  \centering
  \def\svgwidth{10cm}
\begingroup%
  \makeatletter%
  \providecommand\color[2][]{%
    \errmessage{(Inkscape) Color is used for the text in Inkscape, but the package 'color.sty' is not loaded}%
    \renewcommand\color[2][]{}%
  }%
  \providecommand\transparent[1]{%
    \errmessage{(Inkscape) Transparency is used (non-zero) for the text in Inkscape, but the package 'transparent.sty' is not loaded}%
    \renewcommand\transparent[1]{}%
  }%
  \providecommand\rotatebox[2]{#2}%
  \newcommand*\fsize{\dimexpr\f@size pt\relax}%
  \newcommand*\lineheight[1]{\fontsize{\fsize}{#1\fsize}\selectfont}%
  \ifx\svgwidth\undefined%
    \setlength{\unitlength}{118.94634697bp}%
    \ifx\svgscale\undefined%
      \relax%
    \else%
      \setlength{\unitlength}{\unitlength * \real{\svgscale}}%
    \fi%
  \else%
    \setlength{\unitlength}{\svgwidth}%
  \fi%
  \global\let\svgwidth\undefined%
  \global\let\svgscale\undefined%
  \makeatother%
  \begin{picture}(1,0.83253538)%
    \lineheight{1}%
    \setlength\tabcolsep{0pt}%
    \put(0,0){\includegraphics[width=\unitlength,page=1]{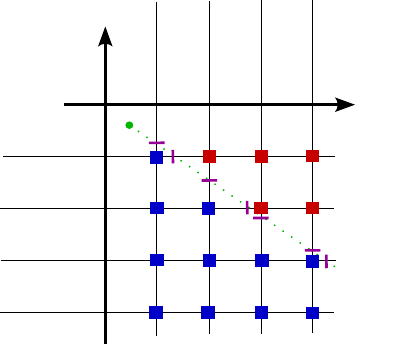}}%
    \put(0.27727123,0.60446351){\makebox(0,0)[lt]{\lineheight{1.25}\smash{\begin{tabular}[t]{l}$\cC_{\dd x\wedge \dd y}(p)$\end{tabular}}}}%
    \put(0.80508067,0.22674377){\makebox(0,0)[lt]{\lineheight{1.25}\smash{\begin{tabular}[t]{l}$\cC_{f^{\perp}}^{(1)}(q)$\end{tabular}}}}%
    \put(0.7909362,0.091069){\makebox(0,0)[lt]{\lineheight{1.25}\smash{\begin{tabular}[t]{l}$\cC^{(2,1)}_{1}((-4,-4))$\end{tabular}}}}%
    \put(0.79091311,0.48766576){\makebox(0,0)[lt]{\lineheight{1.25}\smash{\begin{tabular}[t]{l}$\cC^{(1,2)}_{1}((-4,-1))$\end{tabular}}}}%
    \put(0.11037202,0.73389801){\makebox(0,0)[lt]{\lineheight{1.25}\smash{\begin{tabular}[t]{l}$\fg_{\bR}$\end{tabular}}}}%
  \end{picture}%
\endgroup%

  \caption{Contour deformations in projection to $\fg_{\bR}$.}
  \label{fig:contour2}
\end{figure}
Figure~\ref{fig:contour2} shows two consecutive applications of Corollary~\ref{cor:contDef} in a simple example.
  The plane is a real part $\fg_{\bR}$ of the two-dimensional Lie algebra $\fg \simeq \bC^{2}$. The picture shows only 2 series of polar divisors
  located at $x = -1, -2, -3, \ldots$ and $y = -1, -2, -3, \ldots$. The initial contour $\cC_{\dd x \wedge \dd y}(p)$ is a purely imaginary contour
  depicted by a green dot. Its projection to the real Lie algebra is just a point. Direction of deformation $f$ is a dashed green line.
  After first application of Corollary~\ref{cor:contDef} the contour transforms into two series of copies of $\bR \times S^{1}$ denoted by
  purple intervals. Indeed, projections of these contours to $\fg_{\bR}$ are intervals intersecting the corresponding polar hyperplanes.
  Each of this contour is deformed parallel to the respective polar hyperplane applying Corollary~\ref{cor:contDef} again. The contours deform to a
  number of $T^{2} \simeq (S^{1})^{2}$ winding around each crossing of polar hyperplanes.\\

  Let us introduce some more notations for cones of different dimensions in $\fg$.
  Given $I$ such that $\{\D_{i}\}_{i \in I}$ form a basis of $\fg^\vee$ consider the cone
  \begin{equation} \label{eq:correctedDualCone}
    \angle_{I}^{*,\zeta} = \{ \sum_{i \in I}\mathrm{sign}(\langle \zeta, \D^{*, I}_{i} \rangle) c_{i} \D^{*, I}_{i} \; | \; c_{i} \ge 0 \},
  \end{equation}
  and its subcones:
  \begin{equation} \label{eq:correctedDualSubcones}
    \angle_{J \subset I}^{*,\zeta} = \angle^{*,\zeta}_{I} \cap \fg_{J}.
  \end{equation}
  The signs are chosen so that $\langle \zeta, \sigma \rangle \ge 0$ for any $\sigma \in \angle_{J \subset I}^{*,\zeta}$.
  We also define
  $$
  \mathrm{sign}_{J \subset I}^{\zeta} = (-1)^{\#\{i \in I \backslash J \; | \;  \langle \zeta, \D^{*,I}_{i} \rangle < 0 \}}. 
  $$
  In particular, if $I$ is a minimal anticone (which is the main case of interest) then $\angle^{*,\zeta}_{I} = \angle^{*}_{I}$, 
  where $\angle^*_I \subset \fg_\bR$ is  the dual cone of $\angle_I \subset \fg^\vee_\bR$,  and $\mathrm{sign}_{J \subset I}^{\zeta} = 1$ for all $J \subset I$. 
  We also have $\angle^{*,\zeta}_{\mempty \subset I} = \angle^{*,\zeta}_{I}$ and  $\angle^{*,\zeta}_{I \subset I} = 0$.
  
  Further recall that
  $$
  \bK^{\eff}_I =\Big\{  \sum_{i\in I} m_i \D^{*,I}_{i} \;\Big|\: m_i\in \bZ_{\geq 0} \Big\} 
  $$
  and define
  $$
  \bK_{I}^{\eff}(\alpha) := - \sum_{i\in I}\alpha_i \D^{*,I}_i -\bK^{\eff}_{I} = \Big\{  - \sum_{i\in I} (\alpha_i + m_i) \D^{*,I}_i \;\Big|\: m_i\in \bZ_{\geq 0}\Big\}. 
  $$
  If $\alpha_i = q_i/2$ then $\bK_{I}^{\eff}(\alpha) = - \bK^{\eff}_{I}(\sX_{\zeta}, \bw_{\zeta})$. 
 If $p\in \pi^{I_k}_{m_{I_k}}$  then
 $$
 p -\angle^{*\zeta}_{I_k\subset I} = \Big\{  -\sum_{i\in I_k} (\alpha_i+m_i)\D_i^{*, I} + \sum_{i\in I\setminus I_k} (\langle \D_i, p\rangle - c_i) \D_i^{*, I} 
 \: \Big| \;  c_i\in \bR,  c_i\geq 0  \text{ if } 
 \langle \zeta, \D_i^{*,I} \rangle > 0 \text{ and } c_i\leq 0 \text{ if } \langle \zeta, \D_i^{*,I}\rangle < 0 \Big\}. 
 $$

  The following lemma is the main technical lemma of the proof. It relates integrals over $(S^1)^{k} \times \bR^{{\boldsymbol{\kappa}}-k}$ to
  sums of cycles over $(S^1)^{{\boldsymbol{\kappa}}}$ in some ${\boldsymbol{\kappa}}-k$-dimensional
  cone by recursively applying Corollary~\ref{cor:contDef} and checking that only the contributions
  in the corresponding cone survive.
  \begin{lemma} \label{lem:residues}

    Consider $I_{k}^{\mathrm{or}}$, a volume form $\Omega$ on $(\fg_{I_{k}})_{\bR}$ and $p \in \pi^{I_{k}}_{m_{I_{k}}}$ generic (does not intersect other polar hyperplanes). Then
    
    \begin{equation} \label{eq:recursion1}
      \begin{aligned}
        \cC^{I^{\mathrm{or}}_{k}}_{\Omega}(p) = (-1)^{{\boldsymbol{\kappa}}-k}\sum_{I, I_{k} \subset I} \mathrm{sign}_{I_{k} \subset I}^{\zeta}  \sum_{q \in (p - \angle^{*,\zeta}_{I_{k} \subset I}) \cap \bK^{\eff}_{I}(\alpha)}
        \mathrm{sign}\left( \frac{\Lambda_{l=1}^{k} \D_{i_{l}} \wedge \Omega}{\Lambda_{l=1}^{{\boldsymbol{\kappa}}} \D_{i_{l}}} \right) \cC^{I^{\mathrm{or}}}(q).
      \end{aligned}
    \end{equation}
    We note that we can describe the set  $(p-\angle^{*, \zeta}_{I_{k} \subset I}) \cap \bK_{I}^{\eff}(\alpha)$ explicitly as follows.  
    Let $-n_{i'} := \lfloor \langle \D_{i'}, p \rangle + \alpha_{i'}\rfloor \in \bZ$ for $i' \in I \backslash I_{k}$. Then
    $$ 
    (p-\angle^{*, \zeta}_{I_{k} \subset I}) \cap \bK_{I}^{\eff}(\alpha) = \Big\{ -\sum_{i \in I}(\alpha_{i}+m_{i})\D^{*,I}_{i} 
    \; \Big| \:  i \in I \backslash I_{k} \implies m_i\in \bZ\text{ and }
      \begin{cases}
         n_{i} \le m_{i} & \text{if } \langle \zeta, \D^{*,I}_{i}\rangle > 0, \\
         0 \le m_{i} < n_{i} & \text{if } \langle \zeta, \D^{*,I}_{i} \rangle < 0.
      \end{cases}
    \Big\}.  
    $$
  \end{lemma}

  \begin{proof}
    The right hand side is convergent due to Proposition \ref{prop:apConv} since the series in the right hand side
    is a subseries of the one considered in the proposition.

    We prove the lemma by recursion in $k$. First, let $k = {\boldsymbol{\kappa}}-1$. We apply Corollary~\ref{cor:contDef} where $f \in (\fg_{I_{{\boldsymbol{\kappa}}-1}})_\bR \simeq \bR$
    such that $\langle \zeta, f \rangle < 0$. $\Omega \in (\fg_{I_{{\boldsymbol{\kappa}}-1}})_{\bR}^{\vee} \simeq \fg_\bR^\vee/ \left(\oplus_{i\in I_{{\boldsymbol{\kappa}}-1}}\bR \D_i \right)$, so that
    for each $i \notin I_{{\boldsymbol{\kappa}}-1}$ one can write $\Omega = c_{i} \D_{i}$ (where right hand side is a representative). In particular,
    $\iota_{f} \Omega = c_{i} \langle \D_{i}, f \rangle$, where
    $$
    c_{i} = \frac{\Lambda_{l=1}^{{\boldsymbol{\kappa}}-1}\D_{i_{l}} \wedge \Omega}{\Lambda_{l=1}^{{\boldsymbol{\kappa}}-1}\D_{i_{l}} \wedge \D_{i}}. 
    $$
    Furthermore, $\mathrm{sign}(\langle \D_{i}, f \rangle) = \mathrm{sign}(\langle \zeta, \D^{*, I}_{i} \rangle) = 
    \mathrm{sign}_{I_{{\boldsymbol{\kappa}}-1} \subset I}^{\zeta}$. So we have
    \begin{equation}
      \cC^{I^{\mathrm{or}}_{{\boldsymbol{\kappa}}-1}}_{\Omega}(p) = -\sum_{i \notin I_{{\boldsymbol{\kappa}}-1}}\sum_{m_{i} \ge 0} \cC^{(I^{\mathrm{or}}, i)}_{c_i\langle \D_{i}, f \rangle}(\{p + \bR_{\ge 0}f\} \cap
      \pi^{i}_{m_{i}}).
    \end{equation}
    Thus the theorem in this case follows.

    We remark that the conditions in Corollary~\ref{cor:contDef} are stricter than the convergence condition of the theorem, but the statement
    is about equality of the analytic functions is some domain, so it extends to the maximal domain of the mutual convergence.\\

    Consider the general case $0 \le k < {\boldsymbol{\kappa}}-1$. We choose a generic $f \in (\fg_{I_{k}})_{\bR}\simeq \bR^{\boldsymbol{\kappa}-k}$ such that $\langle \zeta, f \rangle < 0$ and
    apply Corollary~\ref{cor:contDef}

    \begin{equation} \label{eq:residueExpansion1}
      \cC^{I^{\mathrm{or}}_{k}}_{\Omega}(p) = -\sum_{i \notin I_{k}}\sum_{m_{i} \ge 0} \cC^{(I^{\mathrm{or}}_{k}, i)}_{\iota_{f} \Omega}(\{p + [0,N]f\} \cap \pi^{i}_{m_{i}}) +
      \cC^{I^{\mathrm{or}}_{k}}_{\Omega}(p + N f).
    \end{equation}
    The sum over $m_{i} \in \bZ$ restricts to a set
    \begin{equation}
      \begin{cases}
        n_{i} \le m_{i}, \; \langle \D_{i}, f \rangle > 0, \\
        0 \le m_{i} < n_{i}, \; \langle \D_{i}, f \rangle < 0,
      \end{cases}
    \end{equation}
    where again $-n_{i} = \lfloor \langle \D_{i}, p \rangle + \alpha_{i}\rfloor.$
    In particular, if $I$ is a minimal anticone then the second case does not appear.

    Now we can use the recursion step for each term in the double sum on the right hand side.
    \begin{equation}\label{eq:recursion2}
      \begin{aligned}
        \cC^{(I^{\mathrm{or}}_{k}, i)}_{\iota_{f} \Omega}(\tilde p=\{p + [0,N]f\} \cap \pi^{i}_{m_{i}}) = \sum_{I, I_{k} \cup \{i\} \subset I}
        \mathrm{sign}_{I_{k} \cup \{i\} \subset I}^{\zeta}  \sum_{q \in (\tilde{p} - \angle^{*,\zeta}_{I_{k} \cup \{i\} \subset I}) \cap \bK^{\eff}_{I}(\alpha)}
        \mathrm{sign}\left( \frac{\Lambda_{l=1}^{k} \D_{i_{l}}\wedge \D_{i} \wedge \iota_{f}\Omega}{\Lambda_{l=1}^{{\boldsymbol{\kappa}}} \D_{i_{l}}} \right)
        \cC^{I^{\mathrm{or}}}(q)
      \end{aligned}
    \end{equation}

    Let us analyze when $q \in \bK^{\eff}_{I}(\alpha)$ appears in the expansion of the formula~\eqref{eq:residueExpansion1} after applying
    \eqref{eq:recursion2}.
    Such a term appears when there exist $i \in I'_{k}, \; m_{i} \in \bZ$ such that
    \begin{equation} \label{eq:lemResidues1}
      q \in \{p + [0,N] f\} \cap \pi^{i}_{m_{i}} -\angle^{*,\zeta}_{I_{k} \cup \{i\} \subset I}.
    \end{equation}

    But $\bigcup_{i \in I'_{k}} \angle^{*,\zeta}_{I_{k} \cup \{i\} \subset I} = \pd \angle^{*,\zeta}_{I_{k} \subset I}$. So we can rewrite
    equation~\eqref{eq:lemResidues1} as
    $$
    q \in p+cf - \pd \angle^{*,\zeta}_{I_{k} \subset I}
    $$
    for some $c > 0$, or
    \begin{equation}
      q -cf \in p - \pd \angle^{*,\zeta}_{I_{k} \subset I}.
    \end{equation}
    In other words, for each intersection of the ray $q - \bR_{\ge 0} f$ with the boundary of the cone $p - \angle^{*,\zeta}_{I_{k} \subset I}$ we have
    a term with the cycle centered at $q$ in the expansion of~\eqref{eq:residueExpansion1}. Number of such intersections depends on whether
    $q \in p - \angle^{*,\zeta}_{I_{k} \subset I}$ or not.
    \begin{enumerate}
      \item $q \in p - \angle^{*,\zeta}_{I_{k} \subset I}$. Scalar product of $\zeta$ with all the inward normals to 
      $\pd \angle^{*,\zeta}_{I_k}$
            is by definition positive, $f$ is a linear combination of these normals, and $\langle \zeta, f \rangle < 0$. Then at least one
            coefficient of the linear combination is negative, and $q - \bR_{\ge 0} f$ has one intersection point with
        $p-\pd \angle^{*,\zeta}_{I_{k} \subset I}$. The total contribution to the sum is
        \begin{equation}
          \label{eq:onePointContribution}
           \mathrm{sign}_{I_{k} \cup \{i\} \subset I}^{\zeta}\mathrm{sign}\left( \frac{\Lambda_{l=1}^{k} \D_{i_{l}}\wedge \D_{i} \wedge \iota_{f}\Omega}{\Lambda_{l=1}^{{\boldsymbol{\kappa}}} \D_{i_{l}}} \right) \cC^{I^{\mathrm{or}}}(q),
         \end{equation}
        where
        $$ 
        \Omega = c \D_{i} \wedge \Lambda_{l=k+2}^{{\boldsymbol{\kappa}}} \D_{i_{l}} + \cdots 
        $$
        and
        $$
        \iota_{f} \Omega = c \langle \D_{i}, f \rangle \Lambda_{l=k+2}^{{\boldsymbol{\kappa}}} \D_{i_{l}} + \cdots,
        $$
        where dots denote the possible terms that vanish in the formula~\eqref{eq:onePointContribution}. Plugging this expression for the sign, we get
        the equality
        \begin{equation}
          \mathrm{sign}_{I_{k} \cup \{i\} \subset I}^{\zeta}\mathrm{sign}\left( \frac{\Lambda_{l=1}^{k} \D_{i_{l}}\wedge \D_{i} \wedge \iota_{f}\Omega}{\Lambda_{l=1}^{{\boldsymbol{\kappa}}} \D_{i_{l}}} \right) = \mathrm{sign}_{I_{k} \subset I}^{\zeta}\mathrm{sign}\left( \frac{\Lambda_{l=1}^{k} \D_{i_{l}}\wedge \Omega}{\Lambda_{l=1}^{{\boldsymbol{\kappa}}} \D_{i_{l}}} \right).
        \end{equation}
      \item $q \notin p - \angle^{*, \zeta}_{I_{k} \subset I}$. There are either 0 or two intersection points because the scalar product of
            $f$ with at least one inward normal to $\pd \angle^{*, \zeta}_{I_{k} \subset I}$ is negative. The total contribution to the
        formula~\eqref{eq:residueExpansion1} is
        \begin{equation}
          \label{eq:twoPointContribution}
          \left(\mathrm{sign}_{I_{k} \cup \{i\} \subset I}^{\zeta}\mathrm{sign}\left( \frac{\Lambda_{l=1}^{k} \D_{i_{l}}\wedge \D_{i} \wedge
              \iota_{f}\Omega}{\Lambda_{l=1}^{{\boldsymbol{\kappa}}} \D_{i_{l}}} \right) + \mathrm{sign}_{I_{k} \cup \{j\} \subset I}^{\zeta}\mathrm{sign}\left(
            \frac{\Lambda_{l=1}^{k} \D_{i_{l}}\wedge \D_{j} \wedge \iota_{f}\Omega}{\Lambda_{l=1}^{{\boldsymbol{\kappa}}} \D_{i_{l}}} \right)\right)\cC^{I^{\mathrm{or}}}(q).
        \end{equation} 
        Here we can write
        $$ \Omega = c \D_{i} \wedge \D_{j} \wedge \Lambda_{l=k+3}^{{\boldsymbol{\kappa}}} \D_{i_{l}} + \cdots, $$
        where dots denote irrelevant terms again.
        In the first summand we have
        $$\iota_{f}\Omega = c \langle \D_{i}, f \rangle \D_{j} \wedge \Lambda_{l=k+3}^{{\boldsymbol{\kappa}}} \D_{i_{l}} + \cdots,$$
        and in the second one
        $$\iota_{f}\Omega = -c \langle \D_{j}, f \rangle \D_{i} \wedge \Lambda_{l=k+3}^{{\boldsymbol{\kappa}}} \D_{i_{l}} + \cdots.$$
        So the total coefficient is
        \begin{multline} \label{eq:normals1}
            \mathrm{sign}^{\zeta}_{I_{k} \cup \{i\}} \mathrm{sign}(c\langle \D_{j}, f \rangle) + \mathrm{sign}^{\zeta}_{I_{k} \cup \{j\}}
            \mathrm{sign}(c\langle \D_{i}, f \rangle) = \\ = \mathrm{sign}^{\zeta}_{I_{k}} \left(\mathrm{sign}(c\langle \zeta, \D^{*I}_{j} \rangle
            \langle \D_{j}, f \rangle) +
            \mathrm{sign}(c\langle \zeta, \D^{*, I}_{i} \rangle\langle \D_{i}, f \rangle) \right).
        \end{multline}
        In the last formula $\langle \zeta, \D^{*, I}_{i} \rangle \D_{i}$ and $\langle \zeta, \D^{*, I}_{j} \rangle \D_{j}$ are inward normals of the cone
        $\angle^{*,\zeta}_{I_{k} \subset I}$. However, we know that the ray $q - \bR_{\ge 0} f$ intersects the cone $p - \angle^{*,\zeta}_{I_{k} \subset I}$
        with inward pointing first intersection and outward pointing second one, so the signs in the second line of the formula~\eqref{eq:normals1}
        are different and the total contribution vanishes.
    \end{enumerate}

    Collecting contributrions from all the points $q \in \bK^{\eff}_{I}(\alpha)$ for all $I$ using the statement above we obtain the statement of the
    lemma.
The picture is shown in our toy example (we suppose that $\langle \zeta, \D^{*\{1,2\}}_{1} \rangle, \; \langle \zeta, \D^{*\{1,2\}}_{2} \rangle > 0$)
    in Figure~\ref{fig:contour2}.

  \end{proof}

    Using Lemma~\ref{lem:residues} in the case when $\cC^{I^{\mathrm{or}}}_{\Omega}(p) = \cC_{\dd \sigma}(\delta)$ (where we remind that our choice is
  $\dd \sigma = \Lambda_{a=1}^{{\boldsymbol{\kappa}}} \xi^*_{a}$) we write
  \begin{equation} \label{eq:residueExpansion1}
    \int_{\cC_{\dd \sigma}(\delta)} \dd \sigma \, \Gamma \cdot e^{\langle \theta, \sigma \rangle} = \\
    = (-1)^{{\boldsymbol{\kappa}}}\sum_{I \in \cA^{\min}_{C}} \sum_{q \in \bK^{\eff}_{I}(\alpha)} \mathrm{sign}\left(\frac{\Lambda_{a=1}^{{\boldsymbol{\kappa}}} \xi^*_{a}}{\Lambda_{a=1}^{{\boldsymbol{\kappa}}}
      \D_{i_{a}}} \right)\int_{\cC^{I^{\mathrm{or}}}(q)} \dd \sigma \, \Gamma \cdot e^{\langle \theta, \sigma \rangle}.
  \end{equation}
  
  Let us compute each term in the right hand side.
  \begin{lemma} \label{lem:residue}
    \begin{equation}
      \int_{\cC^{I^{\mathrm{or}}}(q)} \dd \sigma \, \Gamma \cdot e^{\langle \theta, \sigma \rangle} =
      (2\pi \sqrt{-1})^{{\boldsymbol{\kappa}}}\frac{\Lambda_{a=1}^{{\boldsymbol{\kappa}}} \xi^*_a}{\Lambda_{a=1}^{{\boldsymbol{\kappa}}} \D_{i_{a}}}
    \prod_{i' \in I'} \Gamma(\langle \D_{i'}, \sigma_{m} \rangle + \alpha_{i'}) \prod_{i \in I} \frac{(-1)^{m_{i}}}{m_{i}!} e^{\langle \theta, \sigma_{m} \rangle}.
    \end{equation}
  \end{lemma}

  \begin{proof}
    Each integral is a ${\boldsymbol{\kappa}}$-dimensional torus around a simple normal crossing divisor and
    $q = -\sum_{i \in I} (\alpha_{i} + m_{i})\D^{*, I}_{i}$. Let $\{\sigma^{I}_{i}\}_{i \in I}$ denote a set of coordinates on $\fg$ corresponding to
    linear functions $\{\D_{i}\}_{i \in I}$ on $\fg$. Then we have
    \begin{equation}
      \int_{\cC^{I^{\mathrm{or}}}(q)} \dd \sigma \, \Gamma \cdot e^{\langle \theta, \sigma \rangle} =
      (2\pi \sqrt{-1})^{{\boldsymbol{\kappa}}} \res_{\cC^{I^{\mathrm{or}}}(q)}\dd \sigma \, \Gamma \cdot e^{\langle \theta, \sigma
        \rangle}.
    \end{equation}
    Next we use
    \begin{equation}
      \dd \sigma = \frac{\Lambda_{a=1}^{{\boldsymbol{\kappa}}} \xi_a^*}{\Lambda_{a=1}^{{\boldsymbol{\kappa}}} \D_{i_{a}}} \dd \sigma^{I}
    \end{equation}
    so that
    \begin{multline}
      \res_{\cC^{I^{\mathrm{or}}}(q)} \dd \sigma \, \Gamma \cdot e^{\langle \theta, \sigma
        \rangle} = \frac{\Lambda_{a=1}^{{\boldsymbol{\kappa}}} \xi_a^*}{\Lambda_{a=1}^{{\boldsymbol{\kappa}}_a^*} \D_{i_{a}}}
      \prod_{a=1}^{{\boldsymbol{\kappa}}}\res_{\sigma^{I}_{i_{a}} \to -\alpha_{i_{a}} - m_{i_{a}}} \dd \sigma^{I}_{i_{a}} \, \Gamma \cdot e^{\langle \theta, \sigma \rangle} = \\
      = \frac{\Lambda_{a=1}^{{\boldsymbol{\kappa}}} \xi_a^*}{\Lambda_{a=1}^{{\boldsymbol{\kappa}}_a^*} \D_{i_{a}}}
      \prod_{i' \in I'} \Gamma(\langle \D_{i'}, \sigma_{m} \rangle + \alpha_{i'}) \prod_{i \in I} \frac{(-1)^{m_{i}}}{m_{i}!} e^{\langle \theta, \sigma_{m} \rangle},
    \end{multline}
    where $\sigma_{m} = q$.
  \end{proof}
  Substituting this expression into~\eqref{eq:residueExpansion1} we prove the proposition.
  
 \end{proof}

\begin{Corollary} \label{cor:analytCont}
   Power series in~\eqref{eq:diskPartitionComponent2} for different stability chambers are analytic continuations of each
   other in the parameter $\zeta = \Re(\theta)$. The analytic continuation is through the integral representation of the disk partition function.
   This analytic continuation can be understood as a particular case of the Crepant Transformation Conjecture~\cite{Ru99, Ru02, Ru06} for 
    toric Deligne-Mumford stacks proved in~\cite{CIJ}.
\end{Corollary}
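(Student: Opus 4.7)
The plan is to view the integral representation $Z_{D^2}(\bfL_\St)$ in \eqref{eq:diskPartitionComponent} as a single holomorphic function of $\theta = \zeta + 2\pi\sqrt{-1}B$ whose domain of convergence is determined only by the imaginary part $B$, and then to read the corollary directly off the Higgs-Coulomb correspondence (Theorem \ref{the:expansion}).

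First I would observe that for any $\delta \in \fg_{\bR}$ satisfying $\langle \D_i, \delta\rangle + \alpha_i > 0$ for all $i$, the contour $\cC_{\dd\sigma}(\delta) = \delta + \sqrt{-1}\fg_{\bR}$ is independent of the stability parameter $\zeta$. Along this contour the exponential factor satisfies $|e^{\langle \zeta, \sigma\rangle}| = e^{\langle \zeta, \delta\rangle}$, a constant in $\sigma$, so the convergence of the integral is controlled entirely by the factor $\Gamma \cdot e^{2\pi\sqrt{-1}\langle B + \St, \sigma\rangle}$ on the imaginary fiber. By Lemma \ref{lem:convImaginary} applied with $k=0$ (equivalently the convergence bound $U_\St$ from the remark following the definition), this integral converges absolutely and uniformly on compacta whenever $B \in U_\St$, with no restriction on $\zeta \in \fg_\bR^\vee$. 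Consequently the integral defines a holomorphic function of $\theta$ on the tube domain $\fg_\bR^\vee + 2\pi\sqrt{-1}\, U_\St$.

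Next I would apply Theorem \ref{the:expansion} chamber by chamber. Given two maximal cones $C, C'$ of the secondary fan with open subsets $U_C, U_{C'}$ as in the statement of the theorem, the chamber power series $Z_{D^2}(\bfL_\St)_C$ and $Z_{D^2}(\bfL_\St)_{C'}$ each coincide with the same integral $Z_{D^2}(\bfL_\St)$ on the nonempty open sets $U_C \times U_\St$ and $U_{C'} \times U_\St$ respectively. Since both expansions extend, via this equality, to the common tube domain where the integral is holomorphic, and since the extended functions agree (they are both equal to the integral), the series $Z_{D^2}(\bfL_\St)_C$ and $Z_{D^2}(\bfL_\St)_{C'}$ are analytic continuations of each other in $\zeta$ through the integral.

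For the identification with the Crepant Transformation Conjecture, I would use the expression derived earlier in the paper which identifies $Z_{D^2}(\bfL_\St)_C\bigr|_{z=1}$ with the $\tT$-equivariant central charge $Z^{\tT}(\bfL_\St^{\tT})$ built from the $I$-function of the toric DM stack $\sX_\zeta$ associated with $C$. The analytic continuation of the integral across a wall between $C$ and $C'$ then matches the crepant transformation relating $I$-functions of $\sX_{\zeta}$ and $\sX_{\zeta'}$, which for toric DM stacks is exactly the Mellin-Barnes continuation proved in \cite{CIJ}. The main obstacle I anticipate is the bookkeeping required to verify that the branches of the logarithms and the identification of $B = \Im(\theta)/(2\pi)$ with the Fourier-Mukai $B$-field are consistent between our integral continuation and the continuation in \cite{CIJ}; once this is checked, the corollary follows by uniqueness of analytic continuation on the connected component of the secondary fan complement.
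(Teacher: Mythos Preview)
Your proposal is correct and matches the paper's intended argument: the corollary is stated without a separate proof because it is an immediate consequence of Theorem~\ref{the:expansion} together with the observation (which you make explicit) that the integral $Z_{D^2}(\bfL_\St)$ converges on a tube domain independent of $\zeta$, so the chamber expansions for different maximal cones are restrictions of a single holomorphic function. Your discussion of the CTC identification is more detailed than what the paper provides, but the paper only asserts this connection without proof, citing \cite{CIJ}.
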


\subsection{Disk partition functions and wall-crossing}

We defined the chamber disk partition function components corresponding to the phases of the GLSM
(cones of the secondary fan). The wall-crossing is given by the analytic continuation formula~\eqref{eq:diskPartitionComponent}.\\

We would like to extend it to wall-crossing for arbitrary branes. Let $\fB = \sum_{\St} c_{\St} \bfL_{\St} \in K([V/G])$. Then for
different characters $\St_{1}$ and $\St_{2}$ the Higgs-Coulomb correspondence~\eqref{eq:diskPartitionComponent2} hold for different domains $B \in U_{\St_{1}}$
and $B \in U_{\St_{2}}$ correspondingly. It could happen that $U_{\St_{1}} \cap U_{\St_{2}} = \mempty$. Then we need another description for analytic
continuation. This description is given in Definition~\ref{def:wall}  for general one wall crossing.\\

First of all, we need to explain the wall-crossing setup that we use. We follow the one stated, for example, in~\cite{BH2}.

Consider two maximal cones $C_+ , C_-$ of the secondary fan which are adjacent along a
codimension one wall $\fh^{\perp} \subset \fg^\vee$. Let $\fh := (\fh^{\perp})^{\perp} \subset \fg$
and pick its integral generator $h \in \bL, \; \fh=\bC h$. Also define
$h_i := \langle \D_i, h \rangle$. Then $\sum_{i=1}^{n+\boldsymbol{\kappa}} h_i  = 0$ since
$\sum_{i=1}^{n+\boldsymbol{\kappa}} \D_i=0$ (Calabi-Yau condition). 

In the terminology of the reference~\cite{BH2} $h = (h_{1}, \ldots, h_{n+{\boldsymbol{\kappa}}})$ defines
the circuit that is associated to the wall-crossing.  We define
$$I_{\pm} := \{ i \; | \; \pm h_{i} > 0  \}.$$
The circuit itself is $\{v_{i} \; : \; i \in I_{+} \cup I_{-}\}$.

Let $I_{0} \subset (I_{+} \cup I_{-})'$.
If $I = I_{0} \sqcup \{i_{0} \}$ is a minimal anticone of $C_{\pm}$ where
$i_{0} \in I_{\pm}$, then $I_{0} \sqcup \{i_{\pm}\}$ is an anticone of $C_{\pm}$ for each
$i_{\pm} \in I_{\pm}$. Modification along the circuit is obtained by replacing all the cones
of the form $I_{0} \sqcup \{i_{+}\}$ by the anticones $I_{0} \sqcup \{i_{-}\}$.
We denote by $\cA^{0}_{C_{0}}$ the set of all such $I_{0}$. 

Following~\cite{BH2} we call all the anticones of this form essential anticones. We have
$$ \cA^{\min}_{C_{\pm}} = \cA^{\ess}_{C_{\pm}} \sqcup \cA^{\noness}_{C_{\pm}}.$$
\begin{lemma}
  Nonessential anticones always contain at least one element from both $I_{+}$ and $I_{-}$.
\end{lemma}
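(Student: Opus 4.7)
The plan is to prove the lemma by contrapositive: assuming WLOG that $I\in\cA_{C_+}^{\min}$, I will show that if either $I\cap I_+=\emptyset$ or $I\cap I_-=\emptyset$ then $I$ must be essential. The argument is a codimension-count applied to the intersection of $\overline{\angle_I}$ with the wall hyperplane $\fh^\perp$.

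First I would observe that $I\cap I_+\neq\emptyset$ automatically for any minimal anticone of $C_+$: pick any $\zeta\in C_+$, so $\langle \zeta,h\rangle>0$; writing $\zeta=\sum_{i\in I}a_i\D_i$ with $a_i>0$ yields $0<\sum_{i\in I}a_i h_i$, forcing some $i\in I$ with $h_i>0$, i.e.\ $i\in I_+$. In particular, ruling out the case $I\cap I_+=\emptyset$ is immediate.

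The substantive step is to rule out $I\cap I_-=\emptyset$ for nonessential $I$. Under this hypothesis $h_i\ge 0$ for every $i\in I$, so $\overline{\angle_I}\subset\{x:\langle x,h\rangle\ge 0\}$. The intersection with the wall becomes
\[
\overline{\angle_I}\cap \fh^\perp=\Big\{\sum_{i\in I}a_i\D_i : a_i\ge 0,\ \sum_{i\in I}a_ih_i=0\Big\},
\]
and nonnegativity of the $h_i$ on $I$ forces $a_i=0$ whenever $i\in I\cap I_+$. Thus this intersection is the cone generated by $\{\D_i:i\in I\setminus I_+\}$, of dimension at most $\boldsymbol{\kappa}-|I\cap I_+|$. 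On the other hand, $\overline{C_+}\cap\fh^\perp$ is a codimension-one face of the $\boldsymbol{\kappa}$-dimensional cone $\overline{C_+}$, hence has dimension $\boldsymbol{\kappa}-1$, and it is contained in $\overline{\angle_I}\cap\fh^\perp$ because $C_+\subset\angle_I$. Comparing dimensions gives $|I\cap I_+|\le 1$, and combined with the previous step $|I\cap I_+|=1$. Writing $I=\{i_+\}\sqcup I_0$ with $i_+\in I_+$, the assumption $I\cap I_-=\emptyset$ gives $I_0\subset(I_+\cup I_-)'$, so $I$ is essential, contradicting the hypothesis.

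The case $I\in\cA_{C_-}^{\min}$ is identical with $I_+\leftrightarrow I_-$ and $h\leftrightarrow -h$, completing the proof. The main (minor) obstacle is checking carefully that $\overline{C_+}\cap\fh^\perp$ really has dimension $\boldsymbol{\kappa}-1$, which follows from $\fh^\perp$ being the separating wall between two maximal cones of the secondary fan; everything else is a clean dimension count.
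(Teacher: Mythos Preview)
Your proof is correct and follows essentially the same approach as the paper: both argue that if a minimal anticone $I$ of $C_+$ misses $I_-$, then the number of elements of $I\cap I_+$ is forced to be exactly one by a dimension count (the paper phrases this as ``otherwise $C_\pm$ would be adjacent along a wall of higher codimension''), whence $I$ is essential. Your version simply makes this dimension count explicit via the inclusion $\overline{C_+}\cap\fh^\perp\subset\overline{\angle_I}\cap\fh^\perp$.
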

\begin{proof}
  Any minimal anticone must contain at least one number from $I_{+} \cup I_{-}$ for dimensional
  reasons. If it does not contain numbers from $I_{-}$, then it is essential if
  it contains exactly one number from $I_{+}$ and if it contains more than 1, then the cones
  $C_{\pm}$ are adjacent along the wall of higher codimension.
\end{proof}

\begin{remark}
  We use anticones instead of the cones because they are better adjusted to partition functions
  even though they are completely interchangeable. Each minimal anticone corresponds to a torus fixed
  point of the corresponding toric variety.
\end{remark}

\begin{definition}[Wall hemisphere partition function] \label{def:wall}
	 Let  $C_{0} = C_{+} \cap C_{-}$ be a codimension one cone of the secondary fan and
	\begin{equation} \label{eq:GRR}
		|\langle B + \St, h \rangle| < \sum_{i}|\langle \D_{i}, h \rangle| /4.
	\end{equation}
	For each $I_{{\boldsymbol{\kappa}}-1} \in \cA^{0}_{C_{0}}$ and $m_{I_{{\boldsymbol{\kappa}}-1}} \in (\bZ_{\ge 0})^{{\boldsymbol{\kappa}}-1}$ we choose $p(m_{I_{{\boldsymbol{\kappa}}-1}}) \in \pi^{I_{{\boldsymbol{\kappa}}-1}}_{m_{I_{{\boldsymbol{\kappa}}-1}}}$ generic with the property that cardinality of
    $(p(m) \mp \bR_{\ge 0} h) \cap \bK^{\mathrm{eff}}_{I_{{\boldsymbol{\kappa}}-1} \cup \{i_{\pm}\}}(\alpha)$ is bounded by $c|m|$ for all $i_{\pm} \in I_{\pm}$
    (this property is satisfied if, for example, all $p(m)$ are on the same hyperplane)~\footnote{This property is rather technical and
      can be made even weaker. The meaning of this property is that $p(m)$ must not go to infinity in the direction of $\pm h$ too fast
    as $|m| \to \infty$}.

	Define the wall hemisphere partition function corresponding to the wall $C_0$:
	\begin{equation} \label{eq:wallPartitionFunction}
		Z_{D^{2}}(\bfL_\St)_{C_{0}} := \sum_{I_{{\boldsymbol{\kappa}}-1} \in \cA^{0}_{C_{0}}} Z_{I_{{\boldsymbol{\kappa}}-1}}^{\ess}(\bfL_{\St}) + \sum_{I \in \cA^{\noness}_{C_{\pm}}}Z_I(\bfL_{\St}),
	\end{equation}
	where
	\begin{equation}
		Z_{I}(\bfL_{\St}) = \sum_{q \in \bK^{\eff}_{I}(\alpha)} \mathrm{sign}\left(\frac{\dd \sigma}{\Lambda_{a=1}^{{\boldsymbol{\kappa}}}\D_{i_{a}}}\right) \int_{\cC^{I^{\mathrm{or}}}(q)} \dd \sigma \; \Gamma \cdot e^{\langle \theta + 2\pi\sqrt{-1} \St, \sigma \rangle}, \;\;\; I \in \cA^{\noness}_{C_{\pm}}.
	\end{equation}
	and
	\begin{multline} \label{eq:essential1}
		Z^{\ess}_{J}(\bfL_{\St}) = \sum_{m \in (\bZ_{\ge 0})^{{\boldsymbol{\kappa}}-1}}\biggl(-\mathrm{sign}\left(\frac{\dd \sigma}{\Lambda_{a=1}^{{\boldsymbol{\kappa}}-1}\D_{j_{a}}\wedge \Omega}\right) \int_{\cC^{J^{\mathrm{or}}}_{\Omega}(p(m))} \dd \sigma \; \Gamma \cdot e^{\langle
			\theta + 2\pi\sqrt{-1} t, \sigma \rangle} + \\ +  \sum_{i_{\pm} \in I_+ \cup I_-}\sum_{q \in (p(m) \mp \bR_{\ge 0} h) \cap \bK^{\mathrm{eff}}_{J \cup \{i_{\pm}\}}(\alpha)} \mathrm{sign}\left(\frac{\dd \sigma}{\Lambda_{a=1}^{{\boldsymbol{\kappa}}}\D_{i_{a}}}\right)\int_{\cC^{I^{\mathrm{or}}}(q)} \dd \sigma \; \Gamma \cdot e^{\langle\theta+2\pi\sqrt{-1}\St, \sigma
			\rangle} \biggr),
	\end{multline}
	where $\Omega$ is any volume form in $(\fg_{J})_{\bR}$. Note that in~\eqref{eq:essential1} the sum in the second sum is finite.
\end{definition}
\begin{remark}
  There seems to be an infinite number of choices of points $p(m)$
  in the definition of the wall hemisphere partition function. This is due to the fact that in the general case the
  cones in $\fg$ corresponding to essential anticones in the adjacent phases intersect and one cannot split the corresponding effective classes
  uniformly. As we will see below the definition is independent of these choices.
\end{remark}

\begin{remark}[Grade restriction rule] \label{cor:GRR}
  The formula~\eqref{eq:GRR} is called the {\em grade restriction rule} since it puts a restriction of
  the character $\St$ (grading of the brane).
  Disk partition function components can be analytically continued directly along the walls of the
  secondary fan if the grade restriction rule is satisfied as is stated in the following theorem.
\end{remark}

\begin{Theorem}[Wall-crossing] \label{th:wallCrossing}
  In the setting of Definition~\ref{def:wall} there exists a connected open set $U_{C_0} \subset \ga_{\bR}$ such that 
  $U_{C_0} \cap U_{C_{+}}$ and $U_{C_0} \cap U_{C_{-}}$ are nonempty and the wall hemisphere partition function converges for all $\zeta \in U_{C_0}$, $B$ satisfies~\eqref{eq:GRR} and for all $\zeta \in U_{C_0} \cap U_{C_{\pm}}$ it is equal to the chamber hemisphere partition function
  \begin{equation}
		Z_{D^{2}}(\bfL_{\St})_{C_{0}} = Z_{D^{2}}(\bfL_{\St})_{C_{\pm}}
  \end{equation}
  In particular, the wall hemisphere partition function does not depend on the choices in the definition.
\end{Theorem}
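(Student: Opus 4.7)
The plan is to prove Theorem \ref{th:wallCrossing} by realizing the wall hemisphere partition function as a deformation of the chamber integral with the contour pushed onto the wall $\fh^\perp$. The key inputs are Lemma \ref{lem:residues} (with its one-plane specialization Lemma \ref{lem:contDef}) and the convergence criterion Lemma \ref{lem:convImaginary}, applied at the codimension-one level $k = \boldsymbol{\kappa}-1$ with $I_k = J \in \cA^0_{C_0}$ so that the residual Lie subalgebra $\fg_J$ is one-dimensional and spanned by $h$.

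First I will establish convergence of the wall hemisphere partition function on an open set $U_{C_0} \subset \fg_\bR$ whose image under $\Re$ meets both $C_+$ and $C_-$. For each $J \in \cA^0_{C_0}$ and each $m \in (\bZ_{\geq 0})^{\boldsymbol{\kappa}-1}$, the wall contour $\cC^{J^{\mathrm{or}}}_{\Omega}(p(m))$ is of the form $(S^1)^{\boldsymbol{\kappa}-1} \times (\delta + \sqrt{-1}\bR h)$. Lemma \ref{lem:convImaginary} applies with $\fg_J = \bC h$, and the convergence hypothesis \eqref{eq:convCondition1} restricted to $\nu \in \fg_J$ is exactly the grade restriction rule \eqref{eq:GRR}. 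The choice of $p(m)$ (with the growth restriction in Definition \ref{def:wall}) guarantees that the sum over $m$ converges geometrically, with the same estimates as in Proposition \ref{prop:apConv}. Together with the convergence of the finitely many nonessential contributions and of the (finite) correction sums, this gives a connected open set $U_{C_0}$ on which $Z_{D^2}(\bfL_\St)_{C_0}$ is well-defined, and one checks it extends into both $U_{C_0} \cap U_{C_+}$ and $U_{C_0} \cap U_{C_-}$.

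Next I will establish the equality $Z_{D^2}(\bfL_\St)_{C_0} = Z_{D^2}(\bfL_\St)_{C_\pm}$ for $\zeta \in U_{C_0} \cap U_{C_\pm}$. The nonessential term is identical to its contribution in $Z_{D^2}(\bfL_\St)_{C_\pm}$, because $\cA^{\noness}_{C_+} = \cA^{\noness}_{C_-}$ and the residue formula of Lemma \ref{lem:residue} is chamber-independent for each fixed minimal anticone $I$ and each $q \in \bK^{\eff}_I(\alpha)$. For the essential part, I apply Lemma \ref{lem:contDef} (equivalently, one step of Lemma \ref{lem:residues} at $k = \boldsymbol{\kappa}-1$) to each wall cycle $\cC^{J^{\mathrm{or}}}_{\Omega}(p(m))$, choosing the direction $f = \mp h$ according to whether $\zeta \in C_\pm$ so that $\langle \zeta, f\rangle < 0$. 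This converts the wall integral into the sum of all residues at points $q \in (p(m) \mp \bR_{\geq 0}h) \cap \bK^{\eff}_{J \cup \{i\}}(\alpha)$ for $i \in (J)'$. Using the sign analysis of Lemma \ref{lem:residues}, only $i \in I_\pm$ contribute effectively (the other terms cancel since inward and outward ray-crossings of $\angle^{*,\zeta}_{J \subset I}$ carry opposite signs, as in the two-point contribution computation in the proof of Lemma \ref{lem:residues}). Summed against the explicit correction term in \eqref{eq:essential1}, the net result over $m$ is exactly the union $\bigcup_m (p(m) \mp \bR_{\geq 0}h) \cap \bK^{\eff}_{J \cup \{i_\pm\}}(\alpha) = \bK^{\eff}_{J \cup \{i_\pm\}}(\alpha)$, reproducing the essential residue contribution of the chamber expansion \eqref{eq:diskPartitionComponent2} in $C_\pm$.

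Finally, independence from the choices of $p(m)$ follows because two choices $p(m), p'(m) \in \pi^J_{m_J}$ differ by a path in $\pi^J_{m_J} \cong \fg_J \simeq \bC$ avoiding the discrete set of intersections with additional polar hyperplanes; the corresponding contour cycles are homologous in $\fg \setminus \mathrm{Polar}$ and the correction sums differ by exactly the residues accounted for by the difference of contours. The main obstacle is the combinatorial bookkeeping in the essential term: one must carefully verify that the truncated residue corrections together with the contour-deformation residues reassemble into the \emph{full} effective class sum $\bK^{\eff}_{J \cup \{i_\pm\}}(\alpha)$ in each chamber, uniformly over the varying $p(m)$, and simultaneously show absolute convergence so that reorderings of the series are legitimate. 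This is a uniform version of the argument of Corollary \ref{cor:contDef} and ultimately rests on the estimates of Proposition \ref{prop:integralBound}, which require the grade restriction rule to control exponential growth along the wall direction $h$.
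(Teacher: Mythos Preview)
Your approach matches the paper's: establish convergence on an open set straddling the wall, then for $\zeta \in U_{C_0} \cap U_{C_\pm}$ deform each essential wall cycle $\cC^{J^{\mathrm{or}}}_\Omega(p(m))$ along $\pm h$ via Corollary~\ref{cor:contDef} and combine with the correction term to recover the chamber sum. The identification of the grade restriction rule~\eqref{eq:GRR} with the hypothesis of Lemma~\ref{lem:convImaginary} at $\fg_J = \bC h$ is exactly right, and the nonessential part is indeed common to both chambers.

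There is, however, one concrete misattribution in your essential-term argument. At $k = \boldsymbol{\kappa}-1$ there is no further recursion, so the two-point inward/outward cancellation from the proof of Lemma~\ref{lem:residues} is vacuous here: the one-dimensional cone $\angle^{*,\zeta}_{J \subset I}$ has a single boundary face, and a ray meets it at most once. When you deform the wall integral in the direction $f$ appropriate to $C_+$, you collect residues for all $i \in I_+ \cup I_-$ (not all $i \in (J)'$; hyperplanes with $h_i = 0$ are parallel to $\fg_J$ and are never crossed), and the signs $\mathrm{sign}(\langle \D_i, f\rangle)$ differ between $I_+$ and $I_-$. The $i_-$ contributions do not cancel among themselves; rather, the correction term in~\eqref{eq:essential1} is built to play a dual role. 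Its $i_-$ residues lie on the \emph{same} ray as the deformation's $i_-$ residues but carry the opposite sign, so they cancel pairwise; its $i_+$ residues lie on the \emph{complementary} ray and, together with the deformation's $i_+$ residues, tile the full slice $\bK^{\eff}_{J \cup \{i_+\}}(\alpha) \cap \pi^J_{m_J}$. Summing over $m_J$ then yields $Z^{\ess}_J(\bfL_\St) = \sum_{i_+ \in I_+} Z_{J \cup \{i_+\}}(\bfL_\St)$. Your displayed identity $\bigcup_m (p(m) \mp \bR_{\ge 0} h) \cap \bK^{\eff}_{J \cup \{i_\pm\}}(\alpha) = \bK^{\eff}_{J \cup \{i_\pm\}}(\alpha)$ is therefore not the right bookkeeping: the left side records only the correction residues and omits the deformation contribution on the opposite ray. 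Once you replace the Lemma~\ref{lem:residues} appeal by this direct sign-matching between deformation and correction, the argument goes through exactly as in the paper.
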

\begin{proof}

  The convergence of the non-essential part of the right hand side is established in Proposition~\ref{prop:apConv}.

  Consider the essential part. Let $\zeta \in C_{+} \cup C_{-}$. Then for each $J$ we can write $\zeta = \sum_{j \in J} \zeta^{J}_{j} \D_{j}$, where
  $\zeta^{J}_{j} > 0$.  Note that all $\D_{j}$ are in the wall hyperplane $(\bR h)^{\perp}$, so $\langle \D_{j}, h \rangle = 0$ and
  for any $c_{1}>0$ we can choose $\zeta$  far enough in the interior of $C_{+} \cup C_{-}$ such that
  $\langle \zeta, p(m)/|p(m)|\rangle < -c_{1}$ for all $m$. Moreover, the same inequality holds for all $q$ in~\eqref{eq:essential1}.
  Then we can use Proposition~\ref{prop:integralBound} to estimate the integral summands and the residue summands
  in~\eqref{eq:essential1} by $e^{-c_{2}|p(m)|}$ and $e^{-c_{2}|q|}$
  respectively. The number of summands for each $m$ is bounded by $c_{3} |m|$ because condition $q \in (p(m)\mp \bR_{\ge 0}h) \cap \bK^{\mathrm{eff}}_{J}
  \sqcup \{i_{\pm}\}$ is linear in $m$ if $p(m)$ are chosen as in the definition~\ref{def:wall}. 
 Therefore, The essential summand $Z^{\mathrm{ess}}_{J}(t)$ is bounded by 
 $\sum_{m_{J} \in (\bZ_{\ge 0})^{{\boldsymbol{\kappa}}-1} } c_{3} |m_J|e^{-c_{2}\mathrm{dist}(\pi^{J}_{m_{J}}, 0)} < \infty$. 
  Moreover, the convergence is an open condition on $\zeta$, so it must hold in an open neighbourhood
  This finishes the proof of convergence.

  Consider an essential integral term in~\eqref{eq:essential1}. If $\zeta \in C_{+}$ we can apply Corollary~\ref{cor:contDef}:
  \begin{equation}
    \cC^{J^{\mathrm{or}}}_{\Omega}(p(m)) = -\sum_{i_{\pm} \in I_{\pm}} \sum_{m_{i_{\pm}} \ge 0}
    \cC^{(J^{\mathrm{or}}, i_{\pm})}_{\iota_{h}\Omega}(\{p(m) + \bR_{\ge 0}h\} \cap \pi^{i_{\pm}}_{m_{i_{\pm}}}).
  \end{equation}
  By slight abuse of notation we can write $\Omega = c_{i_{\pm}} \D_{i_{\pm}} \in (\fg_{J})_{\bR}^{*}$, so $\iota_{h} \Omega = \pm c_{i_{\pm}}$. So
  \begin{equation}
    \mathrm{sign}\left(\frac{\dd \sigma}{\Lambda_{a=1}^{{\boldsymbol{\kappa}}-1}\D_{j_{a}}\wedge \Omega}\right) = \mathrm{sign}\left(c_{i_{\pm}}\frac{\dd \sigma}{\Lambda_{a=1}^{{\boldsymbol{\kappa}}-1}\D_{j_{a}}\wedge \D_{i_{\pm}}}\right),
  \end{equation}
  and therefore
  \begin{equation}
    -\mathrm{sign}\left(\frac{\dd \sigma}{\Lambda_{a=1}^{{\boldsymbol{\kappa}}-1}\D_{j_{a}}\wedge \Omega}\right)\cC^{J^{\mathrm{or}}}_{\Omega}(p(m)) = \pm \sum_{i_{\pm}} \sum_{q \in (p(m)+\bR_{\ge 0} h) \cap \bK^{\eff}_{J \cup \{i_{\pm}\}}(\alpha)}\mathrm{sign}\left(\frac{\dd \sigma}{\Lambda_{a=1}^{{\boldsymbol{\kappa}}-1}
   \D_{j_{a}}\wedge \D_{i_{\pm}}}\right)\cC^{(J^{\mathrm{or}}, i_{\pm})}(q).
  \end{equation}

  Combining this with the second term from~\eqref{eq:essential1} we obtain:
  \begin{equation}
    Z^{\ess}_{J}(\bfL_{\St}) = \sum_{i_{+}}Z_{J \cup \{i_{+}\}}(\bfL_{\St}).
  \end{equation}
  In the same way if $\zeta \in C_{-}$ we compute

  \begin{equation}
    Z^{\ess}_{J}(\bfL_{\St}) = \sum_{i_{-}}Z_{J \cup \{i_{-}\}}(\bfL_{\St}).
  \end{equation}
  The theorem is proved.~\footnote{We remark about the convergence again, the condition in Corollary~\ref{cor:contDef} is stricter than in the
  theorem, but equality of analytic functions extends to the maximal domain where both converge.}
\end{proof}

\appendix
\section{Convergence of multivariate hypergeometric functions}\label{appendix-A} 

This is mostly known due to many people:~\cite{Horn} or, in the more systematic exposition \cite{GKS},~\cite{SadykovTsikh} and others. The authors were not able to find some of the results,
so we provide a short overview of the subject here. We use the notations of the main part of the
paper, particularly Section~\ref{sec:Coulomb}.\\

Let $\theta = \zeta+2\pi\sqrt{-1}B$ and $\sigma = \tau+\sqrt{-1}\nu$ represent complex variables
on $\fg^\vee$ and $\fg$ correspondingly
and $\D_{i} \in \bL^{\vee}, \; i \le n+{\boldsymbol{\kappa}}$ be a collection of the vectors that spans $\fg^\vee$
over $\bC$ such that $\sum_{i=1}^{n+\boldsymbol{\kappa}} \D_{i} = 0$ (Calabi-Yau condition).
Let $\alpha \in \bC^{n+{\boldsymbol{\kappa}}}$ be a generic vector and
\begin{equation}
  \Gamma = \Gamma(\sigma) = \prod_{i=1}^{n+{\boldsymbol{\kappa}}} \Gamma(\langle \D_{i}, \sigma \rangle
  + \alpha_{i}).
\end{equation}

We remind that
the secondary fan $\Sigma^{2}$ is defined as a fan with $\Sigma^{2}(1) =
\{\D_{i}\}_{i=1}^{n+{\boldsymbol{\kappa}}}$ and whose maximal cones are all possible intersections of
$\angle_{I}$ of dimension ${\boldsymbol{\kappa}}$. 

First of all we need some basic results about the gamma function. 
The Stirling approximation:
\begin{equation}
  \Gamma(x+iy) = (2\pi)^{1/2} z^{z-1/2} e^{-z}(1+O(\frac{1}{|z|})), \;\;\; |\mathrm{Arg}(z) | < \pi -\alpha
\end{equation}
where $\alpha>0$, and the Landau notation is
$$ 
f(z) = O(\frac{1}{|z|}) \text{ is equivalent to } \limsup_{|z| \to \infty} |z| |f(z)| < \infty
$$
In particular,
\begin{equation}\label{eqn:stirling} 
  |\Gamma(x+iy)| = (2\pi)^{1/2} |z|^{x-1/2} e^{-x} e^{-y\mathrm{Arg}(z)}(1+O(\frac{1}{|z|})), \;\;\; |\mathrm{Arg}(z)| < \pi -\alpha.
\end{equation}
Now let $S_{\delta} = \bR \backslash \bigcup_{n=0}^{\infty} (-n-\delta, -n+\delta)$ be a domain in $\bR$ separated
from poles of the gamma function by a small positive constant $\delta$.

\begin{lemma} \label{lem:gammaBound}
  Let $z = x+\sqrt{-1}y$ and $x \in S_{\delta}$. Then
  \begin{equation}
    \label{eq:gammaBound}
    \Gamma(x) < const \cdot |x|^{x-\frac12} e^{-x}.
  \end{equation}
  \begin{equation}
  	\label{eq:gammaBoundCplx}
  	|\Gamma(z)| < const \cdot |z|^{x-\frac12} e^{-\mathrm{min}(x,0)} e^{-\frac{\pi|y|}{2}}.
  \end{equation}
  In addition, if $x \in S_\delta \cap K$, where $K$ is a compact set, then
  \begin{equation} \label{eq:imAs} 
  |\Gamma(z)| < const (|y|+1)^{x-1/2} e^{-\frac{\pi |y|}2}. 
  \end{equation}

\end{lemma}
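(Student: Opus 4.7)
The three inequalities are standard consequences of the Stirling asymptotic~\eqref{eqn:stirling}, together with the reflection formula $\Gamma(z)\Gamma(1-z)=\pi/\sin(\pi z)$; the whole point of the hypothesis $x\in S_\delta$ is that it keeps us a definite distance from the poles of $\Gamma$ and of $1/\sin(\pi z)$, so that all implied constants are uniform. I will treat three overlapping ranges of $x=\Re(z)$ and verify the three claimed bounds simultaneously.

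First, when $x\ge -1/2$ (say), Stirling~\eqref{eqn:stirling} applies directly at $z=x+\sqrt{-1}y$ because $|\mathrm{Arg}(z)|\le\pi/2$, uniformly bounded away from $\pi$. The real case~\eqref{eq:gammaBound} is then immediate by setting $y=0$, and for the complex case we observe that $|y\mathrm{Arg}(z)|\to\tfrac{\pi}{2}|y|$ as $|y|\to\infty$; more precisely $y\mathrm{Arg}(z)\ge \tfrac{\pi}{2}|y|-C$ for some constant $C=C(x_0)$ uniform when $x$ is bounded below. Combining this with $e^{-x}\le e^{-\min(x,0)}$ gives~\eqref{eq:gammaBoundCplx} in this range. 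For $x$ in a compact subset of $S_\delta$, $|z|$ and $|y|+1$ are comparable, so one can replace $|z|^{x-1/2}$ by $(|y|+1)^{x-1/2}$ at the cost of adjusting the constant, yielding~\eqref{eq:imAs}.

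Second, for $x\le -1/2$ with $x\in S_\delta$, I apply the reflection formula to write
\begin{equation}
|\Gamma(z)|=\frac{\pi}{|\sin(\pi z)|\cdot|\Gamma(1-z)|}.
\end{equation}
Now $1-z=(1-x)+\sqrt{-1}(-y)$ has real part $\ge 3/2$, so Stirling applies to $\Gamma(1-z)$ and gives $|\Gamma(1-z)|\asymp |1-z|^{(1-x)-1/2}e^{-(1-x)}e^{-\pi|y|/2+O(1)}$. Meanwhile $|\sin(\pi z)|^2=\sinh^2(\pi y)+\sin^2(\pi x)$; the condition $x\in S_\delta$ forces $|\sin(\pi x)|\ge c_\delta>0$, and thus $|\sin(\pi z)|\ge c_\delta' (1+e^{\pi|y|})$ uniformly in $x\in S_\delta$. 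Multiplying the two estimates, the factor $1/|\sin(\pi z)|\asymp e^{-\pi|y|}$ combines with the $e^{-\pi|y|/2}$ from $1/|\Gamma(1-z)|$ in the wrong direction, but one must be careful: the Stirling contribution is $|\Gamma(1-z)|^{-1}\asymp |1-z|^{-(1/2-x)}e^{1-x}e^{+\pi|y|/2}$, so the product yields $|z|^{x-1/2}e^{-x}e^{-\pi|y|/2}$ up to constants (using $|1-z|\asymp|z|$ for $|z|$ large and $e^{1}$ absorbed in the constant). Since $\min(x,0)=x$ when $x\le 0$, this is precisely~\eqref{eq:gammaBoundCplx}; setting $y=0$ (and using $|\sin(\pi x)|\ge c_\delta$ again) gives~\eqref{eq:gammaBound}, and restricting $x$ to a compact set lets us replace $|z|$ by $|y|+1$ for~\eqref{eq:imAs}.

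The main technical point to watch is the uniformity of all constants over $x\in S_\delta$: it comes down to the two facts that (a)~$|\sin(\pi x)|\ge c_\delta$ on $S_\delta$, and (b)~the error term in~\eqref{eqn:stirling} is $O(1/|z|)$ uniformly in the sector $|\mathrm{Arg}(z)|\le\pi-\alpha$, which covers both $z$ with $\Re z\ge -1/2$ and $1-z$ with $\Re(1-z)\ge 3/2$. These handle the only real obstacle, namely patching the Stirling estimate across $x\to-\infty$ where the sector hypothesis fails; outside of this, the argument is bookkeeping on exponents.
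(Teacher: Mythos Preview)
Your proof is correct and follows essentially the same approach as the paper: Stirling for $\Re(z)$ bounded below, reflection formula plus Stirling for negative $\Re(z)$, with the $S_\delta$ hypothesis controlling $|\sin(\pi x)|$ from below. One small slip: the claim $|\mathrm{Arg}(z)|\le\pi/2$ for $x\ge -1/2$ is false when $x\in[-1/2,0)$ and $y=0$, but since that region intersected with $S_\delta$ is compact and pole-free, $\Gamma$ is bounded there and the inequalities hold trivially, so the argument goes through.
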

\begin{proof} By Equation \eqref{eqn:stirling}, there exist positive constants $c_1$, $c_2$ such that 
	\begin{equation} \label{eq:positiveArgumentBounds}
	c_{1}|z|^{x-\frac12} e^{-x} e^{-y\mathrm{Arg}(z)} < |\Gamma(z)| < c_{2} |z|^{x-\frac12} e^{-x}e^{-y\mathrm{Arg}(z)}, \;\;\; x > \delta.
	\end{equation}
	If $x > 0$, then $x>\delta$ since $x\in S_\delta$. We can write $\mathrm{Arg}(z) = \mathrm{Arctan}(y/x) = \mathrm{sign}(y)\pi/2-\mathrm{Arctan}(x/y)$, so we have
	$$
	-y\mathrm{Arg}(z) = -y(\mathrm{sign}(y)\pi/2 - \mathrm{Arctan}(x/y)) < \frac{\pi |y|}{2} + x, 
	$$
	since $\mathrm{Arctan}(x/y) < x/y$.
	Therefore,
	\begin{equation} \label{eq:positiveArgumentBoundCplx}
		|\Gamma(z)| < c_{2} |z|^{x-\frac12} e^{-\frac{\pi|y|}{2}}, \;\;\; x > \delta.
	\end{equation}

  If $x < 0$, then $|\sin(\pi x)|> \sin \delta$ since $x\in S_\delta$. We use the reflection formula and~\eqref{eq:positiveArgumentBounds}:
  \begin{equation} 
    |\Gamma(z)| = \frac{\pi}{|\Gamma(1-z)\sin(\pi z)|} < \frac{\pi}{c_1}  |1-z|^{-(1-x)+1/2}e^{1-x} e^{y\mathrm{Arg}(1-z)} |\sin(\pi z)|^{-1}, \;\;\; x < 0.
  \end{equation}
  If $x < 0$ then $|1-z| > |z|$, $|1-z|^{x-1/2} < |z|^{x-1/2}$, and $|\mathrm{Arg}(1-z)| < \frac{\pi}{2}$, so  
  \begin{equation} \label{eq:negativeArgumentBounds}
  	|\Gamma(z)| < \frac{\pi e}{c_1} |z|^{x-1/2}e^{-x} e^{\frac{\pi |y|}{2}}|\sin(\pi z)|^{-1}.
  \end{equation}
  The first formula~\eqref{eq:gammaBound} follows from~\eqref{eq:positiveArgumentBounds} and~\eqref{eq:negativeArgumentBounds}
  for $y=0$. 

We have $\sin(\pi z) = \sin(\pi(x+\sqrt{-1}y)) = \sin(\pi x)\cosh(\pi y) + \sqrt{-1}\cos(\pi x)\sinh(\pi y)$, 
$$
|\sin(\pi z)| \geq |\sin(\pi x) \cosh(\pi y)| >  \sin(\pi\delta)  \frac{e^{\pi y}+ e^{-\pi y}}{2} > \frac{1}{2} \sin(\pi\delta) e^{\pi|y|}. 
$$
  Finally, we use this to simplify~\eqref{eq:negativeArgumentBounds}:
  \begin{equation} \label{eq:negativeArgumentBounds1}
  |\Gamma(z)| < const \cdot |z|^{x-1/2}e^{-x} e^{\frac{-\pi |y|}{2}}.
  \end{equation}
  
  Collecting the formulas~\eqref{eq:positiveArgumentBoundCplx} and~\eqref{eq:negativeArgumentBounds1} we get the inequality~\eqref{eq:gammaBoundCplx}:
  \begin{equation} \label{eq:upperBdCplx}
    |\Gamma(z)| \le const \cdot |z|^{x-1/2}e^{-\min(x,0)}e^{-\pi |y|/2}, \;\;\; x \in S_{\delta},
  \end{equation}
  where the constant depends on $\delta$.
  
  The third formula~\eqref{eq:imAs} follows from~\eqref{eq:upperBdCplx} and $(|y|+const)^x \sim |y|^x, \; y \to \infty$.
  
\end{proof}

We also recall the multivariate Cauchy-Hadamard theorem.
\begin{Theorem}[Cauchy-Hadamard]
  Let
  \begin{equation}
    \sum_{m \in (\bZ_{\ge 0})^{{\boldsymbol{\kappa}}}} c_{m} z^{m}.
  \end{equation}
  The series (absolutely) converges in the polydisk with the multiradii $r = (r_{1},
  \ldots, r_{{\boldsymbol{\kappa}}})$ if
  \begin{equation}
    \lim_{N \to \infty}\mathrm{sup}_{m, |m| = N}(c_{m}r^{m})^{1/|m|} \le 1
  \end{equation}
  and such polydisk is maximal if the left hand side is equal to the right hand side.
\end{Theorem}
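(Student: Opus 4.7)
The plan is to prove the two assertions separately: absolute convergence inside the polydisk, and the fact that the polydisk cannot be enlarged (maximality) when the limit equals $1$.

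For convergence, suppose $L := \lim_{N\to\infty}\sup_{|m|=N}\bigl(|c_m|\,r^m\bigr)^{1/|m|}\le 1$, and fix $z=(z_1,\ldots,z_{{\boldsymbol{\kappa}}})$ with $|z_i|<r_i$ for all $i$. First I would write $|z_i|=\rho_i r_i$ with $\rho_i<1$ and set $\rho := \max_i\rho_i<1$. By definition of $L$, for any $\varepsilon>0$ there exists $N_0$ such that $|c_m|\,r^m \le (1+\varepsilon)^{|m|}$ whenever $|m|\ge N_0$. Then
\begin{equation*}
|c_m z^m| \;=\; |c_m|\prod_{i=1}^{{\boldsymbol{\kappa}}}(\rho_i r_i)^{m_i} \;\le\; |c_m|\,r^m\,\rho^{|m|}\;\le\;\bigl(\rho(1+\varepsilon)\bigr)^{|m|}.
\end{equation*}
Choose $\varepsilon$ so small that $q:=\rho(1+\varepsilon)<1$. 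Since the number of $m\in(\bZ_{\ge 0})^{{\boldsymbol{\kappa}}}$ with $|m|=N$ equals $\binom{N+{\boldsymbol{\kappa}}-1}{{\boldsymbol{\kappa}}-1}=O(N^{{\boldsymbol{\kappa}}-1})$, we get the tail bound
\begin{equation*}
\sum_{|m|\ge N_0}|c_m z^m| \;\le\; \sum_{N\ge N_0}\binom{N+{\boldsymbol{\kappa}}-1}{{\boldsymbol{\kappa}}-1}q^N\;<\;\infty,
\end{equation*}
so absolute convergence at $z$ follows after adding the finitely many terms with $|m|<N_0$. This works uniformly on compact subsets of the open polydisk.

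For the maximality (the ``only if'' in the statement that equality gives a maximal polydisk), I would argue by contrapositive. Suppose the series converges absolutely at some point $z^{(0)}$ with $|z^{(0)}_i|=r'_i>r_i$ for all $i$. Then $|c_m\,(z^{(0)})^m|$ is bounded, say by a constant $M$, so $|c_m|\,r^m\le M\,\eta^{-|m|}$ with $\eta:=\min_i r'_i/r_i>1$. Taking $|m|$-th roots,
\begin{equation*}
\bigl(|c_m|\,r^m\bigr)^{1/|m|}\;\le\; M^{1/|m|}\eta^{-1}\;\xrightarrow[|m|\to\infty]{}\;\eta^{-1}<1,
\end{equation*}
contradicting $L=1$. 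A standard approximation (pick points just inside the putative larger polydisk) handles the case where one is only given open convergence in a bigger polydisk.

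The main obstacle here is very mild: it is the multivariate bookkeeping in absorbing the polynomial count $\binom{N+{\boldsymbol{\kappa}}-1}{{\boldsymbol{\kappa}}-1}$ into the geometric factor $q^N$, and being careful in the maximality direction to interpret ``larger polydisk'' componentwise. Everything else is a one-variable Cauchy--Hadamard argument applied to the weighted coefficients $|c_m|\,r^m$ indexed by total degree $|m|$.
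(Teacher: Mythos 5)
Your proof is correct, but note that the paper does not actually prove this statement: it is recalled as a classical fact (the appendix points to Horn and to Tsikh's book for the systematic theory), so you are supplying an argument where the paper gives only a citation. What you wrote is the standard proof — reduce to total degree $N$, absorb the polynomial number $\binom{N+{\boldsymbol{\kappa}}-1}{{\boldsymbol{\kappa}}-1}$ of multi-indices into a geometric factor for convergence, and use boundedness of the terms at a point with all moduli strictly larger than $r$ for the converse — and both halves are sound. One point worth making explicit is your reading of ``maximal'': your contrapositive (with $\eta=\min_i r_i'/r_i>1$) rules out absolute convergence at any point whose coordinates \emph{all} exceed $r_i$ in modulus, i.e.\ the polydisk cannot be inflated in every direction simultaneously, equivalently $r$ lies on the boundary of the logarithmic image of the convergence domain. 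That is the correct interpretation, and it is the only tenable one: the stronger reading that no single radius can be increased is false as stated (for $c_m\equiv 1$ in two variables and $r=(1,\tfrac12)$ one has $\limsup_{|m|\to\infty}(|c_m|r^m)^{1/|m|}=1$, attained along $m_2=0$, yet the second radius can be enlarged to $1$ with convergence intact), and indeed your argument visibly needs all ratios $r_i'/r_i$ to be $>1$. Stating this caveat would make the write-up complete; otherwise there is nothing to add.
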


\begin{Proposition}\label{prop:apConv}
  \begin{enumerate}
    \item The domain of convergence of

        \begin{equation} \label{eq:conv2}
          Z_{I} = \sum_{m \in (\bZ_{\ge 0})^{{\boldsymbol{\kappa}}}}
          \prod_{i' \in I'} \Gamma(\langle \D_{i'}, \sigma_{m} \rangle + \alpha_{i'})
          \prod_{i \in I} \frac{(-1)^{m_{i}}}{m_{i}!} \exp(\langle \theta,
          \sigma_{m} \rangle),
        \end{equation}

      is non-empty. Moreover, if the series converges at $\zeta_{0}$, then it also converges if $\zeta \in \{\zeta_{0}\} - \angle_{I}$.
    \item The domain of convergence of the series is $U_{I} +\sqrt{-1} \bR^{{\boldsymbol{\kappa}}}$, where  $U_{I} \subset \bR^{{\boldsymbol{\kappa}}}$
      is defined by the constraints
          \begin{equation}
            \langle \zeta + \log\Psi(\sigma),  \sigma \rangle = \sum_{i \in I} (\langle \zeta, \D^{*,I}_{i}\rangle + \log \Psi_{i}(\sigma))\sigma_{i} > 0, \; \sigma \in \angle^{*}_{I},
          \end{equation}
      where $\Psi(\sigma) = (\Psi_{1}(\sigma), \ldots, \Psi_{{\boldsymbol{\kappa}}}(\sigma))$ is the Horn vector defined below in the proof. 

  \end{enumerate}
\end{Proposition}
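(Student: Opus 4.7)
The plan is to determine the domain of convergence by combining Stirling's asymptotics for the gamma factors with the multivariate Cauchy--Hadamard criterion, and then read off the two claims from the explicit form of the resulting convex cone. Write $T_m$ for the $m$-th summand of $Z_I$, set $\sigma_m = -\sum_{i \in I}(m_i + \alpha_i)\D_i^{*,I}$, and parametrize $m = N\nu$ with $\nu$ on the compact simplex $\{\nu \in (\bR_{\geq 0})^{|I|} : \sum \nu_i = 1\}$. The first step is to compute $\lim_{N \to \infty} N^{-1}\log|T_m|$ uniformly in $\nu$. For each $i' \in I'$ the gamma argument $\langle \D_{i'},\sigma_m\rangle+\alpha_{i'}$ has real part $-N\sum_i \nu_i \langle \D_{i'},\D_i^{*,I}\rangle+O(1)$, so by Lemma~\ref{lem:gammaBound} (which handles both signs, via the reflection formula in the negative-real-part case) one has $\log|\Gamma(\langle\D_{i'},\sigma_m\rangle+\alpha_{i'})| = \langle \D_{i'},\sigma_m\rangle\log|\langle \D_{i'},\sigma_m\rangle| - \langle \D_{i'},\sigma_m\rangle + O(\log N)$, uniformly away from the polar strips; for $\alpha_i$ generic these strips can be avoided.

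The key cancellation comes from the Calabi--Yau hypothesis $\sum_{i=1}^{n+\boldsymbol{\kappa}}\D_i = 0$: pairing with $\D_j^{*,I}$ for $j \in I$ yields $\sum_{i' \in I'}\langle \D_{i'},\D_j^{*,I}\rangle = -1$. Combined with Stirling applied to the factorials $m_i!$, this identity makes both the $N\log N$ contributions and the linear-in-$N$ contributions from the $\Gamma$'s, factorials, and from the overall scale $\log N$ cancel exactly, leaving a finite, degree-one-homogeneous limit
\begin{equation*}
\lim_{N\to\infty} \tfrac{1}{N}\log|T_m| = \Phi(\sigma(\nu)) - \langle \zeta,\sigma(\nu)\rangle,
\end{equation*}
where $\sigma(\nu) := \sum_i \nu_i \D_i^{*,I} \in \angle^*_I$ and $\Phi$ is the coordinate-free function coming from the $\nu_i \log \nu_i$ and $y_{i'}\log|y_{i'}|$ terms. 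The Horn vector $\Psi(\sigma) = (\Psi_1,\ldots,\Psi_{\boldsymbol{\kappa}})$ is then defined so that $\Phi(\sigma) = \sum_i \sigma_i \log\Psi_i(\sigma)$, making the Cauchy--Hadamard convergence condition read $\sum_i (\langle \zeta, \D_i^{*,I}\rangle + \log\Psi_i(\sigma))\sigma_i > 0$ for every nonzero $\sigma \in \angle^*_I$, which is exactly the description of $U_I$ in part~(2).

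Part~(1) follows from this description: non-emptiness holds because for $\zeta$ chosen deep enough in the interior of the full-dimensional cone $\angle_I$, the linear term $\langle \zeta,\sigma\rangle$ dominates the bounded quantity $\Phi(\sigma)$ uniformly on the unit sphere of $\angle^*_I$; and the translate $\zeta_0 - \angle_I$ lies in $U_I$ whenever $\zeta_0$ does by monotonicity of $\langle \cdot,\sigma\rangle$ on $\angle^*_I$ (the defining inequalities only improve). The fact that the full $\theta$-domain is $U_I + \sqrt{-1}\bR^{\boldsymbol{\kappa}}$ uses that $|\exp(\langle \theta,\sigma_m\rangle)| = \exp(\langle \zeta,\sigma_m\rangle)$ depends only on the real part of $\theta$, while the imaginary $B$-direction contributes only oscillating phases.

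The main obstacle will be making the Stirling expansion uniform in $\nu$ across the boundary strata of the simplex, where some $\nu_i$ degenerate and the leading Stirling formula breaks down; the remedy is to use the sharper uniform bounds $|\Gamma(z)| \leq \mathrm{const}\cdot|z|^{x-1/2}e^{-\min(x,0)}$ from Lemma~\ref{lem:gammaBound} on $S_\delta$ (plus the imaginary-direction bound~\eqref{eq:imAs}) and to verify that the $O(\log N)$ remainder terms do not accumulate after summation, so that the Cauchy--Hadamard limit genuinely controls absolute convergence of the series rather than only its radius of convergence in each coordinate direction.
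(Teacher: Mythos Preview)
Your approach is essentially the paper's: both bound the summands via the Stirling-type estimates of Lemma~\ref{lem:gammaBound}, encode the leading asymptotics through the Horn vector, and invoke the multivariate Cauchy--Hadamard criterion; your simplex parametrization $m=N\nu$ and your explicit appeal to the Calabi--Yau identity $\sum_i\D_i=0$ (to kill the $N\log N$ and linear-$N$ terms) just make visible what the paper leaves implicit, since the appendix assumes CY throughout.

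One small caution on the translate claim in part~(1): your monotonicity step has the sign reversed. For $\sigma\in\angle^*_I$ and $a\in\angle_I$ one has $\langle a,\sigma\rangle\ge 0$, so passing from $\zeta_0$ to $\zeta_0-a$ \emph{decreases} $\langle\zeta,\sigma\rangle$ and makes the defining inequality $\langle\zeta,\sigma\rangle+\langle\log\Psi(\sigma),\sigma\rangle>0$ harder, not easier. The paper's own proof in fact only establishes non-emptiness and does not argue the translate assertion as written, so this is a point where both the statement and your argument deserve a second look rather than a place where you diverge from the paper.
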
 
\begin{proof}
  In this proof we work in the basis $\{\D^{*,I}_{i}\}_{i \in I}$ on $\fg$ and the corresponding coordinate system on $\fg$ and $\fg^{\vee}$. That is
  if $f \in \fg$ and $f^{\vee} \in \fg^{\vee}$, we write $f = \sum_{i} f_{i} \D^{*,I}_{i}, \; f^{*} = \sum_{i} f^{*}_{i_{1}} \D_{i}$, where
  $f_{i} := \langle \D_{i}, f \rangle$ and $f^{*}_{i} := \langle f^{*}, \D^{*I}_{i} \rangle$.

  Let $s^{I}_{i' i} := \langle \D_{i'}, \D^{*I}_{i} \rangle$. Consider the asymptotics of the series
  \eqref{eq:conv2}. We write the series as
  \begin{equation} \label{eq:conv3}
          Z_{I} = \sum_{m \in (\bZ_{\ge 0})^{{\boldsymbol{\kappa}}}}
          \prod_{i' \in I'} \Gamma(-\sum_{i} s^I_{i' i}(m_{i}+\alpha_{i}) + \alpha_{i'})
          \prod_{i \in I} \frac{(-1)^{m_{i}}}{m_{i}!} \exp(\langle \theta,
          \sigma_{m} \rangle) = \exp(-\sum_{i}  \theta_{i} \alpha_{i})  \sum_{m \in (\bZ_{\ge0})^{{\boldsymbol{\kappa}}}} c_{m} z^{m},
  \end{equation}
  where
  $z^{m} = \prod_{i=1}^{{\boldsymbol{\kappa}}} z_{i}^{m_{i}} =
  \prod_{i} \exp(-m_{i}\theta_{i})$.
  For generic $\alpha$ arguments of the gamma functions are uniformly separated from $\bZ_{\le 0}$ by a number $\delta>0$ because $s^I_{i'i}$ are
  rational numbers.

  Therefore we can apply Lemma~\ref{lem:gammaBound} and write the upper bound on the series expansion coefficients:
  \begin{equation}
    \begin{aligned}
      |c_{m}| \le const \cdot \frac{\prod_{i' \in I'} (s^{I}_{i'}m+c_{i'})^{(s^{I}_{i'}m+c_{i'}-1/2)}}{\prod_{i\in I} m_{i}^{m_{i}-1/2}}, \\ \;\;\; \text{ where }
      s^{I}_{i'}m = \sum_{i\in I} s^{I}_{i'i}m_{i} \text{ and } c_{i'} = \sum_{i\in I}s^{I}_{i'i}\alpha_{i}+\alpha_{i'}.
    \end{aligned}
  \end{equation}

  Thus we can write
  \begin{multline} \label{eq:CH1}
    \lim_{N \to \infty} \sup_{m, |m| \ge N} (c_{m} r^{m})^{1/|m|} \le \\ \le \lim_{N \to \infty} \sup_{m,|m|\ge N} 
    \left(const\prod_{i'\in I'}(s_{i'}^I m+c_{i'})^{c_{i'}-1/2} \prod_{i}m_{i}^{-1/2}\cdot
      \exp\left(- \langle \zeta, m \rangle - \sum_{i'\in I'}s^{I}_{i'}m \log(s^{I}_{i'}m + c_{i'}) - \sum_{i} m_{i}\log    m_{i}\right)\right)^{1/|m|},
  \end{multline}
  where $|m|$ denotes any norm on $\bR^{n}$.

    We define the Horn vector $\Psi(\sigma) := \sum_{i} \Psi_{i}(\sigma) \D_{i}$, where
  \begin{equation}
    \Psi_{i}(\sigma) := \frac{\sigma_{i}}{\prod_{i' \in I'}
    (s^{I}_{i'}\sigma)^{s^{I}_{i'i}}}.
  \end{equation}
  In particular, $\log \Psi(\sigma) = \sum_{i} \log \Psi_{i}(\sigma) \D_{i}$, where $\log \Psi_{i}(\sigma) = \log(\sigma_{i})-\sum_{i'\in I'} s^{I}_{i'i} \log(s^{I}_{i'}\sigma)$. We note that even though $\log \Psi_{i}(\sigma)$ is not defined if any of $\sigma_{i}, s_{i'}^I\sigma
   = 0$, the sum $\sum_{i} \sigma_{i} \log \Psi_{i}(\sigma) = \langle \log \Psi(\sigma), \sigma \rangle$ can be defined as a limit.

  Under the assumption of the proposition $\zeta = \Re(\theta)$ satisfies the equation
  \begin{equation} \label{eq:radius2}
    \langle \zeta + \log \Psi(\sigma), \sigma \rangle > 0
  \end{equation}
  for all $\sigma \in \bR^{{\boldsymbol{\kappa}}} \backslash \{0\}$,
  so we have
  \begin{multline}
    \label{eq:exponential}
-\langle \zeta, m \rangle - \sum_{i'\in I'}s^{I}_{i'}m \log(s^{I}_{i'}m + c_{i'}) - \sum_{i\in I} m_{i}\log m_{i} \le \\ \le
    -\langle \zeta, m \rangle - \sum_{i'\in I'}s^{I}_{i'}m \log(s^{I}_{i'}m + c_{i'}) - \sum_{i\in I} m_{i}\log m_{i} + \langle \zeta + \log \Psi(m), m\rangle = -\sum_{i'\in I'} s_{i'}^{I}m \log(1+c_{i'}/(s_{i'}^{I}m)),
  \end{multline}
  where by slight abuse of notation we identify $m = \sum_{i\in I} m_{i} \D^{*,I}_{i}$. Consider the function $x \log(1+c/x)$. As $x \to 0$ we have
  $x \log(1+c/x) \sim x \log(c/x) \to 0$, and when $x \to \infty$ then $\log(1+c/x) = c/x + O(1/x^{2})$, so $x \log(1+c/x) \to c$.
  Therefore the last expression in~\eqref{eq:exponential} is bounded from above and below, so we can write
  \begin{equation} \label{eq:CH2}
     \lim_{N \to \infty} \sup_{m, |m| \ge N} (c_{m} r^{m})^{1/|m|} \le \lim_{N \to \infty} \mathrm{sup}_{m, |m| \ge N} \exp(const \prod_{i'\in I'}(s^I_{i'}m+c_{i'})^{c_{i'}-1/2} \prod_{i\in I}m_{i}^{-1/2})^{1/|m|} \le 1.
  \end{equation}
  so the series~\eqref{eq:conv2} converges by the multivariate Cauchy-Hadamard theorem. If for some $\sigma$ the inequality~\eqref{eq:radius2}
  is not satisfied, then it is not satisfied for all $m$ proportional to $\sigma$ and the limit in~\eqref{eq:CH2} is greater than 1. This proves the second claim of the proposition.

  $\prod_{i} \Psi_{i}(\sigma)^{\sigma_{i}}$ is bounded and separated from $0$ on the unit sphere, so $\sum_{i} \log \Psi_{i}(\sigma)$ is bounded on the
  same domain below by a constant $-N$. First claim of the proposition is proved by choosing $\zeta = N\sum_{i} \D^{*,I}_{i}$.

\end{proof}

\begin{notation}
  Below we work with estimates including many constants whose exact values are of no importance and can be rather cumbersome. Notations $const, const_{i}$ denote various such constants.
\end{notation}

Below we present the proof of Lemma~\ref{lem:convImaginary} and Corollary~\ref{cor:contDef}.

\begin{proof}[\bf Proof of Lemma~\ref{lem:convImaginary}]
   
 We use the formula~\eqref{eq:imAs} from Lemma~\ref{lem:gammaBound} to estimate the integrand:
  \begin{equation} \label{eq:imAs1}
    |\prod_{i=1}^{n+{\boldsymbol{\kappa}}} \Gamma(\langle \D_{i}, \sigma \rangle +\alpha_i) e^{\langle \theta, \sigma \rangle}| < const \cdot \prod_{i \notin I_{k}}
    \left( |\langle \D_{i}, \Im(\sigma) \rangle|+1\right)^{\langle \D_{i}, \Re(\sigma) \rangle + \alpha_{i}} \exp\left( -2\pi \langle B, \Im(\sigma) \rangle - \pi/2 \sum_{i \notin I_{k}}\langle \D_{i}, \Im(\sigma) \rangle \right),
  \end{equation}
  for $\sigma$ on the integration contour. 

  We notice that condition~\eqref{eq:convCondition1} implies that
  $$ |\langle B, \nu \rangle| - \frac14 \sum_{i=1}^{n+{\boldsymbol{\kappa}}}\langle \D_{i}, \nu \rangle < -const \cdot |\nu|,$$
for some positive $const$ since the expression is homogeneous of degree 1 in $|\nu|$.

  Therefore, expression in the exponential in~\eqref{eq:imAs1} is bounded above by  $-c |\Im(\sigma)|$ for some constant $c>0$ (in any norm on $\fg$) under the assumption of the lemma, so the
  absolute value of the integrand is bounded by an exponentially decaying function.
\end{proof}

\begin{Proposition} \label{prop:integralBound}
Let $k < {\boldsymbol{\kappa}}, I^{\mathrm{or}}_{k} = (i_{1}, \ldots, i_{k})$ be such that $\{\D_{i}\}_{i \in I_{k}}$ are linearly independent.
Consider a cycle $\cC^{I^{\mathrm{or}}}_{\Omega}(p)$ where $p \in \pi^{I_{k}}_{m_{I_{k}}}$ and is separated from all other polar hyperplanes by positive number $\delta$. Let $B$ be such that the integral
\begin{equation} \label{eq:integral1}
  \mathrm{Int}(p) := \int_{\cC^{I^{\mathrm{or}}_{I_{k}}}_{\Omega}(p)} \dd \sigma \, \Gamma \cdot e^{\langle \theta, \sigma \rangle}
\end{equation}
converges absolutely. There exist constants $c_{1}, c_{2} > 0$ such that if $\langle \zeta, p/|p|
\rangle < -c_{1}$ then $|\mathrm{Int}(p)| < e^{-c_{2}|p|}, \; |p| \gg 0$.
\end{Proposition}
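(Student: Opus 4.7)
The strategy is to extract the exponential decay $e^{\langle \zeta, p\rangle}$ that comes from translating the integration contour by $p$, and then to show that everything else grows at most polynomially in $|p|$. The Calabi--Yau condition $\sum_{i=1}^{n+{\boldsymbol{\kappa}}}\D_i=0$ is crucial for the polynomial bound.

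First I would perform the change of variables $\sigma = p+\tau$, so that
\begin{equation*}
\mathrm{Int}(p) = e^{\langle \theta, p\rangle}\int_{\cC_0} \Gamma(p+\tau)\, e^{\langle \theta,\tau\rangle}\, d\tau,
\end{equation*}
where $\cC_0 := \cC^{I^{\mathrm{or}}_k}_0(0) + \sqrt{-1}(\fg_{I_k})_{\bR}$ is a fixed (a $p$-independent) cycle. Since $p$ may be taken to lie in the real part of $\pi^{I_k}_{m_{I_k}}$, we have $|e^{\langle \theta,p\rangle}| = e^{\langle \zeta, p\rangle} \le e^{-c_1 |p|}$ by hypothesis, so it suffices to bound $F(p) := \int_{\cC_0}|\Gamma(p+\tau)|\,|e^{\langle\theta,\tau\rangle}|\,|d\tau|$ polynomially in $|p|$.

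The heart of the argument is estimating $\prod_i |\Gamma(\langle \D_i,p+\tau\rangle + \alpha_i)|$ on $\cC_0$. Writing $\tau = \tau_0 + \sqrt{-1}\nu$ with $\tau_0$ ranging over a bounded torus (in the $\fh_{I_k}$-direction) and $\nu \in (\fg_{I_k})_{\bR}$, one splits the factors: for $i\in I_k$ the argument $\langle\D_i,p+\tau\rangle+\alpha_i$ stays in a small annulus around $-m_i$ uniformly in $p$ (since $\langle \D_i,\nu\rangle=0$ for $i\in I_k$), so these factors contribute a bounded constant. For $i\notin I_k$, Lemma \ref{lem:gammaBound} gives
\begin{equation*}
|\Gamma(\langle\D_i,p+\tau\rangle+\alpha_i)| \le C\, |z_i|^{\langle\D_i,p\rangle+O(1)} e^{-\Re\langle\D_i,p\rangle\text{-part}} e^{-(\pi/2)|\langle\D_i,\nu\rangle|+O(1)},
\end{equation*}
where $z_i := \langle\D_i,p+\tau\rangle+\alpha_i$. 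Using the Stirling form $|\Gamma(z)|\sim |z|^{\Re z-1/2}e^{-\Re z}e^{-(\pi/2)|\Im z|}$ and the Calabi--Yau relation $\sum_i\D_i=0$, the product $\prod_{i} e^{-\Re\langle\D_i,p+\tau\rangle}$ is a constant (independent of $p$ and $\tau$); this is the key cancellation. The remaining polynomial $\prod_{i\notin I_k}|z_i|^{\langle\D_i,p\rangle+O(1)}$ satisfies $|z_i|\le |p|+|\nu|+O(1)$, and since $\sum_{i\notin I_k}\langle\D_i,p\rangle = -\sum_{i\in I_k}\langle\D_i,p\rangle = O(1)$, a standard elementary estimate shows that the total product is bounded by $(|p|+|\nu|+1)^N$ for some fixed $N$ depending only on the $\D_i$ and $\alpha_i$.

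Finally, the factors $\prod_{i\notin I_k} e^{-(\pi/2)|\langle\D_i,\nu\rangle|}$ combined with $|e^{\langle\theta,\tau\rangle}| = e^{O(1) - 2\pi\langle B,\nu\rangle}$ provide exponential decay in $\nu$ by the proof of Lemma~\ref{lem:convImaginary} (the absolute-convergence hypothesis ensures exactly the condition needed for this uniform decay), so $\int_{(\fg_{I_k})_{\bR}} (|p|+|\nu|+1)^N e^{-c|\nu|}\,d\nu = O(|p|^N)$. Assembling, $F(p) \le C|p|^N$ and
\begin{equation*}
|\mathrm{Int}(p)| \le e^{-c_1|p|}\cdot C|p|^N \le e^{-c_2|p|}\quad\text{for }|p|\gg 0,
\end{equation*}
for any $c_2<c_1$. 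The main obstacle is the bookkeeping in the third paragraph: one must carefully verify that the polynomial exponents combine correctly via the CY relation so that no term $e^{+c|p|}$ sneaks in from the Stirling factors with negative real part; once the $e^{-\Re z}$ factors are cancelled by CY the remaining estimates are routine.
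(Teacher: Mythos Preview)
Your argument has a genuine gap in the third paragraph. The claim that
\[
\prod_{i\notin I_k}|z_i|^{\langle \D_i,p\rangle+O(1)}\ \le\ (|p|+|\nu|+1)^N
\]
for a \emph{fixed} $N$ is false. Your justification ``$\sum_{i\notin I_k}\langle\D_i,p\rangle=O(1)$, hence the product is $\le(\max|z_i|)^{O(1)}$'' only works when all exponents are nonnegative; here the individual $\langle\D_i,p\rangle$ are of order $\pm|p|$ with mixed signs, and the inequality $\prod a_i^{b_i}\le(\max a_i)^{\sum b_i}$ reverses for the factors with $b_i<0$. Concretely, writing $p_i=\langle\D_i,p\rangle$ and taking $\nu=0$, the product $\prod_{i\notin I_k}|p_i|^{p_i}$ is generically of size $e^{g(p/|p|)\,|p|}$ with $g(q)=\sum q_i\log|q_i|$ continuous but \emph{not identically zero} on the unit sphere (e.g.\ for $\D$'s proportional to $1,1,-2$ one gets $g\propto\pm\log 2$). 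So the remainder $F(p)$ is not polynomial in $|p|$; it is $e^{O(|p|)}$, and this constant has to be absorbed into the choice of $c_1$, which is exactly why the proposition only asserts \emph{existence} of $c_1$ rather than ``for every $c_1>0$''.

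There is a second, related loss: for $i\in I_k$ the factor is not ``a bounded constant'' but $\prod_{i\in I_k}(m_i!)^{-1}\sim\prod_{i\in I_k}|p_i|^{p_i}$ (Stirling), which is precisely the decay needed to complete the $i\notin I_k$ product to the full $\prod_{i=1}^{n+\boldsymbol{\kappa}}|p_i|^{p_i}$. This full product is degree-$0$ homogeneous by the CY relation, so $\sum_i(p_i/|p|)\log(|p_i|/|p|)$ is bounded on the sphere; that is the correct use of CY here, not the cancellation of the $e^{-\Re z_i}$ factors alone. Finally, because the true exponent of $|z_i|$ in the Stirling bound is of order $|p|$ (not a fixed $N$), the $\nu$-integration cannot be handled by $\int(|p|+|\nu|)^Ne^{-c|\nu|}\,d\nu=O(|p|^N)$; one needs $\int_0^\infty\rho^{P}e^{-c\rho}\,d\rho=c^{-P-1}\Gamma(P+1)$ with $P\sim|p|$ together with the elementary inequality $(\sum a_j)^{\sum a_j}\le k^{\sum a_j}\prod a_j^{a_j}$, as in the paper, to show this contributes only another $e^{\mathrm{const}\,|p|}$.
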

\begin{proof}
    Let us fix $I$ and work in the basis given by $\{\D_{i}\}_{i \in I}$.
    Using Fubini theorem we write
    \begin{multline}
      \mathrm{Int}(p) = \pm \int_{\bR^{{\boldsymbol{\kappa}}-k}} \int_{(S^{1})^{k}}\dd \sigma \, \prod_{i=1}^{n+{\boldsymbol{\kappa}}}\Gamma \cdot e^{\langle
      \theta, \sigma \rangle} = \\ = const \prod_{i \in I_{k}}\frac{(-1)^{m_{i}}}{m_{i}!} \int_{\bR^{{\boldsymbol{\kappa}}-k}} \Omega \, \prod_{i \notin I_{k}}
      \Gamma(\langle \D_{i}, p + \sqrt{-1}\Im(\sigma) \rangle  + \alpha_{i})  e^{\langle
      \theta, \sigma \rangle}.
  \end{multline}

  Let $p_{i} := \langle \D_{i}, p\rangle$ and $y_{i} := \Im(\langle \D_{i}, \sigma \rangle)$, $|y| := |\Im(\sigma)|$.
  Consider the main asymptotics of the Gamma functions without argument shift by $\alpha$:
  \begin{equation} \label{eq:mainAsympt}
    \prod_{i=1}^{n+\boldsymbol{\kappa} }|p_{i}|^{-p_{i}} := \mathrm{exp}\left(-\sum_{i \notin I_{k}} p_{i}\log |p_{i}| + \sum_{i \in I_{k}} (m_{i}+\alpha_{i}) \log (m_{i}+\alpha_{i})\right),
  \end{equation}
  where the left hand side is defined as a limit if any $p_{i} = 0$.
  Calabi-Yau condition $\sum_{i=1}^{n+\boldsymbol{\kappa}} \D_{i} = 0$ implies that~\eqref{eq:mainAsympt} is a homogeneous function of degree 0 in $p$.
  We use it to simplify the asymptotics of the integrand:
  \begin{multline} \label{eq:intHornAsymptotics}
    \prod_{i \in I_{k}} m_{i}^{-m_{i}} \prod_{i \notin I_{k}}|p_{i} + \alpha_{i} + \sqrt{-1}y_{i}|^{p_{i}} \prod_{i} |p_{i}|^{-p_{i}} = \\ =
    \prod_{i \in I_{k}}(m_{i}+\alpha_{i})^{\alpha_{i}}\mathrm{exp}\left( \sum_{i \notin I_{k}} p_{i} \log |1 + (\alpha_{i} + \sqrt{-1} y_{i})/p_{i}| +\sum_{i \in I_{k}} m_{i}\log(1+\alpha_{i}/m_{k})\right) \le \\
    \le e^{const \cdot |p|}\mathrm{exp}\left( \sum_{i \notin I_{k}} p_{i} \log |1 + (\alpha_{i} + \sqrt{-1} y_{i})/p_{i}| \right), \; |p| \to \infty.
  \end{multline}
  In the last inequality we used the fact that $(m_{i}+\alpha_{i})^{\alpha_{i}}$ is bounded by a polynomial in $|p|$ and $m_{i} \log(1+\alpha_{i}/m_{i}) < \alpha_{i}$.

  Analogously to the proof of Lemma~\ref{lem:convImaginary} we have
  \begin{equation} \label{eq:imaginaryAsympt1}
    e^{-2\pi\langle B, \Im(\sigma) \rangle - \sum_{i \notin I_{k}}\pi |y_{i}|/2} \le e^{-const_{2}|y|}.
  \end{equation} 
  Now we apply Lemma~\ref{lem:gammaBound} to~\eqref{eq:integral1}: 
    \begin{multline}
    \label{eq:intEstimate1}
    \prod |p_{i}|^{-p_{i}} |\mathrm{Int}(p)| \le \prod_{i} |p_{i}|^{-p_{i}} \prod_{i \in I_{k}} m_{i}^{-m_{i}}e^{-\sum_{i \in I_{k}} m_{i}} e^{\langle \zeta, p \rangle} \times \\ \times \int_{\bR^{{\boldsymbol{\kappa}}-k}} \Omega \;
    \prod_{i \notin I_{k}}|p_{i} + \alpha_{i} + \sqrt{-1}y_{i}|^{p_{i}+\alpha_{i}-1/2} e^{-\sum_{i \notin I_{k}}\min(p_{i}+\alpha_{i},0)}
    e^{-2\pi\langle B, \Im(\sigma) \rangle-\sum_{i \notin I_{k}} \pi |y_{i}|/2} \le \\ \le e^{const_{4} |p|  + \langle \zeta, p\rangle} e^{\sum_{i} \alpha_{i}} \int_{\bR^{{\boldsymbol{\kappa}}-k}} \Omega \;  \prod_{i \notin I_{k}}|p_{i} + \alpha_{i} + \sqrt{-1}y_{i}|^{\alpha_{i}-1/2}\mathrm{exp}\left( \sum_{i \notin I_{k}} p_{i} \log |1 + (\alpha_{i} + \sqrt{-1} y_{i})/p_{i}| \right) e^{-const_{2}|y|},
  \end{multline}
  where in the last line we used~\eqref{eq:intHornAsymptotics} and~\eqref{eq:imaginaryAsympt1}.

  We use the obvious inequality $(a+b)^{x} \le (2a)^{x}+(2b)^{x}$ for $a,b > 0, \; x \in \bR \backslash \{0\}$ to write $|a+\sqrt{-1}b|^{x} = (a^{2}+b^{2})^{x/2} \le
  2^{x/2}(|a|^x + |b|^{x})$. Furthermore, if $p_{i} < 0$,
  \begin{equation}
    |1 + (\alpha_{i} + \sqrt{-1} y_{i})/p_{i}|^{p_{i}} < |1+\alpha_{i}/p_{i}|^{p_{i}}.
  \end{equation}
  This estimate does not depend on $y$ and is subexponential in $p$ if $p_{i}$ is
  separated from $-\alpha_{i}$, so we can ignore these terms in the estimate.
  Then
\begin{multline} \label{eq:intAsympt2}
    \prod_{i \notin I_{k}, \alpha_{i}-1/2 > 0}|p_{i} + \alpha_{i} + \sqrt{-1}y_{i}|^{\alpha_{i}-1/2}\mathrm{exp}\left( \sum_{i \notin I_{k}, p_{i} >0} p_{i} \log |1 + (\alpha_{i} + \sqrt{-1} y_{i})/p_{i}| \right) < \\
    < 2^{\sum_{i \notin I_{k}} (p_{i}+\alpha_{i}-1/2)/2}\prod_{i \notin I_{k}, \alpha_{i}-1/2>0} \left(|p_{i}+\alpha_{i}|^{\alpha_{i}-1/2}+ |y_{i}|^{\alpha_{i}-1/2}\right) \prod_{i \notin I_{k}, p_{i} > 0}((1+|\alpha_{i}/p_{i}|)^{p_{i}} +
    |y_{i}|^{p_{i}}/|p_{i}|^{p_{i}}) \le \\
    \le 2^{\sum_{i \notin I_{k}} (p_{i}+\alpha_{i}-1/2)/2}\prod_{i \notin I_{k}, \alpha_{i}-1/2>0} \left(|p_{i}+\alpha_{i}|^{\alpha_{i}-1/2}+ |y_{i}|^{\alpha_{i}-1/2}\right) \prod_{i \notin I_{k}, p_{i} > 0}(e^{|\alpha_{i}|} +
    |y_{i}|^{p_{i}}/|p_{i}|^{p_{i}}),
  \end{multline}

  where in the last inequality we use $(1+c/x)^{x} = e^{x\log(1+c/x)} < e^{c}, \; c,x > 0$. The first product in the last line has polynomial behaviour
  in $p_{i}$ so it is bounded by a polynomial in $|p|$. Also, factors $2^{p_{i}}$ are bounded by $e^{const |p|}$. Let us focus on the
  only non-trivial terms containing $|y_{i}|^{p_{i}}/p_{i}^{p_{i}}$.
  We drop powers of $|y_{i}|^{\alpha_{i}-1/2}$ since they will produce the same exponential
  asymptotics.
  In order to prove the claim of the proposition we prove that each integral
  \begin{equation}
    \label{eq:intAsympt3}
    \int_{\bR^{{\boldsymbol{\kappa}}-k}} \Omega \;  \prod_{i \in J \subset I'_{k}} |y_{i}|^{p_{i}}/p_{i}^{p_{i}} e^{-const_{2}|y|}
  \end{equation}
  is bounded above by $e^{const |p|}$. Let $\rho = |y|$.   Then in the polar coordinates on $\bR^{{\boldsymbol{\kappa}}-k}$ we write
  \begin{multline}
    \label{eq:intAsympt4}
    \int_{\bR^{{\boldsymbol{\kappa}}-k}} \Omega \;  \prod_{i \in J \subset I'_k } |y|^{p_{i}}/p_{i}^{p_{i}} e^{-const_{2}|y|} = const
    \prod_{i} p_{i}^{-p_{i}}\int_{0}^{\infty} \dd \rho \; \rho^{{\boldsymbol{\kappa}}-k-1 + \sum_{i} p_{i}} e^{-const_{2} \rho} = \\
    = const \cdot const_{2}^{k-{\boldsymbol{\kappa}}-\sum_{i} p_{i}} \prod_{i} p_{i}^{-p_{i}} \Gamma(\sum_{i} p_{i} + {\boldsymbol{\kappa}}-k).
  \end{multline}   If $\sum_{i} p_{i}$ is small, then so is the integral. Otherwise we can use the Stirling approximation.
   $\Gamma(\sum_{i} p_{i}+ {\boldsymbol{\kappa}}-k)/\Gamma(\sum_ip_{i})$ is bounded by a polynomial in $|p|$, so we need to prove that
  $$ \Gamma(\sum_{i} p_{i}) \prod_{i}p_{i}^{-p_{i}} < e^{const |p|}. $$ 
  Stirling approximation implies:
  \begin{equation}
    \Gamma(\sum_{i} p_{i}) < const (\sum_{i} p_{i})^{\sum_{i} p_{i}-1/2}e^{-\sum_{i} p_{i}}.
  \end{equation}

  \begin{lemma}\label{lemma-a} 
      Let $a_{1}, \ldots, a_{k} > 0$ and $a = \sum_{i=1}^k a_{i} >0$. Then
    \begin{equation} \label{eq:leqka}
    a^{a}\prod_{i} a_{i}^{-a_{i}} \leq k^a
    \end{equation}
  \end{lemma}
  \begin{proof} Equation \eqref{eq:leqka} is equivalent to
  \begin{equation}\label{eq:leqk}
  \prod_{i=1}^k \left(\frac{a_i}{a}\right)^{-\frac{a_i}{a}} \leq k
    \end{equation}
 Define $f: [0,\infty)^k \to \bR$ by
 $$
 f(b_1,\ldots,b_k) = \begin{cases}
 \prod_{i=1}^k b_i^{-b_i} & \text{if } b_1,\ldots, b_k>0,\\
 0 & \text{if } b_i=0 \text{ for some }i.
 \end{cases}
 $$
 Then $f$ is continuous, and is smooth on $(0,\infty)^k$.  Let 
 $\Delta_k:=\{ (b_1,\ldots,b_k)\in\bR: b_i\geq 0, \sum_{i=1}^k b_i  =1\}$   
 be the $(k-1)$-simplex. Then $f$ is positive in the interior of $\Delta_k$ and is zero on  $\partial\Delta_k$. Using Lagrange multiplier we compute that
 $$
 \max_{(b_1,\ldots, b_k)\in \Delta_k} f(b_1,\ldots,b_k) = f(\frac{1}{k},\ldots, \frac{1}{k}) = k.
 $$ 
 Therefore, \eqref{eq:leqk} holds whenever $a_1,\ldots, a_k>0$ and $a=\sum_{i=1}^k a_i$. 
  \end{proof}

  Applying Lemma \ref{lemma-a} we find that
  \begin{equation}
    \Gamma(\sum_{i} p_{i}) \prod_{i} p_{i}^{-p_{i}} \le e^{const (\sum_{i} p_{i})} \le e^{const |p|}.
  \end{equation}
  Returning to~\eqref{eq:intEstimate1} we find
  \begin{equation}
    \prod_{i} |p_{i}|^{-p_{i}} |\mathrm{Int}(p)| \le e^{\langle \zeta, p \rangle} e^{const |p|}, \; |p| \to \infty,
  \end{equation}
  or
  \begin{equation}
    |\mathrm{Int}(p)| < e^{\langle \zeta, p \rangle + \sum_{i}p_{i}\log|p_{i}|+const|p|} = e^{(\langle \zeta, p/|p| \rangle  +\sum_{i} p_{i}/|p|\log |p_{i}|/|p|)+const)|p|},
  \end{equation}
  where we used that $\prod_{i}|p_{i}|^{-p_{i}}$ is homogeneous of degree 0.
  Therefore if $\langle \zeta , p/|p|\rangle + \sum_{i}p_{i}/|p|\log (|p_{i}|/|p|) < -const$ then the integral is bounded by $e^{-c_{2} |p|}$ for
  large $|p|$ and $c_{2}$ that does not depend on $p$. We note that $\sum_{i}p_{i}\log |p_{i}|$ is a continuous function, so it is bounded and the estimate is satisified if
  $\langle \zeta, p/|p| \rangle < -c_{1}$ for a real number $c_{1}$ that does not depend on $p$.

\end{proof}


\begin{thebibliography}{CFGKS}

\bibitem[AGV]{AGV}  D. Abramovich, T. Graber and A. Vistoli, 
``Gromov-Witten theory of Deligne-Mumford stacks," 
Amer. J. Math.  {\bf 130} (2008), no. 5, 1337--1398.

\bibitem[AV02]{AbVi} D. Abramovich and A. Vistoli, 
``Compactifying the space of stable maps,"  
J. Amer. Math. Soc. {\bf 15} (2002), no. 1, 27--75.

\bibitem[Al23]{Al23} K. Aleshkin, 
``Central charges of T-dual branes for GLSMs," in preparation. 

\bibitem[AB1]{AB1} K. Aleshkin and A. Belavin,
``Special geometry on the 101 dimensional moduli space of the quintic threefold," 
J. High Energy Phys. {\bf 2018}, no. 3, 018, front matter+13 pp.

\bibitem[AB2]{AB2} K. Aleshkin and A. Belavin,
``Exact computation of the Special geometry for Calabi-Yau hypersurfaces of Fermat type," 
JETP Letters, 2018, Vol. {\bf 108}, no. 10, 705--709.
 
\bibitem[AL1]{AL1} K. Aleshkin and C.-C. M. Liu, 
``Wall-crossing for $K$-theoretic quasimap invariants I," 
arXiv:2210.10315.
 
 \bibitem[AL2]{AL2} K. Aleshkin and C.-C. M. Liu,
``Wall-crossing for $K$-theoretic quasimap invariants II," 
in preparation.  


\bibitem[BC]{BeniniCremonesi}
F.~Benini and S.~Cremonesi,
``Partition Functions of ${\mathcal{N}=(2,2)}$ Gauge Theories on S$^{2}$ and Vortices,''
Commun. Math. Phys. \textbf{334}, no.3, 1483-1527 (2015)
doi:10.1007/s00220-014-2112-z
[arXiv:1206.2356 [hep-th]].

\bibitem[BFK]{BFK}  M. Ballard, D.  Favero, and L. Katzarkov, 
``Variation of geometric invariant theory quotients and derived categories," 
J. Reine Angew. Math. {\bf 746} (2019), 235--303. 


\bibitem[BP10]{BP10} V.  Baranovsky and J. Pecharich, 
``On equivalences of derived and singular categories,"
Cent. Eur. J. Math. {\bf 8} (2010), no. 1, 1--14.

 
\bibitem[BF97]{BF97} K.  Behrend and B. Fantechi, 
``The intrinsic normal cone," Invent. Math. {\bf 128} (1997), no. 1, 45--88.

\bibitem[BCS]{BCS}  L. Borisov,  L. Chen, and G. G. Smith, 
``The orbifold Chow ring of toric Deligne-Mumford stacks,"
J. Amer. Math. Soc. {\bf 18} (2005), no. 1, 193--215. 



\bibitem[BH1]{BH1} L.A. Borisov  and P. R. Horja,
``On the K-theory of smooth toric DM stacks," 
Snowbird lectures on string geometry, 21--42, 
Contemp. Math., {\bf 401}, Amer. Math. Soc., Providence, RI, 2006.

\bibitem[BH2]{BH2} L. A. Borisov  and P. R. Horja,
``Mellin-Barnes integrals as Fourier-Mukai transforms,"
Adv. Math. {\bf 207} (2006), no. 2, 876--927.

\bibitem[BH06]{BH06} L.A. Borisov and P. R. Horja,
``On the K-theory of smooth toric DM stacks," 
Snowbird lectures on string geometry, 21--42, 
Contemp. Math., {\bf 401}, Amer. Math. Soc., Providence, RI, 2006. 

\bibitem[BH15]{BH15} L.A. Borisov and P. R. Horja, 
``Applications of homological mirror symmetry to hypergeometric systems: duality conjectures,"
Adv. Math. {\bf 271} (2015), 153--187.

\bibitem[CL12]{CL12} H.-L. Chang and J. Li, 
``Gromov-Witten invariants of stable maps with fields," 
Int. Math. Res. Not. IMRN {\bf 2012}, no. 18, 4163--4217.

\bibitem[CLL]{CLL} H.-L. Chang, J. Li, W.-P. Li,
``Witten's top Chern class via cosection localization," 
Invent. Math. {\bf 200} (2015), no. 3, 1015--1063. 

\bibitem[CL20]{CL20}  H.-L. Chang and M.-L. Li,
``Invariants of stable quasimaps with fields," 
Trans. Amer. Math. Soc. {\bf 373} (2020), no. 5, 3669--3691.

\bibitem[CJR]{CJR} Q. Chen, F. Janda, and Y. Ruan,
``The logarithmic gauged linear sigma model,"
Invent. Math. {\bf 225} (2021), no. 3, 1077--1154. 

\bibitem[CT]{CT} J.-C. Chen and H.-T. Tseng,
``A note on derived McKay correspondence," 
Math. Res. Lett. {\bf 15} (2008), no. 3, 435--445. 

\bibitem[CCK]{CCK} D. Cheong, I.  Ciocan-Fontanine, and B. Kim,
``Orbifold quasimap theory,"
Math. Ann. {\bf 363} (2015), no. 3-4, 777--816. 


\bibitem[CKS]{CKS} Dongwook Choa, Bumsig Kim, Bhamidi Sreedhar,
``Riemann-Roch for stacky matrix factorizations,"
arXiv:2202.04418

\bibitem[CKK]{CKK} Kuerak Chung, Bumsig Kim, and Taejung Kim,
``A chain-level HKR-type map and a Chern character formula," 
arXiv:2109.14372. 

\bibitem[CKM]{CKM} I. Ciocan-Fontanine, B. Kim. and D. Maulik, 
``Stable quasimaps to GIT quotients," 
J. Geom. Phys. {\bf 75} (2014), 17--47. 
 
 \bibitem[CK]{CK}  I. Ciocan-Fontanine and B. Kim, Bumsig 
 ``Wall-crossing in genus zero quasimap theory and mirror maps," 
 Algebr. Geom. {\bf 1} (2014), no. 4, 400--448. 

\bibitem[CFGKS]{CFGKS} I. Ciocan-Fontanine, D. Favero, J. Gu\'{e}r\'{e}, B. Kim, and M. Shoemaker,
``Fundamental Factorization of a GLSM, Part I: Construction,"
arXiv:1802.05247, to appear in Memoirs of the American Mathematical Society. 

\bibitem[CIR]{CIR} A. Chiodo, H. Iritani, Y. Ruan,
``Landau-Ginzburg/Calabi-Yau correspondence, global mirror symmetry and Orlov equivalence,"
Publ. Math. Inst. Hautes \'{E}tudes Sci. {\bf 119} (2014), 127--216. 


\bibitem[CN]{CN} A. Chiodo and J. Nagel,
``The hybrid Landau-Ginzburg models of Calabi-Yau complete intersections,"
Topological recursion and its influence in analysis, geometry, and topology, 103--117, Proc. Sympos. Pure Math., {\bf 100}, Amer. Math. Soc., Providence, RI, 2018.

\bibitem[CR]{CR} A. Chiodo and Y. Ruan, 
``LG/CY correspondence: the state space isomorphism,"
Adv. Math. {\bf 227} (2011), no. 6, 2157--2188.


\bibitem[CCIT]{CCIT} T. Coates, A. Corti, H. Iritani, and H.-H. Tseng, 
``A mirror theorem for toric stacks," 
Compos. Math. {\bf 151} (2015), no. 10, 1878--1912. 

\bibitem[CIJ]{CIJ} T. Coates, H. Iritani, Y. Jiang, 
``The crepant transformation conjecture for toric complete intersections,"
Adv. Math. {\bf 329} (2018), 1002--1087.

\bibitem[CIJS]{CIJS} T. Coates, H. Iritani, Y. Jiang, and E. Segal,
``K-theoretic and categorical properties of toric Deligne-Mumford stacks,"
Pure Appl. Math. Q. {\bf 11} (2015), no. 2, 239--266.

\bibitem[CLL]{CLL}  H.-L. Chang, J. Li, and W.-P. Li,
``Witten's top Chern class via cosection localization," 
Invent. Math. {\bf 200} (2015), no. 3, 1015–1063. 

\bibitem[CLS]{CLS} D. A. Cox, J. B. Little, and H. K. Schenck, 
Toric varieties. Graduate Studies in Mathematics,  {\b124}. American Mathematical Society, Providence, RI, 2011. xxiv+841 pp. 

\bibitem[FK]{FK} D. Favero and B. Kim, 
``General GLSM Invariants and Their Cohomological Field Theories,"
arXiv:2006.12182.


\bibitem[FJR11]{FJR11} 
``The Witten equation, mirror symmetry, and quantum singularity theory,"
 Ann. of Math. (2)  {\bf 178} (2013), no. 1, 1--106. 

\bibitem[FJR]{FJR}  H. Fan, T. J. Jarvis, and Y. Ruan,
``A mathematical theory of the gauged linear sigma model," 
Geom. Topol. {\bf 22} (2018), no. 1, 235--303 (published version) and arXiv:1506.02109v5 (2020). 

\bibitem[Fa20]{Fa20} B. Fang, 
``Central charges of T-dual branes for toric varieties,"
Trans. Amer. Math. Soc. {\bf 373} (2020), no. 6, 3829--3851. 

\bibitem[FZ]{FZ} B. Fang and P. Zhou, 
``Gamma II for toric varieties from integrals on T-dual branes and homological mirror symmetry,"
arXiv:1903.05300. 

\bibitem[Fu93]{Fu93} W. Fulton,
``Introduction to toric varieties,"
Annals of Mathematics Studies, 131. The William H. Roever Lectures in Geometry. Princeton University Press, Princeton, NJ, 1993. xii+157 pp. 

\bibitem[Fu98]{Fu98} W. Fulton,
{\em Intersection theory}, Second edition. 
Springer-Verlag, Berlin, 1998. xiv+470 pp.

\bibitem[Gi96]{Gi96} A. Givental, 
``Equivariant Gromov-Witten invariants,"
Internat. Math. Res. Notices {\bf 1996}, no. 13, 613--663.

\bibitem[Gi98]{Gi98} A. Givental,
``A mirror theorem for toric complete intersection," 
Topological field theory, primitive forms and related topics (Kyoto, 1996), 141--175, 
Progr. Math., {\bf 160}, Birkh\"{a}user Boston, Boston, MA, 1998. 

\bibitem[GiV]{GiV} A. Givental,
``Permutation-equivariant quantum K-theory V. Toric $q$-hypergeometric functions,"
arXiv:1509.03903. 

\bibitem[GiVI]{GiVI} A. Givental,
``Permutation-equivariant quantum K-theory VI. Mirrors,"
arXiv:1509.07852.

\bibitem[GL]{GL} A. Givental, Y. Lee,
``Quantum K-theory on flag manifolds, finite-difference Toda lattices and quantum groups," 
Invent. Math. {\bf 151} (2003), no. 1, 193--219.

\bibitem[Gu]{Gu} J. Gu\'{e}r\'{e},
``Equivariant Landau-Ginzburg mirror symmetry,"
arXiv:1906.04100.


\bibitem[HL]{HL} D. Halpern-Leistner,
``The derived category of a GIT quotient," 
J. Amer. Math. Soc. {\bf 28} (2015), no. 3, 871--912.

\bibitem[HLS]{HLS} D. Halpern-Leistner and I.  Shipman,
``Autoequivalences of derived categories via geometric invariant theory,"
 Adv. Math. {\bf 303} (2016), 1264--1299.

\bibitem[HS]{HS} L. Heath and M. Shoemaker, 
``Quantum Serre duality for quasimaps,"
Eur. J. Math. {\bf 8} (2022), suppl. 1, S53--S93.

\bibitem[Ho]{Horn} J. Horn, ``Ueber die Convergenz der hypergeometrischen Reihen zweier und dreier Ver\"{a}nderlichen," (German)
 Math. Ann. {\bf 34} (1889), no. 4, 544--600. 
 
\bibitem[HHP]{HHP} M. Herbst, K. Hori, and D. C. Page, 
``Phases Of N=2 Theories In 1+1 Dimensions With Boundary,"
arXiv:0803.2045.

\bibitem[HR]{HR} K. Hori and M. Romo, 
``Exact Results In Two-Dimensional (2,2) Supersymmetric Gauge Theories With Boundary," 
arXiv:1308.2438.

\bibitem[HR19]{HR19} K. Hori and M.  Romo, 
``Notes on the hemisphere," 
Primitive forms and related subjects -- Kavli IPMU 2014, 127--220, Adv. Stud. Pure Math., {\bf 83}, Math. Soc. Japan, Tokyo, 2019. 

\bibitem[Ir09]{Iritani} H. Iritani, 
``An integral structure in quantum cohomology and mirror symmetry for toric orbifolds," Adv. Math. {\bf 222} (2009), 
no. 3, 1016--1079. 
 
 \bibitem[KL13]{KL13} Y.H. Kiem and J. Li, 
 ``Localizing virtual cycles by cosections," 
 J. Amer. Math. Soc. {\bf 26} (2013), no. 4, 1025--1050.
 
 \bibitem[KL18]{KL18} Y.-H. Kiem and J. Li, 
 ``Localizing virtual structure sheaves by cosections,"
 Int. Math. Res. Not. IMRN {\bf 2020}, no. 22, 8387--8417.
 
 \bibitem[KP22]{KP22} B. Kim and A. Polishchuk,
  ``Atiyah class and Chern character for global matrix factorisations,"
  J. Inst. Math. Jussieu {\bf 21} (2022), no. 4, 1445--1470. 
 
\bibitem[Ji08]{Ji08} Y. Jiang, 
``The orbifold cohomology ring of simplicial toric stack bundles," 
Illinois J. Math. {\bf 52} (2008), no. 2, 493--514
 
 \bibitem[JT08]{JT08} Y. Jiang and H.-H. Tseng,
 ``Note on orbifold Chow ring of semi-projective toric Deligne-Mumford stacks,"
 Comm. Anal. Geom. {\bf 16} (2008), no. 1, 231--250. 
 
 \bibitem[JT10]{JT10}   Y. Jiang and H.-H. Tseng,
 ``The integral (orbifold) Chow ring of toric Deligne-Mumford stacks," 
 Math. Z. {\bf 264} (2010), no. 1, 225--248.

\bibitem[Ka]{Ka} Y. Kawamata,
``Log crepant birational maps and derived categories," 
J. Math. Sci. Univ. Tokyo, {\bf 12} (2):211--231, 2005.

\bibitem[Ka79]{Ka79} T. Kawasaki, 
``The Riemann-Roch theorem for complex V-manifolds," 
Osaka J. Math. {\bf 16}  (1979), no. 1, 151--159.

\bibitem[KiLi]{KiLi} Y.-H. Kiem and J. Li, 
``Localizing virtual cycles by cosections," 
J. Amer. Math. Soc. {\bf 26} (2013), no. 4, 1025--1050.

\bibitem[Ki21]{Ki21} B. Kim, 
``Hirzebruch-Riemann-Roch for global matrix factorizations," 
arXiv:2106.00435. 

\bibitem[KRS]{KRS} J. Knapp, M. Romo, and E. Scheidegger, 
``D-brane central charge and Landau-Ginzburg orbifolds,"  
Comm. Math. Phys. {\bf 384} (2021), no. 1, 609--697.

\bibitem[Lee]{Lee} Y.-P. Lee, 
{\em Quantum K-theory I. Foundations},
Duke Math. J. {\bf 121} (2004), no. 3, 389--424. 


\bibitem[LPS]{LPS} Y.-P. Lee, N. Priddis, M. Shoemaker,
{\em A proof of the Landau-Ginzburg/Calabi-Yau correspondence via the crepant transformation conjecture},
Ann. Sci. \'{E}c. Norm. Sup\'{e}r. (4) {\bf 49} (2016), no. 6, 1403--1443.


\bibitem[LT98]{LT98} J. Li and G. Tian, 
``Virtual moduli cycles and Gromov-Witten invariants of algebraic varieties,"
J. Amer. Math. Soc. {\bf 11} (1998), no. 1, 119--174.

\bibitem[LLY]{LLY} B. Lian, K. Liu, and S.-T. Yau, 
``Mirror principle  I,"  Asian J. Math. {\bf 1} (1997), no. 4, 729--763.

\bibitem[MS]{MS} T. Milanov, Y. Shen,
``Global mirror symmetry for invertible simple elliptic singularities," 
Ann. Inst. Fourier (Grenoble) {\bf 66} (2016), no. 1, 271--330.

\bibitem[Ok20]{Ok20} A. Okounkov, 
``Nonabelian stable envelopes, vertex functions with descendents, and integral solutions of $q$-difference equations,"
arXiv:2010.13217.

\bibitem[Or04]{Or04} D. Orlov,
``Triangulated categories of singularities and D-branes in Landau-Ginzburg models," 
(Russian) Tr. Mat. Inst. Steklova {\bf 246} (2004), Algebr. Geom. Metody, Svyazi i Prilozh., 240--262; translation in Proc. Steklov Inst. Math. 2004, no. 3 (246), 227--248.

\bibitem[Or05]{Or09} D. Orlov, 
``Derived Categories of Coherent Sheaves and Triangulated Categories of Singularities,"
Algebra, arithmetic, and geometry: in honor of Yu. I. Manin. Vol. II, 503-531, Progr. Math., {\bf 270}, 
Birkh\"{a}user Boston, Inc., Boston, MA, 2009.

\bibitem[PS16]{PS16} Nathan Priddis and Mark Shoemaker,
``A Landau-Ginzburg/Calabi-Yau correspondence for the mirror quintic,"
Ann. Inst. Fourier (Grenoble) {\bf 66} (2016), no. 3, 1045--1091.

\bibitem[PV11]{PV11} A. Polishchuk and A. Vaintrob, 
``Matrix factorizations and singularity categories for stacks," 
Ann. Inst. Fourier (Grenoble) {\bf 61} (2011), no. 7, 2609--2642.

\bibitem[PV16]{PV16} A. Polishchuk and A. Vaintrob, 
``Matrix factorizations and cohomological field theories," 
J. Reine Angew. Math. {\bf 714} (2016), 1--122.

\bibitem[Ro]{Ro} M. Romagny,
``Group Actions on Stacks and Applications," 
Group actions on stacks and applications. Michigan Math. J. {\bf 53} (2005), no. 1, 209--236.

\bibitem[RR]{RR} D. Ross and Y. Ruan, 
``Wall-crossing in genus zero Landau-Ginzburg theory,"
J. Reine Angew. Math. {\bf 733} (2017), 183--201. 

\bibitem[Ru99]{Ru99} Y. Ruan, 
``Surgery, quantum cohomology and birational geometry,"  
Northern California Symplectic Geometry Seminar,
183--198, Amer. Math. Soc. Transl. Ser. 2, {\bf 196}, Adv. Math. Sci., 45, Amer. Math. Soc., Providence, RI, 1999.

\bibitem[Ru02]{Ru02} Y. Ruan,
``Stringy geometry and topology of orbifolds," 
Symposium in Honor of C. H. Clemens (Salt Lake City, UT, 2000), 187--233, Contemp. Math., {\bf 312}, Amer. Math. Soc., Providence, RI, 2002.

\bibitem[Ru06]{Ru06}  Y. Ruan,
``The cohomology ring of crepant resolutions of orbifolds," 
Gromov-Witten theory of spin curves and orbifolds,
117--126, Contemp. Math., {\bf 403}, Amer. Math. Soc., Providence, RI, 2006.

\bibitem[Sh21]{Sh21} M. Shoemaker,
``Narrow quantum D-modules and quantum Serre duality," 
 Ann. Inst. Fourier (Grenoble) {\bf 71} (2021), no. 3, 1135--1183.

\bibitem[Sh]{Sh} M. Shoemaker,
``Towards a mirror theorem for GLSMs,"
arXiv:2108.12360. 

\bibitem[TY]{TY} H. Tseng and F. You,
``Wall-Crossing in Genus Zero K-theoretic Landau-Ginzburg Theory," 
arXiv:1609.08176.

\bibitem[Ts]{TsikhBook} A. K. Tsikh Mnogomernye vychety i ih prilozheniya \color{black}  (Russian) [Higher-dimensional residues and their applications] ``Nauka'' Sibirsk. Otdel., Novosibirsk, 1988. 240 pp. \\
{\em Multidimensional residues and their applications}. Translated from the 1988 Russian original by E. J. F. Primrose. Translations of Mathematical Monographs, {\bf 103}.  American Mathematical Society, Providence, RI, 1992. x+188 pp. 

\bibitem[Vi89]{Vi89} A. Vistoli, 
``Intersection theory on algebraic stacks and on their moduli spaces," 
Invent. Math. {\bf 97} (1989), no. 3, 613--670. 

\bibitem[Zh22]{Zh22} Y. Zhou, 
``Quasimap wall-crossing for GIT quotients," 
Invent. Math. {\bf 227} (2022), no. 2, 581--660. 

\bibitem[ZZ]{ZZ} M. Zhang and Y. Zhou,
``K-theoretic quasimap wall-crossing," 
arXiv:2012.01401.

\end{thebibliography}
\end{document}